\documentclass[reqno]{amsart}

\theoremstyle{plain}
\newtheorem{theorem}{Theorem} [section]
\newtheorem{corollary}[theorem]{Corollary}
\newtheorem{lemma}[theorem]{Lemma}
\newtheorem{proposition}[theorem]{Proposition}

\theoremstyle{definition}
\newtheorem{definition}[theorem]{Definition}

\newtheorem{remark}[theorem]{Remark}

\newtheorem{example}[theorem]{Example}

\def\C{{\mathbb{C}}}
\def\N{{\mathbb{N}}}
\def\R{{\mathbb{R}}}
\def\Z{{\mathbb{Z}}}

\def\H{{\mathcal{H}}}
\def\F{{\mathcal{F}}}




\newcommand{\w}{\omega}

\newcommand{\f}{\alpha}
\newcommand{\ti}{\theta}
\newcommand{\g}{\gamma}
\newcommand{\sn}{\overline{\text{span}}}

\newcommand{\A}{\mathcal{A}}
\newcommand{\B}{\mathcal{B}}
\newcommand{\G}{\mathcal{G}}

 \theoremstyle{plain}
 \theoremstyle{plain}
 \theoremstyle{plain}
 \theoremstyle{definition}
 
 \theoremstyle{remark}


\newtheorem{thm*}{Teorema}[section]

\newcommand{\CHI}{\hbox{\raise .4ex \hbox{$\chi$}}}

\def\subset{\subseteq}

\begin{document}

\title{Sampling in a Union of Frame Generated Subspaces}

\author[M. Anastasio and  C. Cabrelli]{Magal\'i Anastasio and  Carlos Cabrelli}

\address{\textrm{(M. Anastasio)}
Departamento de Matem\'atica,
Facultad de Ciencias Exac\-tas y Naturales,
Universidad de Buenos Aires, Ciudad Universitaria, Pabell\'on I,
1428 Buenos Aires, Argentina and
CONICET, Consejo Nacional de Investigaciones
Cient\'ificas y T\'ecnicas, Argentina}
\email{manastas@dm.uba.ar}

\address{\textrm{(C. Cabrelli)}
Departamento de Matem\'atica,
Facultad de Ciencias Exac\-tas y Naturales,
Universidad de Buenos Aires, Ciudad Universitaria, Pabell\'on I,
1428 Buenos Aires, Argentina and
CONICET, Consejo Nacional de Investigaciones
Cient\'ificas y T\'ecnicas, Argentina}
\email{cabrelli@dm.uba.ar}

 \thanks{The research of
M. Anastasio and C. Cabrelli  is partially supported by
Grants: PICT 15033, CONICET, PIP 5650, UBACyT X058 and X108}

\date{\today}
\maketitle

\begin{abstract}
A new paradigm in sampling theory has been developed recently by Lu and Do.
In this new approach the classical linear model is replaced by a non-linear, but structured
model consisting of a union of subspaces. This is the natural approach for the new theory of
compressed sampling, representation of sparse signals and signals with finite
rate of innovation. In this article we extend the theory of Lu and Do,
for the case that the subspaces in the union are shift-invariant spaces. We describe  the subspaces by means of frame generators instead of orthonormal bases.
We show that, the one to one and stability  conditions for  the sampling operator, are valid for this more general case.
\vspace{5mm} \\
\noindent {\it Key words and phrases}: Sampling, shift-invariant spaces,
frames, Gramian operator, Riesz basis, compressed sampling, angle between subspaces.
\vspace{3mm}\\
\noindent {\it 2000 AMS Mathematics Subject Classification} --- Primary 94A20, Se\-condary 94A12, 94A08.
\end{abstract}

\section{Introduction}
Recently, Lu and Do \cite{DL08} extended the sampling problem assuming
that the signals to be sampled belong to a union of subspaces instead of a single subspace.
This approach represents a new paradigm in sampling theory.

In the classical setting the  signals are assumed to belong to a  single space of functions,
usually the Paley-Wiener space of band-limited functions. Since in many applications
the band-limitedness hypothesis is not realistic, other spaces of functions were considered,
mainly, shift-invariant spaces (SIS) with very ge\-neral generators.
In the approach of Lu and Do, the signals belong to a union of subspaces instead of a single one.
This simple idea  may have a great impact in many applications in signal processing,
in particular in the emerging theory of compressed sensing \cite{CT06}, \cite{CRT06}, \cite{Don06} and signals with finite rate of innovations \cite{VMB02}.

To describe the problem,  assume that $\F$ is a union of  subspaces from some Hilbert space
 $\mathcal{H}$ and a signal $s$ is extracted from $\F$. We take some measurements of that signal. These measurements can be thought of as the result of the  application of a series of functionals $\{\varphi_{\alpha}\}_{\alpha}$ to our signal $s.$
The problem is then to reconstruct the signal  using only the measurements $\{\varphi_{\alpha}(s)\}_{\alpha}$ and some des\-cription of the subspaces in $\F$.
The series of functionals define  an operator, {\it the sampling operator}, acting on the
ambient space $\mathcal{H}$ and taking values in a suitable sequence space.
Under some hypothesis on the structure of the subspaces, Lu and Do found necessary and sufficient conditions on these functionals in order for the sampling operator to be  stable and one-to-one when restricted to the union of the subspaces.
These conditions were obtained in two settings. In the euclidian space and in $L^2(\R^d)$.
In this latter case the subspaces considered were finitely generated shift-invariant spaces.

Blumensath and Davies \cite{BD07} studied the problem of sampling in union of subspaces  in the finite dimensional case, extending some of  the results in  Lu and Do \cite{DL08}.  They applied their results to  compressed sensing models and sparse signals.
In \cite{EM08}, Eldar developed a general framework for robust and efficient recovery of a signal from a given set of samples. The signal is a finite length vector that is sparse in some given basis and is assumed to lie in a union of subspaces.
Aldroubi et al in \cite{ACM08} established the existence of an optimal union of subspaces model
for a given data set in an abstract setting and considered the finite dimensional case and the shift-invariant case for $L^2(\R^d)$. They also developed an algorithm to find the model that fits the data set.

There are two technical aspects in the approach of Lu and Do that restrict the applicability  of their results in the shift-invariant space case.
The first one is due to the fact that the conditions are obtained in terms of Riesz bases
of translates of the SISs involved, and it is well known that not every SIS has a Riesz basis
of translates.
The second one is that the approach is based upon the sum of every two of the SISs in the union.
The conditions on the sampling operator are then obtained using fiberization  techniques on that sum.
This requires that the sum of each of two subspaces is a closed subspace, which is not true in general.

In this article we obtain the conditions for the sampling operator to be one-to-one and stable in terms
of {\it frames} of translates of the SISs instead of orthonormal basis. This extends the previous results to arbitrary SISs and in particular removes the restrictions mentioned above.
It is very important to have conditions based on frames, specially for applications, since frames are more flexible and simpler to construct. Frames of translates for shift-invariant spaces with generators that are smooth  and with good decay can be easily obtained.

On the other side we show that, using known results from the theory of SISs,  it is possible to determine families of subspaces on which the conditions for stability and injectivity are necessary and sufficient.

The article is organized in the following way:  Section 2 contains some notation and basic results that will be needed throughout. In Section 3 we set the problem of sampling in a union of subspaces in the general context of an abstract Hilbert space. We also give injectivity and stability conditions for the sampling operator, within this general setting. The case of finite-dimensional subspaces is studied in Section 4. In Section 5 we analyze the problem for   the Hilbert space  $L^2(\R^n)$ and  sampling in a union of finitely generated shift-invariant spaces.
Finally in Section 6  we use the notion of angle between subspaces to obtain necessary and sufficient conditions
for the closedness of the sum of two shift-invariant spaces.

\section{Preliminaries}

We will assume that $I$ and $J$ are countable index sets and $\mathcal{H}$ is a separable Hilbert space
over the complex field.

\begin{definition}\label{frames}

A sequence $X=\{x_j\}_{j\in J}$  in $\mathcal{H}$ is a {\it Bessel sequence} if there
exists a constant $0<\beta<+\infty$ such that
$$\sum_{j\in J}|\langle h,x_j\rangle|^2\leq\beta\,\|h\|^2_{\mathcal{H}}\quad\forall\,h\in \mathcal{H}.$$

If in addition there exist constants $0<\alpha\leq\beta<+\infty$ such that

$$\alpha\,\|h\|^2_{\mathcal{H}}\leq\sum_{j\in J}|\langle h,x_j\rangle|^2\leq\beta\,\|h\|^2_{\mathcal{H}}\quad\forall\,h\in \mathcal{H},$$
then $X$ is said to be a {\it frame} for $\mathcal{H}$. The sequence X  is a {\it Parseval frame} if $\alpha$ and $\beta$ can be chosen so that $\alpha=\beta=1.$

The closure of the span of $X$ will be denoted by $\sn \{x_j\}_{j\in J}$. $X$ is a
{\it frame sequence} if it is a frame for $\sn \{x_j\}_{j\in J}$.

\end{definition}

\begin{definition}

A sequence $X=\{x_j\}_{j\in J}$ in $\mathcal{H}$ is a {\it Riesz basis} for $\mathcal{H}$ if it is complete in $\mathcal{H}$
and there exist constants $0<\alpha\leq\beta<+\infty$ such that

$$\alpha\,\sum_{j\in J}|c_j|^2\leq\Big\|\sum_{j\in J}c_j x_j\Big\|^2\leq\beta\,\sum_{j\in J}|c_j|^2\quad\forall\,\{c_j\}_{j\in J}\in \ell^2(J).$$

\end{definition}

\begin{definition}
If $X=\{x_j\}_{j\in J}$ is a Bessel sequence in $\mathcal{H}$, we define
the \textit{analysis operator} as
$$B_X:\mathcal{H}\rightarrow \ell^2(J),\quad B_Xh=\{\langle
h,x_j\rangle\}_{j\in J}.$$
The adjoint of $B$ is the \textit{synthesis operator}, given by
$$B_X^*:\ell^2(J)\rightarrow \mathcal{H},\quad B_X^*c=\sum_{j\in J}
c_j x_j.$$
The Bessel condition guarantees the boundedness of $B_X$ and as a consequence, that of $B_X^*$.
\end{definition}

\begin{definition}\label{Gramian}
Suppose $X=\{x_j\}_{j\in J}$ is a Bessel sequence in $\mathcal{H}$ and $B_X$ is the analysis operator.
The Gramian of the system $X$ is defined by
$$G_X:\ell^2(J)\rightarrow \ell^2(J),\quad G_X:=B_XB_X^*.$$
We identify  $G_{X}$ with its matrix representation.
$$(G_{X})_{j,k}=\langle x_k,x_j\rangle\quad\forall\,j,k\in J.$$
\end{definition}

Given a Hilbert space $\mathcal{L}$  and a bounded linear operator $W:\mathcal{L}\rightarrow\mathcal{L}$ , we will denote by $\sigma(W)$ the spectrum of $W$.

The following is a well known property. Its proof can be deduced from \cite[Lemma 5.5.4]{Chr03},

\begin{proposition}\label{fra}

Let $X:=\{x_j\}_{j\in J}\subset \mathcal{H}$ be a Bessel sequence, then $X$ is a frame sequence with constants $\alpha$ and $\beta$ if and only if
$$\sigma (G_X)\subset \{0\}\cup[\alpha,\beta].$$
\end{proposition}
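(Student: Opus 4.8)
The plan is to reduce the statement to the spectral theory of the positive, self-adjoint \emph{frame operator} $S_X := B_X^*B_X$, exploiting the fact that $S_X = B_X^*B_X$ and $G_X = B_X B_X^*$ have the same spectrum away from $0$.

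First I would record the quadratic-form identity
$$\sum_{j \in J} |\ip{h}{x_j}|^2 = \norm{B_X h}^2 = \ip{S_X h}{h}, \qquad h \in \mathcal{H},$$
so that the Bessel and frame inequalities become statements about $\ip{S_X h}{h}$. Writing $V := \sn\{x_j\}_{j\in J} = \overline{\mathrm{range}\,B_X^*}$, this identity shows $\ker S_X = \ker B_X = V^{\perp}$. Hence $S_X$ is a positive self-adjoint operator for which $V$ and $V^{\perp}$ are reducing subspaces, $S_X$ vanishes on $V^{\perp}$, and $S_X|_V$ is an \emph{injective} positive self-adjoint operator on $V$. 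By the standard variational description of the spectrum of a positive self-adjoint operator, the frame-sequence condition ``$\alpha\norm{h}^2 \le \ip{S_X h}{h} \le \beta\norm{h}^2$ for all $h \in V$'' is equivalent to $\sigma(S_X|_V) \subset [\alpha,\beta]$.

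Next I would invoke the elementary operator-theoretic fact (deducible from the cited lemma) that for any bounded $B$ the nonzero spectra of $B^*B$ and $BB^*$ coincide; applied to $B = B_X$ this gives $\sigma(S_X) \setminus \{0\} = \sigma(G_X) \setminus \{0\}$. Since $\sigma(S_X) = \sigma(S_X|_V) \cup \{0\}$, the $\{0\}$ arising from $V^{\perp}$ when $V \ne \mathcal{H}$, one direction is immediate: if $X$ is a frame sequence with constants $\alpha,\beta$, then $\sigma(S_X|_V) \subset [\alpha,\beta]$, whence $\sigma(G_X)\setminus\{0\} = \sigma(S_X)\setminus\{0\} \subset [\alpha,\beta]$, and therefore $\sigma(G_X) \subset \{0\} \cup [\alpha,\beta]$.

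For the converse, assume $\sigma(G_X) \subset \{0\} \cup [\alpha,\beta]$. Then $\sigma(S_X|_V) \setminus \{0\} \subset \sigma(S_X)\setminus\{0\} = \sigma(G_X)\setminus\{0\} \subset [\alpha,\beta]$, so it only remains to exclude $0$ from $\sigma(S_X|_V)$. This is the one genuinely nontrivial point: since $\alpha > 0$, a hypothetical $0 \in \sigma(S_X|_V)$ would be an \emph{isolated} point of the spectrum, hence an eigenvalue of the self-adjoint operator $S_X|_V$; but $S_X|_V$ is injective, a contradiction. Thus $\sigma(S_X|_V) \subset [\alpha,\beta]$, which by the variational characterization is precisely the assertion that $X$ is a frame for $V$ with bounds $\alpha,\beta$, i.e.\ a frame sequence. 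The main obstacle is exactly this exclusion of $0$ in the backward direction; everything else is bookkeeping with the $B^*B$--$BB^*$ spectral identity.
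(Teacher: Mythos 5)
Your proof is correct. There is, however, no written proof in the paper to compare it against: the authors only remark that the statement ``can be deduced from \cite[Lemma 5.5.4]{Chr03}.'' Your argument is a legitimate, self-contained way of filling in that deduction, and it takes a slightly different (though morally equivalent) route from the one the citation suggests. Christensen's lemma characterizes frame sequences by a two-sided bound on the synthesis operator over $(\ker B_X^*)^{\perp}=\overline{\mathrm{range}\,B_X}$, i.e.\ it works with the Gramian $G_X=B_XB_X^*$ restricted to its reducing subspace $(\ker G_X)^{\perp}$ directly on the $\ell^2(J)$ side; you instead work with the frame operator $S_X=B_X^*B_X$ restricted to $V=\sn\{x_j\}_{j\in J}\subset\mathcal{H}$ and then transfer the conclusion to $G_X$ via the identity $\sigma(B^*B)\setminus\{0\}=\sigma(BB^*)\setminus\{0\}$ --- a fact the paper uses anyway in the proof of Theorem~\ref{teosta}, citing \cite{Rud91}. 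Either way the proof rests on the same two ingredients, and you supply both correctly: the variational characterization $\sigma(T)\subset[\alpha,\beta]\iff\alpha I\le T\le\beta I$ for a bounded self-adjoint $T$, and, in the converse direction, the exclusion of $0$ from $\sigma(S_X|_V)$ on the grounds that it would be an isolated spectral point of a self-adjoint operator, hence an eigenvalue, contradicting the injectivity of $S_X|_V$ (which you correctly obtain from $\ker S_X=\ker B_X=V^{\perp}$). That exclusion is indeed the only delicate step, and your handling of it is sound; the only cost of your detour through $\mathcal{H}$ is the extra appeal to the $B^*B$ versus $BB^*$ spectral identity.
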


\begin{definition}
Let $X:=\{x_j\}_{j\in J}$ and $Y:=\{y_i\}_{i\in I}$ be Bessel sequences in $\mathcal{H}$. Let  $B_{X}$ and $B_{Y}$ be the analysis operators associated to $X$ and $Y$ respectively. The {\it cross-correlation} operator is defined by
\begin{equation}\label{Gcross}
G_{X,Y}:\ell^2(J)\rightarrow \ell^2(I),\quad G_{X,Y}:= B_{Y}B_{X}^*.
\end{equation}
Identifying  again  $G_{X,Y}$ with its matrix representation, we write
$$(G_{X,Y})_{i,j}=\langle x_j,y_i\rangle\quad\forall\, j\in J,\forall\, i \in I.$$
\end{definition}

We will need the following property of Parseval frames.

\begin{proposition}\label{obpro1}

If $X =\{x_j\}_{j\in J}$ is a Parseval frame for a closed subspace $S$, and $B_X$ is the analysis operator associated to $X$,
then the orthogonal projection of $\mathcal{H}$ onto $S$ is
$$P_S=B_X^*B_X:\mathcal{H}\rightarrow\mathcal{H}, \quad P_Sh=\sum_{j\in J}\langle h, x_j\rangle\, x_j.$$
\end{proposition}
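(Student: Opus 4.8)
The plan is to verify directly that the bounded operator $P:=B_X^*B_X$ has the three defining features of the orthogonal projection onto $S$: it is self-adjoint, it annihilates $S^\perp$, and it fixes every vector of $S$. First I would record two structural observations. Self-adjointness is immediate, since $P^*=(B_X^*B_X)^*=B_X^*B_X=P$. Moreover, because $X$ is a frame \emph{for} $S$ each $x_j$ lies in $S$, and as $S$ is closed the synthesis operator satisfies $B_X^*c=\sum_{j\in J}c_j x_j\in S$ for every $c\in\ell^2(J)$; hence $P$ maps all of $\mathcal{H}$ into $S$. The Bessel hypothesis guarantees that all of these series converge in $\mathcal{H}$.

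Next I would dispose of the behaviour on $S^\perp$. If $h\in S^\perp$, then $\langle h,x_j\rangle=0$ for every $j$ because $x_j\in S$, so $B_Xh=0$ and therefore $Ph=B_X^*(B_Xh)=0$.

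The heart of the argument is to show that $P$ restricts to the identity on $S$, which is precisely the Parseval reconstruction formula. For $h\in S$ I would compute, using the definition of the adjoint,
$$\langle Ph,h\rangle=\langle B_Xh,B_Xh\rangle=\sum_{j\in J}|\langle h,x_j\rangle|^2=\|h\|^2_{\mathcal{H}},$$
where the final equality is the Parseval frame identity on $S$. Thus $\langle (P-I)h,h\rangle=0$ for every $h\in S$; since $P-I$ maps $S$ into $S$ and $\mathcal{H}$ is a complex Hilbert space, the polarization identity forces $P-I=0$ on $S$, that is, $Ph=h$ for all $h\in S$.

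Finally I would assemble the pieces: for arbitrary $h\in\mathcal{H}$ write $h=P_Sh+(h-P_Sh)$ with $P_Sh\in S$ and $h-P_Sh\in S^\perp$, so that linearity of $P$ together with the two previous steps yields $Ph=P_Sh+0=P_Sh$, and hence $P=P_S$. I expect the only genuinely delicate point to be the passage from the scalar Parseval identity to the operator equality $P|_S=I$; over $\mathbb{C}$ this is handled cleanly by polarization (equivalently, by the fact that a self-adjoint operator whose quadratic form vanishes must itself be zero), while all convergence issues are subsumed by the Bessel bound.
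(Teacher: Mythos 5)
Your proof is correct. The paper itself offers no proof of this proposition --- it is quoted as a well-known property of Parseval frames (it can be found, e.g., in Christensen's book cited as \cite{Chr03}) --- so there is nothing to compare against; your argument is the standard one: $P:=B_X^*B_X$ is self-adjoint, maps $\mathcal{H}$ into $S$ because each $x_j\in S$ and $S$ is closed, kills $S^\perp$, and equals the identity on $S$ by the Parseval identity plus polarization over $\C$. The only point worth making explicit is the one you already flagged: the scalar identity $\langle Ph,h\rangle=\|h\|^2$ on $S$ upgrades to $P|_S=I$ precisely because the ambient field is $\C$ (or, equivalently, because $(P-I)|_S$ is self-adjoint with vanishing quadratic form), and the paper's standing assumption that $\mathcal{H}$ is a complex Hilbert space makes this legitimate.
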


\begin{remark}
A Parseval frame for a Hilbert space does not need to be an orthogonal system. In fact, it is orthogonal if and only if every element of the set has unitary norm.
A simple example is the family  $X=\{\frac{1}{\sqrt{2}}e_1, \frac{1}{\sqrt{2}}e_1,e_n\}_{n\geq 2}$
where  $\{e_n\}_{n\in\N}$ is an orthonormal basis for the Hilbert space.
X is a Parseval frame that is not orthogonal and it is not even a basis.
\end{remark}

\section{The sampling operator}\label{section3}

Let $\mathcal{H}$ be a separable Hilbert space and $S \subset\mathcal{H}$ an arbitrary set.
Given $\Psi=\{\psi_i\}_{i\in I}$ a Bessel sequence in $\mathcal{H}$, the sampling problem consists of reconstructing
a signal $f\in S$ using the data $\{\langle f,\psi_i\rangle\}_{i\in I}$. We first require that the signals are uniquely determined by the data. That is, if we define the
{\it Sampling operator} by
\begin{equation}\label{op}
A:\mathcal{H}\rightarrow \ell^2(I), \quad Af:=\{\langle f,\psi_i\rangle\}_{i\in I},
\end{equation}
we require $A$ to be one-to-one on $S$. The set $\Psi$ will be called the {\it Sampling set}.

Note that the sampling operator $A$ is the analysis operator for the sequence  $\Psi$.

Another important property that is usually required for a sampling operator, is stability.
This is crucial to bound the error of  reconstruction in noisy situations.

The stable sampling condition was first proposed by \cite{Lan67} for the case when $S$ is the Paley-Wiener space.
It was then generalized in \cite{DL08} to the case when $S$ is a union of subspaces.

\begin{definition}
A sampling operator  $A$ is called {\it stable} on $S$ if there exist two constants
$0<\alpha\leq\beta<+\infty$ such that
$$\alpha\|x_1-x_2\|^2_{\mathcal{H}}\leq\|Ax_1-Ax_2\|^2_{\ell^2(I)}\leq\beta\|x_1-x_2\|^2_{\mathcal{H}}\quad\forall\,x_1,x_2\in S.$$
\end{definition}

When $S$ is a closed subspace, the injectivity and the stability can be expressed in terms of conditions on $P_S\Psi$, where $P_S$ is the orthogonal
projection of $\mathcal{H}$ onto $S$.

\begin{proposition}\label{reiny}
Let $\H$ be a Hilbert space,  $S \subset \H$  a closed subspace and $\Psi=\{\psi_i\}_{i\in I}$ a Bessel sequence in $\mathcal{H}$. If $A$ is the sampling operator associated to $\Psi,$ then we have
\begin{enumerate}
\item[i)] \label{one} The  operator $A$ is {\em one-to-one} on $S$ if and only if
$\{P_S\psi_i\}_{i\in I}$ is complete in S, that is $S=\overline{\textnormal{span}}\{P_S\psi_i\}_{i\in I}$.
\item [ii)]  The operator A is {\em stable} on $S$ with constants $\f$ and $\beta$ if and only if
$\{P_S\psi_i\}_{i\in I}$
is a frame for $S$ with constants $\f$ and $\beta$.
\end{enumerate}
\end{proposition}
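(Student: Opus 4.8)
The plan is to reduce everything to a single observation: for $f\in S$ the samples $\langle f,\psi_i\rangle$ depend only on the projected sampling vectors $P_S\psi_i$. Since $P_S$ is a self-adjoint idempotent with $P_Sf=f$ for $f\in S$, we have
$$\langle f,\psi_i\rangle=\langle P_Sf,\psi_i\rangle=\langle f,P_S\psi_i\rangle\qquad(f\in S,\ i\in I).$$
Hence, restricted to $S$, the sampling operator coincides with the analysis operator of the family $\{P_S\psi_i\}_{i\in I}\subset S$. First I would record that this family is again Bessel: it is the image of the Bessel sequence $\Psi$ under the orthogonal projection (a contraction) $P_S$, so $\sum_i|\langle h,P_S\psi_i\rangle|^2=\sum_i|\langle P_Sh,\psi_i\rangle|^2\le\beta\|P_Sh\|^2\le\beta\|h\|^2$. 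This makes the frame/completeness statements about $\{P_S\psi_i\}$ meaningful.

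For part (i) I would use linearity of $A$: on the subspace $S$, $A$ is one-to-one precisely when $\ker A\cap S=\{0\}$. By the displayed identity, $f\in S$ satisfies $Af=0$ if and only if $\langle f,P_S\psi_i\rangle=0$ for every $i$, i.e.\ $f$ is orthogonal to every $P_S\psi_i$ and therefore to $\overline{\text{span}}\{P_S\psi_i\}$. Since each $P_S\psi_i$ lies in $S$, one has $\overline{\text{span}}\{P_S\psi_i\}\subseteq S$, and the triviality of this orthogonality for all such $f$ holds exactly when $\overline{\text{span}}\{P_S\psi_i\}=S$, which is the asserted completeness condition. Both implications follow at once.

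For part (ii) I would again invoke linearity to replace the two-point condition by a one-vector condition: as $x_1,x_2$ range over the subspace $S$, the difference $f=x_1-x_2$ ranges over all of $S$, so the stability inequalities are equivalent to
$$\alpha\|f\|^2_{\mathcal{H}}\le\|Af\|^2_{\ell^2(I)}\le\beta\|f\|^2_{\mathcal{H}}\qquad(f\in S).$$
Using the identity once more, $\|Af\|^2=\sum_i|\langle f,P_S\psi_i\rangle|^2$ for $f\in S$, and these inequalities become verbatim the frame inequalities for $\{P_S\psi_i\}$ as a frame for $S$ with the same constants $\alpha,\beta$, giving the equivalence in both directions simultaneously.

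The argument is essentially immediate once the projection identity is in place; the only points needing care are bookkeeping rather than substance: that $P_S\psi_i\in S$, so that completeness and the frame property are being asserted \emph{inside} $S$ and not in $\mathcal{H}$, and that a positive lower frame bound already forces completeness, consistent with stability implying injectivity. Thus the main (very mild) obstacle is simply phrasing the frame and completeness statements relative to the subspace $S$.
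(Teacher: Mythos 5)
Your proof is correct and follows essentially the same route as the paper: both rest on the identity $\langle f,\psi_i\rangle=\langle P_Sf,\psi_i\rangle=\langle f,P_S\psi_i\rangle$ for $f\in S$, from which (i) reduces to the standard fact that completeness in $S$ is equivalent to trivial orthogonal complement within $S$, and (ii) to the verbatim frame inequalities. You simply spell out the bookkeeping (linearity reducing stability to a one-vector condition, the Bessel property of $\{P_S\psi_i\}$) that the paper leaves implicit.
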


\begin{proof}

The proof of {\it i)} is straightforward using that if $f\in S$ then
$$\langle f, P_S\psi_i\rangle=\langle P_S f,\psi_i\rangle=\langle f,\psi_i\rangle.$$
 For {\it ii)} note that for all  $f \in S$
$$\|Af\|^2_{\ell^2(I)}=\sum_{i\in I}|\langle f, \psi_n\rangle|^2=\sum_{i\in I}|\langle P_S f,\psi_n\rangle|^2=
\sum_{i\in I}|\langle f,P_S\psi_n\rangle|^2.$$
\end{proof}

\begin{remark}
Given a closed subspace $S$ in a Hilbert space $\H$, a sequence of vectors $\{\psi_i\}_{i\in I} \subset \H$
is called an {\it outer frame } for $S$ if $\{P_S\psi_i\}_{i \in I}$ is a frame for $S$. The notion of
outer frame was introduced in \cite{ACM04}. See also \cite{FW01} and \cite{LO04} for  related definitions.
Using this terminology, part ii) of Proposition \ref{reiny} says that the sampling operator $A$ is stable if
and only if  $\{\psi_i\}$ is an outer frame for $S$.
\end{remark}

In what follows we will extend one-to-one and stability conditions for the operator $A$,
to the case of a union of subspaces instead of a single subspace.

If $\{S_{\gamma}\}_{\gamma\in\Gamma}$ are closed subspaces of $\mathcal{H}$, with $\Gamma$ an arbitrary index set.
Let $$\chi:=\bigcup_{\gamma\in\Gamma} S_{\gamma}.$$

We want to study conditions on $\Psi$ so that the sampling operator $A$ defined by (\ref{op})
is one-to-one and stable on $\chi$.

This study continues the one initiated by Lu and Do \cite{DL08} in which they translated the conditions on $\chi$ into conditions on the subspaces defined by
\begin{equation}\label{suma}
S_{\g,\ti}:=S_{\gamma}+S_{\theta}=\{x+y\,:\,x\in S_{\gamma},\,y\in S_{\theta}\}.
\end{equation}

Working with the subspaces $S_{\g,\ti}$ instead of $\chi$, allows to exploit lineal pro\-perties of $A$.

They proved the following proposition.

\begin{proposition}\cite{DL08}\label{proposition-dl} With the above notation we have,
\begin{enumerate}
\item[i)]
The operator A is one-to-one on $\chi$ if and only if A is one-to-one on every  $S_{\g,\ti}$
with $\g,\ti\in\Gamma.$
\item[ii)]
The operator A is stable for $\chi$ with stability bounds $\alpha$ and $\beta$, if and only if A is stable for $S_{\g,\ti}$ with stability bounds $\alpha$ and $\beta$
for all $\g,\ti\in\Gamma,$ i.e.
$$\alpha\|x\|^2_{\mathcal{H}}\leq\|Ax\|^2_{\ell^2(I)}\leq\beta\|x\|^2_{\mathcal{H}}\quad\forall\,x\in S_{\g,\ti},\forall\ \g,\ti\in\Gamma.$$
\end{enumerate}
\end{proposition}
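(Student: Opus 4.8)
The plan is to reduce both statements to a single set-theoretic observation together with the linearity of $A$. Write $\Delta := \{x_1 - x_2 : x_1, x_2 \in \chi\}$ for the set of pairwise differences of elements of $\chi$. I claim that
$$\Delta \EQ \bigcup_{\gamma,\theta \in \Gamma} S_{\gamma,\theta}.$$
Indeed, if $x_1, x_2 \in \chi$ then $x_1 \in S_{\gamma}$ and $x_2 \in S_{\theta}$ for some $\gamma, \theta \in \Gamma$, and since $S_{\theta}$ is a subspace we have $x_1 - x_2 \in S_{\gamma} + S_{\theta} = S_{\gamma,\theta}$; conversely, any $z \in S_{\gamma,\theta}$ decomposes as $z = x + y$ with $x \in S_{\gamma}$ and $y \in S_{\theta}$, so that $z = x - (-y)$ with $x \in \chi$ and $-y \in S_{\theta} \subseteq \chi$, whence $z \in \Delta$. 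This identity is the only step that needs any care, and it rests entirely on the $S_{\theta}$ being subspaces, hence closed under negation.

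With this in hand, I would exploit that $A$ is linear, so that $Ax_1 - Ax_2 = A(x_1 - x_2)$ for every $x_1, x_2$. Both the injectivity condition and the stability condition on $\chi$ are phrased solely in terms of the quantities $\|A(x_1 - x_2)\|_{\ell^2(I)}$ and $\|x_1 - x_2\|_{\mathcal{H}}$ as $x_1, x_2$ range over $\chi$; by the displayed identity these quantities are exactly $\|Az\|_{\ell^2(I)}$ and $\|z\|_{\mathcal{H}}$ as $z$ ranges over $\bigcup_{\gamma,\theta} S_{\gamma,\theta}$.

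For part \textit{i)}, the operator $A$ is one-to-one on $\chi$ precisely when $A(x_1 - x_2) = 0$ forces $x_1 = x_2$, that is, when $\ker A \cap \Delta = \{0\}$. By the set identity this is the same as $\ker A \cap S_{\gamma,\theta} = \{0\}$ for every $\gamma, \theta \in \Gamma$; since each $S_{\gamma,\theta}$ is a subspace, the latter is exactly injectivity of $A$ on each $S_{\gamma,\theta}$. For part \textit{ii)}, $A$ is stable on $\chi$ with bounds $\alpha, \beta$ if and only if
$$\alpha\|z\|^2_{\mathcal{H}} \LE \|Az\|^2_{\ell^2(I)} \LE \beta\|z\|^2_{\mathcal{H}}$$
holds for every $z \in \Delta$. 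Again by the identity this is equivalent to requiring the same two-sided estimate for every $z \in S_{\gamma,\theta}$ and every $\gamma, \theta \in \Gamma$, which is the stated stability of $A$ on each subspace $S_{\gamma,\theta}$.

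I do not expect any genuine obstacle: once the equality $\Delta = \bigcup_{\gamma,\theta} S_{\gamma,\theta}$ is established, both equivalences follow immediately from the linearity of $A$ and the definitions. The only mild subtlety worth flagging is that stability on the subspace $S_{\gamma,\theta}$ is recorded in the homogeneous form $\alpha\|z\|^2 \le \|Az\|^2 \le \beta\|z\|^2$ rather than in the difference form used for $\chi$; these coincide because $S_{\gamma,\theta}$ being a subspace means its elements are precisely the differences of its own elements, so passing between the two formulations costs nothing.
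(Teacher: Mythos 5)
Your argument is correct and complete: the identity $\{x_1-x_2 : x_1,x_2\in\chi\}=\bigcup_{\gamma,\theta}S_{\gamma,\theta}$, which rests only on each $S_\theta$ being a subspace, together with linearity of $A$ immediately yields both equivalences, and you correctly note why the homogeneous and difference forms of stability coincide on a subspace. The paper itself states this proposition as a citation to Lu and Do and gives no proof, but what you wrote is exactly the standard reduction that the result is based on, so there is nothing to flag.
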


The sum of two closed infinite-dimensional  subspaces of a Hilbert space is not necessarily
 closed (see Example \ref{ejclau}). Furthermore, the injectivity of an operator on a subspace does not imply the injectivity on its closure. So,
we can not apply Proposition \ref{reiny} to the subspaces $S_{\g,\ti}.$
However, we can obtain a sufficient condition for the injectivity.
\begin{proposition}\label{complete}
 If $\{P_{\overline{S}_{\g,\ti}}\psi_i\}_{i\in I}$ is complete on $\overline{S}_{\g,\ti}$ for every $\g,\ti\in\Gamma,$ then $A$ is one-to-one on $\chi.$
\end{proposition}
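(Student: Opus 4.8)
The plan is to reduce the statement to the two preceding results, Proposition \ref{reiny} i) and Proposition \ref{proposition-dl} i), by arguing on the \emph{closed} subspaces $\overline{S}_{\gamma,\theta}$ rather than on the possibly non-closed sums $S_{\gamma,\theta}$ themselves. The crux is to recognize that the obstruction mentioned just before the statement — that injectivity on a subspace need not propagate to its closure, so Proposition \ref{reiny} cannot be applied directly to $S_{\gamma,\theta}$ — is avoided by going in the opposite direction: we deduce injectivity on the subspace \emph{from} injectivity on the closure, which is immediate.

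First I would fix $\gamma,\theta\in\Gamma$ and put $S=\overline{S}_{\gamma,\theta}$, a closed subspace of $\mathcal{H}$. The hypothesis says precisely that $\{P_{S}\psi_i\}_{i\in I}$ is complete in $S$. Since $S$ is closed and $\Psi$ is a Bessel sequence, Proposition \ref{reiny} i) applies verbatim and yields that $A$ is one-to-one on $S=\overline{S}_{\gamma,\theta}$.

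Next I would use the inclusion $S_{\gamma,\theta}\subseteq\overline{S}_{\gamma,\theta}$. The restriction of an injective map to any subset is again injective, so $A$ is one-to-one on $S_{\gamma,\theta}$. Since $\gamma,\theta$ were arbitrary, $A$ is one-to-one on every $S_{\gamma,\theta}$ with $\gamma,\theta\in\Gamma$. Proposition \ref{proposition-dl} i) then gives that $A$ is one-to-one on $\chi$, which completes the argument. There is no real analytic obstacle here; the only point requiring care is that Proposition \ref{reiny} must be invoked on the closed subspace $\overline{S}_{\gamma,\theta}$ and never on $S_{\gamma,\theta}$ itself, after which the reasoning is entirely formal.
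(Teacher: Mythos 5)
Your proof is correct and is exactly the argument the paper intends (the paper in fact states Proposition \ref{complete} without proof): apply Proposition \ref{reiny} i) to the closed subspace $\overline{S}_{\g,\ti}$, restrict the resulting injectivity to $S_{\g,\ti}\subseteq\overline{S}_{\g,\ti}$, and conclude via Proposition \ref{proposition-dl} i).
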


When the  subspaces of the family $\{{S}_{\g,\ti}\}_{\g,\ti\in\Gamma}$ are all  closed,
the condition in Proposition \ref{complete}, will be also necessary for
the injectivity of $A$ on $\chi$.
So, a natural question will be, when the sum of two closed subspaces  of a Hilbert space is  closed.
In Section \ref{section6} we study this problem in several situations.

In the case of the stability, Proposition \ref{reiny} can be applied since, by the boundedness of $A$, we have the following.

\begin{proposition}\label{lemstab}
Let $S$ be a subspace of $\mathcal{H}$, the operator A is stable for $S$  with constants $\alpha$ and $\beta$ if and only if it is stable for
$\overline{S}$ with constants $\alpha$ and $\beta$.
\end{proposition}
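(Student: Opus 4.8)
The plan is to prove the two implications separately, the nontrivial content being a density-plus-continuity argument. One direction is immediate: if $A$ is stable for $\overline{S}$ with constants $\alpha$ and $\beta$, then since $S \subset \overline{S}$ the defining inequalities hold in particular for every pair $x_1, x_2 \in S$, so $A$ is stable for $S$ with the same constants.

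For the converse I would assume $A$ is stable for $S$ with constants $\alpha$ and $\beta$, and fix arbitrary $y_1, y_2 \in \overline{S}$. By density of $S$ in $\overline{S}$ I choose sequences $\{x_n^{(1)}\}, \{x_n^{(2)}\} \subset S$ with $x_n^{(1)} \to y_1$ and $x_n^{(2)} \to y_2$ in $\mathcal{H}$. Applying the stability inequalities to each pair $x_n^{(1)}, x_n^{(2)} \in S$ gives, for every $n$,
$$\alpha\, \norm{x_n^{(1)} - x_n^{(2)}}^2 \le \norm{A x_n^{(1)} - A x_n^{(2)}}^2 \le \beta\, \norm{x_n^{(1)} - x_n^{(2)}}^2.$$

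The crucial observation is that the sampling operator $A$ is bounded, being the analysis operator of the Bessel sequence $\Psi$, so that the Bessel condition guarantees its continuity. By linearity $A x_n^{(1)} - A x_n^{(2)} = A(x_n^{(1)} - x_n^{(2)}) \to A(y_1 - y_2) = A y_1 - A y_2$ in $\ell^2(I)$, and combining this with the continuity of the norms in $\mathcal{H}$ and in $\ell^2(I)$, I let $n \to \infty$ in both the left-hand and right-hand inequalities to obtain
$$\alpha\, \norm{y_1 - y_2}^2 \le \norm{A y_1 - A y_2}^2 \le \beta\, \norm{y_1 - y_2}^2.$$

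Since $y_1, y_2 \in \overline{S}$ were arbitrary, this is precisely the stability of $A$ on $\overline{S}$ with the same constants. I do not anticipate any genuine obstacle here: the only point demanding a little care is that both the lower and the upper bound survive the passage to the limit, which they do because each side depends continuously on the chosen pair while the constants $\alpha$ and $\beta$ stay fixed. The boundedness of $A$ furnished by the Bessel hypothesis is exactly what legitimizes the limiting step.
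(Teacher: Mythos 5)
Your proof is correct and follows exactly the route the paper intends: the paper offers no written proof beyond the remark that the statement holds ``by the boundedness of $A$,'' and your density-plus-continuity argument, with boundedness of $A$ supplied by the Bessel condition on $\Psi$, is precisely the standard way to fill in that remark. Both directions are handled properly, and passing the non-strict inequalities to the limit is legitimate for the reasons you give.
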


As a consequence of this, using Propositions \ref{reiny} and  part ii) of Proposition \ref{proposition-dl}, we
have

\begin{proposition}
$A$ is stable for $\chi$ with constants
$\alpha$ and $\beta$ if and only if
$\{P_{\overline{S}_{\g,\ti}}\psi_i\}_{i\in I}$ is a frame for $\overline{S}_{\g,\ti}$ for every $\g,\ti\in\Gamma$ with the same constants $\alpha$ and $\beta$.
\end{proposition}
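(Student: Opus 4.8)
The plan is to obtain this statement purely by composing the three equivalences just established, taking care that the stability constants $\alpha$ and $\beta$ are transported unchanged through each step. First I would apply part ii) of Proposition \ref{proposition-dl}: the operator $A$ is stable for $\chi$ with bounds $\alpha,\beta$ if and only if $A$ is stable for every sum $S_{\g,\ti}$, $\g,\ti\in\Gamma$, with those same bounds. This already records that a single pair of constants must serve uniformly over all pairs $\g,\ti$, which is exactly the uniformity demanded in the conclusion.

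Next, for each fixed $\g,\ti$ I would pass to the closure using Proposition \ref{lemstab}, which rests on the continuity of the bounded operator $A$: stability on $S_{\g,\ti}$ with bounds $\alpha,\beta$ is equivalent to stability on $\overline{S}_{\g,\ti}$ with the same bounds. The essential role of this step is to replace the possibly non-closed subspace $S_{\g,\ti}$ by the closed subspace $\overline{S}_{\g,\ti}$, since the final ingredient requires closedness.

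Finally, because $\overline{S}_{\g,\ti}$ is closed, part ii) of Proposition \ref{reiny} applies with $S=\overline{S}_{\g,\ti}$ and states that stability of $A$ there with bounds $\alpha,\beta$ is equivalent to $\{P_{\overline{S}_{\g,\ti}}\psi_i\}_{i\in I}$ being a frame for $\overline{S}_{\g,\ti}$ with constants $\alpha,\beta$. Stringing the three biconditionals together gives the proposition. Since every step is an exact equivalence that preserves the constants, there is no genuine obstacle; the only point needing attention is that the passage to the closure in the second step is precisely what licenses the use of Proposition \ref{reiny}, whose hypothesis is a closed subspace.
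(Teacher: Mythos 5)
Your proposal is correct and follows exactly the route the paper intends: chaining Proposition \ref{proposition-dl}~ii), Proposition \ref{lemstab}, and Proposition \ref{reiny}~ii), with the passage to the closure justifying the application of Proposition \ref{reiny}. The paper leaves this composition implicit ("As a consequence of this, using Propositions \ref{reiny} and part ii) of Proposition \ref{proposition-dl}"), and your write-up simply makes the same three-step argument explicit.
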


\section{Union of finite-dimensional subspaces}\label{section4}

In this section  we will first obtain conditions on the sequence $\{\psi_i\}_{i\in I}$ for the sampling operator to be  one-to-one on a union of finite-dimensional subspaces. We will then analyze the stability requirements.
We are interested in expressing these conditions in terms of the generators of the sum of every two subspaces of the union.

\subsection{The one-to-one condition for the sampling operator}

Let $\mathcal{H}$ be a Hilbert space, $\Psi=\{\psi_i\}_{i\in I}$  a Bessel sequence in $\mathcal{H}$,  and $A$ the sampling operator associated to $\Psi$ as in (\ref{op}).

Let $S$ be a finite-dimensional subspace of $\H$ and  $\Phi=\{\phi_j\}_{j=1}^{d}$ a finite frame for $S$.
(Recall that a finite set of vectors from a finite-dimensional subspace is a frame for that subspace if and only if it spans it.)

The cross-correlation operator associated to $\Psi$ and $\Phi$ (see (\ref{Gcross})) in this case can be written as,
$$G_{\Phi,\Psi}:\C^d\rightarrow \ell^2(I),\quad G_{\Phi,\Psi}=AB_{\Phi}^*,$$
where $B^*_{\Phi}:\C^d\rightarrow \mathcal{H}$ is the synthesis operator associated to $\Phi$.

The next theorem gives necessary and sufficient  conditions on the cross-correlation operator for the sampling operator to be one-to-one on $S$.

\begin{theorem}\label{teinys}
Let $\Psi=\{\psi_i\}_{i\in I}$ be a Bessel sequence for $\mathcal{H}$, $S$ a finite-dimensional  subspace of $\mathcal{H}$ and $\Phi=\{\phi_j\}_{j=1}^{d}$ a frame for $S.$
Then the following are equivalent:
\begin{enumerate}
\item [i)] $\Psi$ provides a one-to-one sampling operator on $S.$
\item [ii)] $\ker(G_{\Phi,\Psi})=\ker(B^*_{\Phi}).$
\item [iii)] $\dim(\textnormal{range}(G_{\Phi,\Psi}))=\dim(S).$
\end{enumerate}
\end{theorem}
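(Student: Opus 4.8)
The plan is to exploit the factorization $G_{\Phi,\Psi} = A B_{\Phi}^*$ together with the fact that $B_{\Phi}^*$ maps $\C^d$ \emph{onto} $S$, and to translate each statement into a linear-algebra assertion about the finite-dimensional domain $\C^d$. The first thing I would record is the one inclusion that holds unconditionally: since $G_{\Phi,\Psi} = A B_{\Phi}^*$, any $c$ with $B_{\Phi}^* c = 0$ satisfies $G_{\Phi,\Psi} c = A B_{\Phi}^* c = 0$, so $\ker(B_{\Phi}^*) \subseteq \ker(G_{\Phi,\Psi})$ always. The content of ii) is therefore the reverse inclusion.

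For the equivalence i) $\Leftrightarrow$ ii): because $\Phi$ is a frame for $S$ it spans $S$, so the synthesis operator $B_{\Phi}^*$ has $\textnormal{range}(B_{\Phi}^*) = S$; hence every $f \in S$ is of the form $f = B_{\Phi}^* c$ for some $c \in \C^d$. Since $A$ is linear, $A$ is one-to-one on $S$ exactly when $\ker A \cap S = \{0\}$, i.e. when $A B_{\Phi}^* c = 0$ forces $B_{\Phi}^* c = 0$. That is precisely the inclusion $\ker(G_{\Phi,\Psi}) \subseteq \ker(B_{\Phi}^*)$, which together with the automatic inclusion above is the equality in ii).

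For ii) $\Leftrightarrow$ iii): I would apply the rank-nullity theorem on the finite-dimensional domain $\C^d$ to both operators. Applied to $B_{\Phi}^*$ it gives $\dim(\ker B_{\Phi}^*) = d - \dim S$, using $\textnormal{range}(B_{\Phi}^*) = S$; applied to $G_{\Phi,\Psi}$ it gives $\dim(\ker G_{\Phi,\Psi}) = d - \dim(\textnormal{range}(G_{\Phi,\Psi}))$, the range being finite-dimensional since the domain is. Thus iii), namely $\dim(\textnormal{range}(G_{\Phi,\Psi})) = \dim S$, is equivalent to $\dim(\ker G_{\Phi,\Psi}) = \dim(\ker B_{\Phi}^*)$. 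Because we already know $\ker(B_{\Phi}^*) \subseteq \ker(G_{\Phi,\Psi})$ and both are subspaces of the finite-dimensional space $\C^d$, equality of dimensions upgrades to equality of the subspaces, which is ii).

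I do not anticipate a genuine obstacle; the argument is essentially bookkeeping with the factorization and rank-nullity. The one point to keep in mind is that $\Phi$ is only a frame, not a basis, so $B_{\Phi}^*$ may have nontrivial kernel. This is exactly why injectivity of $A$ on $S$ cannot be phrased as $\ker(G_{\Phi,\Psi}) = \{0\}$ but must be stated as the equality $\ker(G_{\Phi,\Psi}) = \ker(B_{\Phi}^*)$, and why the a priori inclusion is needed to convert the dimension count in iii) back into the set equality in ii).
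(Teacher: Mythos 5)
Your proposal is correct and follows exactly the route the paper intends: the paper's proof is the one-line remark that everything is ``straightforward using that the range of the operator $B^*_{\Phi}$ is $S$,'' and your argument is precisely the bookkeeping (factorization $G_{\Phi,\Psi}=AB_{\Phi}^*$, surjectivity of $B_{\Phi}^*$ onto $S$, rank--nullity on $\C^d$, and the a priori inclusion $\ker(B_{\Phi}^*)\subseteq\ker(G_{\Phi,\Psi})$) that this remark leaves implicit.
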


\begin{proof}
The proof is straightforward using that the range of the operator $B^*_{\Phi}$ is $S$.
\end{proof}

\begin{remark}
Note that the conditions in Theorem \ref{teinys} do not depend on the particular  chosen frame.
That is, if there exists a frame $\Phi$ for $S$, such that
 $\dim(\text{range}(G_{\Phi,\Psi}))=\dim(S),$ then
  $\dim(\text{range}(G_{\widetilde{\Phi},\Psi}))=\dim(S),$ for any
 frame $\widetilde{\Phi}$ for $S$.
\end{remark}

Now  we will apply the previous theorem for the case of a union of subspaces.

Let $\{S_{\gamma}\}_{\gamma\in\Gamma}$ be a collection of finite-dimensional subspaces of $\mathcal{H}$, with $\Gamma$ an arbitrary index set.
Define, $$\chi:=\bigcup_{\gamma\in\Gamma} S_{\gamma}.$$
As before, set $S_{\g,\ti}:=S_{\gamma}+S_{\theta}.$

We obtain the following result which extends the result in \cite{DL08} to the case that the subspaces in the union are described by frames.

\begin{theorem}\label{teiny}
Let $\Psi=\{\psi_i\}_{i\in I}$ be a Bessel sequence for $\mathcal{H}$ and for every $\g,\ti\in\Gamma$, let $\Phi_{\g,\ti}$ be a frame for $S_{\g,\ti}$,
the following are equivalent:
\begin{enumerate}
\item [i)] $\Psi$ provides a one-to-one sampling operator on $\chi.$
\item [ii)] $\dim(\textnormal{range}(G_{\Phi_{\g,\ti},\Psi}))=\dim(S_{\g,\ti})$ for all $\g,\ti\in\Gamma.$
\end{enumerate}
\end{theorem}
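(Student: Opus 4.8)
The plan is to reduce Theorem \ref{teiny} to the single-subspace Theorem \ref{teinys} by invoking Proposition \ref{proposition-dl}, which already transfers the one-to-one property on $\chi$ to the one-to-one property on each sum $S_{\g,\ti}$. Since every $S_{\gamma}$ is finite-dimensional, each $S_{\g,\ti} = S_{\gamma}+S_{\theta}$ is a finite-dimensional, and hence automatically closed, subspace of $\mathcal{H}$; thus the machinery of Section \ref{section4} applies verbatim to each $S_{\g,\ti}$ without the closedness difficulties that plague the infinite-dimensional setting.

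First I would record the chain of equivalences. By part i) of Proposition \ref{proposition-dl}, statement i) of the theorem ($A$ one-to-one on $\chi$) holds if and only if $A$ is one-to-one on $S_{\g,\ti}$ for every $\g,\ti\in\Gamma$. Next, fix a pair $\g,\ti$ and apply Theorem \ref{teinys} to the finite-dimensional subspace $S = S_{\g,\ti}$ with the chosen frame $\Phi = \Phi_{\g,\ti}$: the equivalence of i) and iii) there says that $A$ is one-to-one on $S_{\g,\ti}$ if and only if $\dim(\textnormal{range}(G_{\Phi_{\g,\ti},\Psi})) = \dim(S_{\g,\ti})$. Quantifying over all pairs $\g,\ti$ then identifies statement ii) of the present theorem with the condition that $A$ be one-to-one on every $S_{\g,\ti}$, completing the loop.

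The only point requiring a word of justification is that Theorem \ref{teinys} is legitimately applicable to $S_{\g,\ti}$, which rests on $S_{\g,\ti}$ being finite-dimensional. This is immediate: a sum of two finite-dimensional subspaces has dimension at most the sum of the two dimensions, so it is finite-dimensional and therefore closed. I would also note, as the remark following Theorem \ref{teinys} guarantees, that the rank condition in ii) is independent of which frame $\Phi_{\g,\ti}$ is selected for $S_{\g,\ti}$, so the statement is well-posed regardless of the particular generating frames.

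I do not anticipate a genuine obstacle here, since all the substantive work has been isolated into Propositions \ref{proposition-dl} and Theorem \ref{teinys}; the role of this proof is purely to assemble them. If anything, the one place to be careful is simply to make explicit that the finite-dimensionality of the $S_{\gamma}$ is what lets us bypass the closedness issue that forced the weaker sufficient condition of Proposition \ref{complete} in the general Hilbert-space setting — here equivalence, not merely sufficiency, is available precisely because each $S_{\g,\ti}$ is closed.
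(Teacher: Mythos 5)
Your proposal is correct and follows exactly the route the paper intends: the paper presents Theorem \ref{teiny} as an immediate application of Theorem \ref{teinys} to each finite-dimensional (hence closed) sum $S_{\g,\ti}$, with Proposition \ref{proposition-dl} supplying the reduction from $\chi$ to the pairwise sums. Your added remarks on closedness and frame-independence are accurate and consistent with the paper's surrounding discussion.
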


Note that if $I$ is a finite set, the problem of testing the injectivity of $A$ on $\chi$ reduces to check that the rank of the cross-correlation
matrices are equal to the dimension of the subspaces $S_{\g,\ti}$.

In this case a lower bound for the cardinality of the sampling set can be established. This is stated
in the following corollary from \cite{DL08}. We include a proof of the result based on Theorem \ref{teiny}.

Here  $\# I$  denotes the cardinality of the finite set $I$.

\begin{corollary}
If the operator A is one-to-one on $\chi$ and $I$ is finite, then
$$\# I \geq \sup_{\g,\ti\in \Gamma}(\dim(S_{\g,\ti})).$$
\end{corollary}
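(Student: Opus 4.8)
The plan is to deduce the bound directly from Theorem~\ref{teiny}. Suppose $A$ is one-to-one on $\chi$ and $I$ is finite. By part ii) of Theorem~\ref{teiny}, for every $\g,\ti\in\Gamma$ we have
\[
\dim(\textnormal{range}(G_{\Phi_{\g,\ti},\Psi}))=\dim(S_{\g,\ti}).
\]
The key observation is to control the left-hand side. Since $I$ is finite, the cross-correlation operator $G_{\Phi_{\g,\ti},\Psi}$ takes values in $\ell^2(I)\cong\C^{\# I}$, so its range is a subspace of a space of dimension $\# I$. Hence
\[
\dim(\textnormal{range}(G_{\Phi_{\g,\ti},\Psi}))\le \# I
\]
for every $\g,\ti\in\Gamma$.

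Combining these two facts gives $\dim(S_{\g,\ti})\le \# I$ for all $\g,\ti\in\Gamma$. Taking the supremum over $\g,\ti\in\Gamma$ yields
\[
\sup_{\g,\ti\in\Gamma}\dim(S_{\g,\ti})\le \# I,
\]
which is the desired inequality.

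There is essentially no obstacle here: the only point requiring a word of care is that the range of an operator into a finite-dimensional space cannot exceed the dimension of that space, which is immediate once one notes that $\ell^2(I)$ has dimension $\# I$ when $I$ is finite. Everything else is a direct substitution into the equivalence already established. I would present this in two or three lines, emphasizing only the passage $\dim(\textnormal{range}(G_{\Phi_{\g,\ti},\Psi}))\le\#I$ as the step that invokes finiteness of $I$.
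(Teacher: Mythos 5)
Your argument is correct and is essentially identical to the paper's proof: both invoke part ii) of Theorem~\ref{teiny} to identify $\dim(S_{\g,\ti})$ with $\dim(\textnormal{range}(G_{\Phi_{\g,\ti},\Psi}))$ and then bound the latter by $\# I$ since the range lies in $\C^{\# I}$. Nothing further is needed.
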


\begin{proof}
Since $I$ is finite, we have that $\text{range} (G_{\Phi_{\g,\ti},\Psi})\subset\C^{\#I}$. Thus, using
part ii) of Theorem \ref{teiny}, we obtain that
$$\dim(S_{\g,\ti})=\dim(\text{range} (G_{\Phi_{\g,\ti},\Psi}))\leq \# I,\quad\forall\,\g,\ti\in\Gamma.$$
\end{proof}

\subsection{The stability condition for the sampling operator}

We are now interested in studying  conditions for stability of the sampling ope\-rator.
These conditions will be set in terms of the  cross-correlation operator.
We will consider Parseval frames to obtain simpler conditions.

Given Hilbert spaces $\mathcal{L}$ and $\mathcal{L'}$ and a bounded linear operator $W:\mathcal{L}\rightarrow\mathcal{L'}$, we denote by $\sigma^2(W)$  the set
$$\sigma^2(W)=\sigma(W^*W).$$

\begin{theorem}\label{teosta}
Let $\Psi=\{\psi_i\}_{i\in I}$ be a Bessel sequence for $\mathcal{H}$, $S$ a finite-dimensional  subspace of $\mathcal{H}$ and $\Phi$ a Parseval frame for $S$.

The sequence $\Psi$ provides a stable sampling operator for $S$  with constants $\alpha$ and $\beta$ if and only if
\begin{enumerate}
\item [i)] $\dim(\textnormal{range}(G_{\Phi,\Psi}))=\dim(S)$ and
\item [ii)] $\sigma^2(G_{\Phi,\Psi})\subseteq\{0\}\cup[\alpha,\beta].$
\end{enumerate}
\end{theorem}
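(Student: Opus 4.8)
The plan is to reduce the stability of $A$ on $S$ to a frame statement about the projected family $P_S\Psi := \{P_S\psi_i\}_{i\in I}$, and then to recast that statement spectrally through the cross-correlation operator. By part ii) of Proposition \ref{reiny}, $A$ is stable on $S$ with constants $\alpha,\beta$ if and only if $P_S\Psi$ is a frame for $S$ with constants $\alpha,\beta$. First I would separate ``frame for $S$'' into two independent requirements: that $P_S\Psi$ be a \emph{frame sequence} with these bounds (a frame for $\sn P_S\Psi$) and that it be \emph{complete} in $S$, i.e.\ $\sn P_S\Psi = S$. The lower bound forces completeness, since if $\sn P_S\Psi$ were a proper subspace of $S$ any nonzero $h \in S$ orthogonal to it would violate $\alpha\|h\|_{\H}^2 \le \sum_{i\in I}|\langle h, P_S\psi_i\rangle|^2$; conversely completeness is needed only to pin down the span, not the bounds.

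The crux is a spectral identity linking the Gramian of $P_S\Psi$ to the cross-correlation operator. Since $A = B_\Psi$ and $P_S$ is a self-adjoint idempotent, the analysis operator of $P_S\Psi$ is $B_{P_S\Psi} = A P_S$, so its Gramian is $G_{P_S\Psi} = B_{P_S\Psi}B_{P_S\Psi}^* = A P_S A^*$. Because $\Phi$ is a Parseval frame for $S$, Proposition \ref{obpro1} gives $P_S = B_\Phi^* B_\Phi$; substituting and using $G_{\Phi,\Psi} = A B_\Phi^*$ yields $G_{P_S\Psi} = A B_\Phi^* B_\Phi A^* = G_{\Phi,\Psi}\,G_{\Phi,\Psi}^*$. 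As $P_S$ is a contraction, $P_S\Psi$ is again Bessel, so Proposition \ref{fra} applies: $P_S\Psi$ is a frame sequence with constants $\alpha,\beta$ if and only if $\sigma(G_{P_S\Psi}) \subseteq \{0\}\cup[\alpha,\beta]$. Finally, for any bounded operator $W$ the products $W^*W$ and $WW^*$ share the same nonzero spectrum; applied to $W = G_{\Phi,\Psi}$ this gives $\sigma^2(G_{\Phi,\Psi})\setminus\{0\} = \sigma(G_{\Phi,\Psi}G_{\Phi,\Psi}^*)\setminus\{0\} = \sigma(G_{P_S\Psi})\setminus\{0\}$.

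It then remains to assemble the pieces. The completeness condition $\sn P_S\Psi = S$ is precisely the one-to-one condition of part i) of Proposition \ref{reiny}, which by Theorem \ref{teinys} is equivalent to condition (i), $\dim(\text{range}(G_{\Phi,\Psi})) = \dim(S)$. The frame-sequence bounds are equivalent to $\sigma(G_{P_S\Psi})\subseteq\{0\}\cup[\alpha,\beta]$, and by the spectral identity above this is equivalent to $\sigma^2(G_{\Phi,\Psi})\subseteq\{0\}\cup[\alpha,\beta]$, i.e.\ condition (ii). Hence $P_S\Psi$ is a frame for $S$ with constants $\alpha,\beta$ exactly when (i) and (ii) hold, which by Proposition \ref{reiny} is the asserted stability. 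The step that needs the most care is the passage between $G_{\Phi,\Psi}^*G_{\Phi,\Psi}$ and $G_{\Phi,\Psi}G_{\Phi,\Psi}^*$: one must check that the two agree off $0$ and that whether or not $0$ belongs to either spectrum is immaterial to the containment in $\{0\}\cup[\alpha,\beta]$, so that condition (ii) is symmetric under this exchange.
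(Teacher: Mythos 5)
Your proposal is correct and follows essentially the same route as the paper's proof: both reduce stability to the statement that $P_S\Psi$ is a frame for $S$ via Proposition \ref{reiny}, establish the key identity $G_{P_S\Psi}=G_{\Phi,\Psi}G_{\Phi,\Psi}^*$ from $P_S=B_\Phi^*B_\Phi$, and then combine Proposition \ref{fra} with the equality of the nonzero spectra of $W^*W$ and $WW^*$, handling completeness through Theorem \ref{teinys}. The only difference is organizational (you run a single chain of equivalences by splitting ``frame for $S$'' into ``frame sequence'' plus ``complete,'' while the paper argues the two implications separately), which changes nothing of substance.
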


\begin{proof}

Let $W:\mathcal{H}\rightarrow \ell^2(I),$ be  the analysis operator associated to
$P_S\Psi$.
For $x \in \H$, the equation,
$$Wx=\{\langle x,P_{S}\psi_i\rangle\}_{i\in I}=
\{\langle P_{S}x,\psi_i\rangle\}_{i\in I}=AP_{S}x,$$
shows that $W=AP_{S}.$

Since $\Phi$ is a Parseval frame for $S$, by Proposition \ref{obpro1}, $P_S = B_{\Phi}^*B_{\Phi}^{}$ then,
\begin{equation}\label{cuenta}
G_{P_S\Psi}=WW^*=
AP_{S}P_{S}A^*
=AP_{S}A^*=AB_{\Phi}^*B_{\Phi}^{}A^*=G_{\Phi,\Psi}G_{\Phi,\Psi}^*.
\end{equation}

Let us assume first that $A$ is stable for $S$. Item i) follows from Theorem \ref{teinys}.
Now we prove ii).

Since  $ S$ is closed ($S$ is finite dimensional) then Proposition \ref{reiny} gives that
$P_S\Psi:=\{P_{S}\psi_i\}_{i\in I}$
is a frame for $S$ with constants $\f$ and $\beta$. Using Proposition \ref{fra}, we have,
$$\sigma (G_{P_S\Psi})\subset \{0\}\cup[\alpha,\beta].$$

So, by (\ref{cuenta}),
$$\sigma (G_{P_S\Psi})=\sigma (G_{\Phi,\Psi}G_{\Phi,\Psi}^*)\subset \{0\}\cup[\alpha,\beta].$$
Finally, since (see  \cite{Rud91})
$$\sigma (G_{\Phi,\Psi}^*G_{\Phi,\Psi})\subset \{0\}\cup\sigma (G_{\Phi,\Psi}G_{\Phi,\Psi}^*),$$
it follows that
$$
\sigma ^2(G_{\Phi,\Psi})\subset\{0\}\cup[\alpha,\beta].$$

Suppose now that i) and ii) hold. Recall that A is stable for $S$ with stability bounds $\alpha,\beta$ if and only if $P_S\Psi:=\{P_{S}\psi_i\}_{i\in I}$ is
a frame for $S$ with frame bounds $\alpha,\beta$.

By Theorem \ref{teinys}, condition i) implies that the sampling operator is one-to-one on $S$.
Thus, using Proposition \ref{reiny},
$P_S\Psi:=\{P_{S}\psi_i\}_{i\in I}$ is complete in $S.$

That $P_S\Psi:=\{P_{S}\psi_i\}_{i\in I}$  is a frame sequence is straightforward by ii),
(\ref{cuenta}) and Proposition \ref{fra}.
\end{proof}

\begin{remark}
As in the case of injectivity, we note that the condition of stability does not depend on the  chosen Parseval frame. That means, if condition i) and ii) in the previous theorem hold for a Parseval frame $\Phi$ for $S$, then they hold for any Parseval frame $\widetilde{\Phi}$ for $S$.
\end{remark}

Theorem \ref{teosta} applied to the union of subspaces gives:

\begin{theorem}\label{teo}
Let $\Psi=\{\psi_i\}_{i\in I}$ be a set of sampling vectors and for every $\g,\ti\in\Gamma$, let $\Phi_{\g,\ti}$ be a Parseval frame for $S_{\g,\ti}$.

The sequence $\Psi$ provides a stable sampling operator for $\chi$  with constants $\alpha$ and $\beta$ if and only if
\begin{enumerate}
\item [i)] $\dim(\textnormal{range}(G_{\Phi_{\g,\ti},\Psi}))=\dim(S_{\g,\ti})$ for all $\g,\ti\in \Gamma$ and
\item [ii)] $\sigma^2(G_{\Phi_{\g,\ti},\Psi})\subseteq\{0\}\cup[\alpha,\beta]$  for all $\g,\ti\in\Gamma.$
\end{enumerate}
\end{theorem}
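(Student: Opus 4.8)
The plan is to reduce Theorem~\ref{teo} to the single-subspace case already settled in Theorem~\ref{teosta}, using the characterization of stability on a union that was assembled in Section~\ref{section3}. The bridge is part~ii) of Proposition~\ref{proposition-dl}, which asserts that $A$ is stable on $\chi$ with constants $\alpha$ and $\beta$ if and only if $A$ is stable on each $S_{\g,\ti}$ with the \emph{same} constants $\alpha$ and $\beta$. This is precisely the step that converts a statement about the nonlinear set $\chi$ into a conjunction of statements about the individual finite-dimensional subspaces $S_{\g,\ti}=S_\gamma+S_\theta$, each of which is closed because it is finite-dimensional.

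First I would fix a pair $\g,\ti\in\Gamma$ and observe that $S_{\g,\ti}$ is a finite-dimensional subspace of $\mathcal H$ and $\Phi_{\g,\ti}$ is, by hypothesis, a Parseval frame for it. Thus Theorem~\ref{teosta} applies verbatim with $S=S_{\g,\ti}$ and $\Phi=\Phi_{\g,\ti}$: the sequence $\Psi$ provides a stable sampling operator on $S_{\g,\ti}$ with constants $\alpha,\beta$ if and only if
$$\dim\bigl(\textnormal{range}(G_{\Phi_{\g,\ti},\Psi})\bigr)=\dim(S_{\g,\ti})\quad\text{and}\quad\sigma^2(G_{\Phi_{\g,\ti},\Psi})\subseteq\{0\}\cup[\alpha,\beta].$$
These are exactly conditions~i) and~ii) of the theorem, now localized at the single pair $(\g,\ti)$.

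The proof then closes by quantifying over all pairs. For the forward direction, assuming $\Psi$ is stable on $\chi$ with constants $\alpha,\beta$, Proposition~\ref{proposition-dl}~ii) yields stability on every $S_{\g,\ti}$ with the same $\alpha,\beta$, and Theorem~\ref{teosta} then delivers i) and ii) for each $\g,\ti$. Conversely, if i) and ii) hold for all $\g,\ti$, then Theorem~\ref{teosta} gives stability on each $S_{\g,\ti}$ with constants $\alpha,\beta$, and Proposition~\ref{proposition-dl}~ii) reassembles these into stability on $\chi$ with the same constants. The crucial point that makes the two directions match up is the uniformity of the constants: because Proposition~\ref{proposition-dl}~ii) preserves the \emph{identical} pair $\alpha,\beta$ across all subspaces rather than merely asserting some pair of bounds, the spectral containment $\sigma^2(G_{\Phi_{\g,\ti},\Psi})\subseteq\{0\}\cup[\alpha,\beta]$ can be demanded with the same interval for every $\g,\ti$.

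I expect no genuine obstacle here, since every ingredient is already in place; the only point requiring care is precisely this bookkeeping of constants. One must verify that the equivalence is being applied with a uniform $[\alpha,\beta]$ on both sides and not conflated with a merely pairwise-existential notion of stability, as the whole content of the statement lies in the shared bounds rather than in the existence of bounds for each individual pair.
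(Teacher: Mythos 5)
Your proof is correct and follows exactly the route the paper intends: the paper states Theorem \ref{teo} as an immediate consequence of applying Theorem \ref{teosta} to each finite-dimensional (hence closed) subspace $S_{\g,\ti}$ and gluing via Proposition \ref{proposition-dl}~ii), which is precisely your argument, including the observation that the constants $\alpha,\beta$ must be uniform across all pairs.
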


For examples and existence of sequences  $\Psi$ which verify the conditions of injectivity or stability in a union of finite-dimensional subspaces, we refer the reader to \cite{BD07} and \cite{DL08}.

\section{Sampling in a union of finitely generated shift-invariant spaces}

In this section we will consider the case of the Hilbert space   $\mathcal{H}=L^2(\R^n)$ and
finitely generated shift-invariant spaces (FSISs). That is, we will study sampling in a union of FSISs.

We will first review some properties of these spaces. For a detailed treatment see  \cite{BDVR94,Bow00, Hel64, RS95} and the references therein.

\subsection{Some basic facts about shift-invariant spaces }

We use the Fourier transform defined by $$\hat{f}(\w)=\int_{\R^n}f(x)\,e^{-2\pi i\w x}\,dx$$
for $f\in L^1(\R^n)$, and extended to be a unitary operator on $L^2(\R^n)$.

$\mathbb{T}^n=\R^n/\Z^n$ is the torus group, identified with $[0,1)^n$.

The translation by $k\in\Z^n$ is denoted by $t_k f:=f(\cdot-k).$

\begin{definition}
A closed subspace $S\subset L^2(\R^n)$ is a {\it shift-invariant space} (SIS) if $f\in S$ implies $t_k f\in S$ for any $k\in\Z^n.$

Given $\Phi$ a set of functions in $L^2(\R^n)$, we denote by $E(\Phi)$ the set,
$$E(\Phi):=\{t_k\phi:  k\in\Z^n, \phi \in \Phi \}.$$

The SIS generated by this set is $$S(\Phi):=\sn(E(\Phi)).$$
If $S=S(\Phi)$ for some finite set $\Phi$ we say that $S$ is a \textit{finitely generated shift-invariant space} (FSIS).

If $S$ is an FSIS, we call  the {\it length} of S, the cardinality of a smallest generating
set for S, and write
$$\mbox{len}(S):=\min\{\#\Phi\,:\,S=S(\Phi)\}.$$
\end{definition}

Although the FSISs are infinite-dimensional subspaces, most of their pro\-perties can be translated into properties on the
fibers of the spanning sets. That allows to work with finite-dimensional subspaces of  $\ell^2(\Z^n)$. We will
give the definition and some properties of the fibers.

The Hilbert space of square integrable vector functions $L^2(\mathbb{T}^n,\ell^2(\Z^n))$,
consists of all vector valued measurable functions $F:\mathbb{T}^n\rightarrow \ell^2(\Z^n)$ such that
$$\|F\|:=\Big(\int_{\mathbb{T}^n}\|F(x)\|^2_{\ell^2}\,dx\Big)^{\frac{1}{2}},$$is finite.

\begin{proposition}
The function $\tau:L^2(\R^n)\rightarrow L^2(\mathbb{T}^n,\ell^2(\Z^n))$ defined for $f\in L^2(\R^n)$ by
$$\tau f(\w):=\{\hat{f}(\w+k)\}_{k\in\Z^n},$$
is an isometric isomorphism between  $L^2(\R^n)$ and $L^2(\mathbb{T}^n,\ell^2(\Z^n))$.

The sequence  $\{\hat{f}(\w+k)\}_{k\in\Z^n}$  is called the {\it fiber} of $f$ at $\w$.
\end{proposition}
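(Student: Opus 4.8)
The plan is to factor $\tau$ through the Fourier transform. Since $\hat{\cdot}\colon L^2(\R^n)\to L^2(\R^n)$ is unitary, hence already an isometric isomorphism, it suffices to show that the \emph{periodization map}
$$P\colon L^2(\R^n)\to L^2(\mathbb{T}^n,\ell^2(\Z^n)),\qquad (Pg)(\w):=\{g(\w+k)\}_{k\in\Z^n}$$
is an isometric isomorphism; then $\tau=P\circ\hat{\cdot}$ inherits the property, and linearity of $\tau$ is immediate from linearity of the two factors. The geometric fact underlying everything is that the translated cubes $\{[0,1)^n+k\}_{k\in\Z^n}$ tile $\R^n$ up to a set of measure zero.

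First I would prove that $P$ (equivalently $\tau$) is an isometry. For $g\in L^2(\R^n)$, splitting the integral over the tiling and applying Tonelli's theorem (all integrands are nonnegative) gives
$$\int_{\R^n}|g(\w)|^2\,d\w=\sum_{k\in\Z^n}\int_{[0,1)^n}|g(\w+k)|^2\,d\w=\int_{\mathbb{T}^n}\sum_{k\in\Z^n}|g(\w+k)|^2\,d\w=\int_{\mathbb{T}^n}\|(Pg)(\w)\|^2_{\ell^2}\,d\w.$$
This single computation does double duty: the finiteness of the left side forces $\{g(\w+k)\}_{k\in\Z^n}\in\ell^2(\Z^n)$ for almost every $\w$, so $Pg$ is a bona fide element of the target space, and it shows $\|Pg\|=\|g\|_{L^2}$, giving the isometry and in particular injectivity.

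Second I would establish surjectivity. Given $F\in L^2(\mathbb{T}^n,\ell^2(\Z^n))$ with components $F(\w)=\{F_k(\w)\}_{k\in\Z^n}$, I define $g\colon\R^n\to\C$ by $g(\w+k):=F_k(\w)$ for $\w\in[0,1)^n$ and $k\in\Z^n$; this is well-defined because the cubes tile $\R^n$. Running the displayed identity in reverse yields $\|g\|_{L^2}^2=\|F\|^2<+\infty$, so $g\in L^2(\R^n)$ with $Pg=F$, and then $f:=(\hat{\cdot})^{-1}g$ satisfies $\tau f=F$.

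The only genuinely delicate point, and the step I expect to require care rather than ingenuity, is the measurability bookkeeping: in the forward direction one must check that $\w\mapsto\{g(\w+k)\}_{k}$ is strongly measurable as an $\ell^2(\Z^n)$-valued map, and in the reverse direction that the function $g$ assembled from the scalar components $F_k$ is Lebesgue measurable on $\R^n$. Both reduce to the identification of $L^2(\mathbb{T}^n,\ell^2(\Z^n))$ with measurable functions on the tiling: each $F_k$ is a measurable scalar function on $[0,1)^n$, and a countable family of such functions glues to a measurable function on $\R^n$ via the disjoint translates, which is exactly what justifies the use of Tonelli's theorem above.
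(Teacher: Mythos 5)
Your argument is correct and is the standard fiberization proof: factor $\tau$ through the unitary Fourier transform, use the tiling of $\R^n$ by $\{[0,1)^n+k\}_{k\in\Z^n}$ together with Tonelli to get the isometry, and glue components to get surjectivity, with the measurability issues correctly identified as routine (component measurability plus separability of $\ell^2(\Z^n)$). The paper itself gives no proof of this proposition, deferring to the references \cite{BDVR94,Bow00,Hel64,RS95}, where essentially this same argument appears.
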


\begin{definition}\label{def-range}
A \textit{range function} is a mapping
$$J:\mathbb{T}^n\rightarrow\{\text{closed subspaces of } \ell^2(\Z^n)\}.$$
$J$ is measurable if the operator valued function of the orthogonal projections $\w\mapsto P_{J(\w)}$ is weakly measurable.

Note that in a separable Hilbert space measurability is equivalent to weak measurability. Therefore, the measurability of $J$
is equivalent to $\w\mapsto P_{J(\w)}(a)$ being vector measurable for each $a\in\ell^2(\Z^n)$, or
$\w\mapsto P_{J(\w)}(F(\w))$ being vector measurable for each fixed vector measurable function
$F:\mathbb{T}^n\rightarrow \ell^2(\Z^n)$.
\end{definition}

\begin{proposition}\label{fun}
A closed subspace $S\subset L^2(\R^n)$ is shift-invariant if and only if
$$
S=\{f\in L^2(\R^n)\,:\,\tau f (\w)\in J_S(\w)\text{ for a.e. }\w\in\mathbb{T}^n\},
$$
where $J_S$ is a measurable range function. The correspondence between $S$ and $J_S$ is one-to-one.

Moreover, if $S=S(\Phi)$ for some countable set $\Phi\subset L^2(\R^n)$, then
$$J_S(\w)=\overline{\textnormal{span}}\{\tau\phi (\w)\,:\,\phi\in\Phi\}\quad\text{for a.e. }\w\in\mathbb{T}^n.$$ The subspace $J_S(\w)$ is called
 the \textit{fiber space} of $S$ at $\w$.
\end{proposition}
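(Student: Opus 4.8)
The plan is to transport everything through the isometry $\tau$ and reduce the statement to a structural description of those closed subspaces of $L^2(\mathbb{T}^n,\ell^2(\Z^n))$ that are stable under a natural module action. The linchpin is the behaviour of translation under $\tau$: since $\widehat{t_k f}(\w)=e^{-2\pi i k\w}\hat f(\w)$ and $e^{-2\pi i k(\w+j)}=e^{-2\pi i k\w}$ for $j\in\Z^n$, one gets $\tau(t_k f)(\w)=e^{-2\pi i k\w}\,\tau f(\w)$. Thus translation by $k$ corresponds to multiplication of the fiber by the unimodular scalar $e_k(\w):=e^{-2\pi i k\w}$. First I would show that $S$ is shift-invariant if and only if $M:=\tau(S)$ is invariant under multiplication by every $\phi\in L^\infty(\mathbb{T}^n)$: invariance under the $e_k$ is immediate, finite linear combinations give invariance under trigonometric polynomials, and a general $\phi\in L^\infty$ is reached by approximating it boundedly and a.e.\ by trigonometric polynomials $p_j$, so that $p_jF\to\phi F$ in $L^2(\mathbb{T}^n,\ell^2(\Z^n))$ by dominated convergence and the closedness of $M$ finishes the job.

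Next I would build the range function. Using separability, fix a sequence $\{F_n\}_{n}$ dense in $M$ and set $J_S(\w):=\sn\{F_n(\w):n\in\N\}$; measurability of $J_S$ follows from the measurability of each $\w\mapsto F_n(\w)$. The inclusion $M\subseteq M_{J_S}:=\{F:F(\w)\in J_S(\w)\ \text{a.e.}\}$ is the easy half: given $F\in M$, pick $F_{n_k}\to F$ in norm, pass to an a.e.\ convergent subsequence, and use that each $F_{n_k}(\w)\in J_S(\w)$ and $J_S(\w)$ is closed.

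The substantive inclusion is $M_{J_S}\subseteq M$, and this is where the module structure does the work. Given $F\in M_{J_S}$, decompose $F=F_M+G$ with $F_M\in M$ and $G\perp M$; since $M\subseteq M_{J_S}$ one has $G\in M_{J_S}$, so it suffices to prove $G=0$. For each $n$ and each $\phi\in L^\infty(\mathbb{T}^n)$ the function $\phi F_n$ lies in $M$, so $0=\langle G,\phi F_n\rangle=\int_{\mathbb{T}^n}\overline{\phi(\w)}\,\langle G(\w),F_n(\w)\rangle\,d\w$; letting $\phi$ range over $L^\infty$ forces $\langle G(\w),F_n(\w)\rangle=0$ a.e., for every $n$. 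Hence $G(\w)\perp J_S(\w)$ a.e., while $G\in M_{J_S}$ gives $G(\w)\in J_S(\w)$ a.e., so $G(\w)=0$ a.e. This proves $M=M_{J_S}$, and pulling back through $\tau$ yields the claimed description of $S$.

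For the one-to-one correspondence I would show that any measurable range function $J$ with $M_J=M$ agrees a.e.\ with $J_S$: the inclusion $J_S(\w)\subseteq J(\w)$ a.e.\ is immediate since each $F_n\in M_J$, and the reverse is obtained by assuming $J(\w)\not\subseteq J_S(\w)$ on a set of positive measure and producing, via a measurable selection, a nonzero $G\in M_J=M$ with $G(\w)\in J(\w)\ominus J_S(\w)$, contradicting $M\subseteq M_{J_S}$. Finally, for $S=S(\Phi)$, the description of $\tau$ on translates shows that $M=\tau(S)$ is the closed module generated by $\{\tau\phi:\phi\in\Phi\}$; taking the dense family $\{F_n\}$ to consist of rational combinations of the $e_k\,\tau\phi$ gives $\sn\{F_n(\w)\}=\sn\{\tau\phi(\w):\phi\in\Phi\}$ a.e., which identifies $J_S(\w)$ with the asserted span. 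I expect the two genuinely technical points to be the measurability of $J_S$ and especially the measurable selection in the uniqueness argument; the rest is bookkeeping around the translation-to-modulation identity.
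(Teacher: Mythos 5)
The paper offers no proof of Proposition \ref{fun}: it is quoted as a classical structure theorem with pointers to Helson, de Boor--DeVore--Ron, Ron--Shen and Bownik, so there is no internal argument to compare against. Your proposal is essentially the standard Helson-type proof from those references, and it is sound: the identity $\tau(t_kf)(\w)=e^{-2\pi i k\w}\tau f(\w)$, the passage to the doubly invariant subspace $M=\tau(S)$, the definition of $J_S(\w)$ as the fiberwise closed span of a dense sequence, the easy inclusion $M\subseteq M_{J_S}$ via an a.e.\ convergent subsequence, the orthogonality argument for $M_{J_S}\subseteq M$, and the uniqueness of the range function are exactly the right skeleton, and the ``moreover'' clause does follow by choosing the dense family inside the module generated by $\{\tau\phi\}_{\phi\in\Phi}$ together with the a.e.\ uniqueness of $J_S$. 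Two remarks on economy. First, you do not need the full $L^\infty(\mathbb{T}^n)$-module structure (and hence no bounded a.e.\ approximation of $L^\infty$ functions by trigonometric polynomials): in the key step it suffices to test against $\phi=e_k$, since $\w\mapsto\langle G(\w),F_n(\w)\rangle$ is in $L^1(\mathbb{T}^n)$ by Cauchy--Schwarz and an $L^1$ function with all Fourier coefficients zero vanishes a.e. Second, the measurable selection in your uniqueness argument can be avoided: for a countable dense set $\{a_m\}\subset\ell^2(\Z^n)$ the bounded measurable sections $\w\mapsto P_{J(\w)}a_m$ lie in $M_J=M\subseteq M_{J_S}$, hence $P_{J(\w)}a_m\in J_S(\w)$ a.e.\ for all $m$, and density of $\{P_{J(\w)}a_m\}_m$ in $J(\w)$ gives $J(\w)\subseteq J_S(\w)$ a.e.\ directly. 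The one technical point you correctly flag and should not gloss over is the measurability of $\w\mapsto P_{J_S(\w)}$, which is obtained by a measurable Gram--Schmidt process applied to $\{F_n(\w)\}_n$.
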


\begin{proposition}\label{proy}
Let $S$ be a SIS of $L^2(\R^n)$ and $f\in L^2(\R^n),$ then
$$\tau(P_S f)(\w)=P_{J_S(\w)}(\tau f (\w))\quad\mbox{ for a.e. }\w\in\mathbb{T}^n.$$
\end{proposition}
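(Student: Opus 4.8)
The plan is to transport the statement to the fiber side via the isometric isomorphism $\tau$ and reduce it to the uniqueness of orthogonal projections. Define the fiberwise operator
$$Q:L^2(\mathbb{T}^n,\ell^2(\Z^n))\to L^2(\mathbb{T}^n,\ell^2(\Z^n)),\qquad (QF)(\w):=P_{J_S(\w)}(F(\w)).$$
My goal is to show that $Q=\tau P_S\tau^{-1}$; evaluating both sides at $\tau f$ then gives $\tau(P_Sf)(\w)=P_{J_S(\w)}(\tau f(\w))$ for a.e.\ $\w$, which is exactly the claim. Since $\tau$ is a unitary (isometric isomorphism), $\tau P_S\tau^{-1}$ is the orthogonal projection of $L^2(\mathbb{T}^n,\ell^2(\Z^n))$ onto $\tau(S)$, so it suffices to prove that $Q$ is the orthogonal projection onto $\tau(S)$ as well.

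First I would check that $Q$ is a well-defined bounded operator. The pointwise estimate $\|P_{J_S(\w)}(F(\w))\|_{\ell^2}\le\|F(\w)\|_{\ell^2}$ shows $QF\in L^2(\mathbb{T}^n,\ell^2(\Z^n))$ with $\|Q\|\le 1$, provided the map $\w\mapsto P_{J_S(\w)}(F(\w))$ is measurable; this measurability is exactly what the measurability of the range function $J_S$ supplies (Definition \ref{def-range}). Next, since each $P_{J_S(\w)}$ is an orthogonal projection of $\ell^2(\Z^n)$, one has $P_{J_S(\w)}^2=P_{J_S(\w)}$ and $P_{J_S(\w)}^*=P_{J_S(\w)}$ for every $\w$; passing from these pointwise identities to the operator level gives $Q^2=Q$ and $Q^*=Q$, so $Q$ is itself an orthogonal projection and therefore coincides with the orthogonal projection onto its own range.

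It then remains to identify $\mathrm{range}(Q)$. By construction $F\in\mathrm{range}(Q)$ if and only if $F(\w)=P_{J_S(\w)}(F(\w))\in J_S(\w)$ for a.e.\ $\w$, that is,
$$\mathrm{range}(Q)=\{F\in L^2(\mathbb{T}^n,\ell^2(\Z^n)):F(\w)\in J_S(\w)\text{ for a.e. }\w\}.$$
By Proposition \ref{fun} this set is precisely $\tau(S)$. Hence $Q$ and $\tau P_S\tau^{-1}$ are both the orthogonal projection onto $\tau(S)$, and by uniqueness of orthogonal projections they are equal, which finishes the argument.

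The main obstacle is the measurability bookkeeping: one must verify that $\w\mapsto P_{J_S(\w)}(F(\w))$ genuinely defines an element of $L^2(\mathbb{T}^n,\ell^2(\Z^n))$, and that the pointwise relations $P_{J_S(\w)}^2=P_{J_S(\w)}$ and $P_{J_S(\w)}^*=P_{J_S(\w)}$ indeed yield the operator identities $Q^2=Q$ and $Q^*=Q$ (the latter via $\langle QF,G\rangle=\int_{\mathbb{T}^n}\langle P_{J_S(\w)}F(\w),G(\w)\rangle\,d\w=\langle F,QG\rangle$). Both points rest on the weak measurability of $\w\mapsto P_{J_S(\w)}$ built into Definition \ref{def-range}; the remaining algebraic part is routine.
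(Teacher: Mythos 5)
The paper states Proposition \ref{proy} without proof, citing it as a standard fact from the shift-invariant space literature (\cite{Hel64}, \cite{Bow00}); your argument is correct and complete, and it is essentially the standard one: show that the fiberwise projection $Q$ is a bounded self-adjoint idempotent whose range is $\tau(S)$, then invoke uniqueness of the orthogonal projection onto $\tau(S)$ to conclude $Q=\tau P_S\tau^{-1}$. You also correctly identify the only non-routine point, namely that the measurability of $\w\mapsto P_{J_S(\w)}(F(\w))$ is exactly what Definition \ref{def-range} guarantees, so nothing is missing.
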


\begin{definition}
Given $S$ a SIS of $L^2(\R^n)$, the  {\it dimension function} associated to $S$ is defined by
$$\dim_S:\mathbb{T}^n\rightarrow\N_0\cup\{\infty\},\quad\dim_S(\w)=\dim(J_S(\w)).$$
\end{definition}
Here $\N_0$ denotes the set of non-negative integers.

The next two theorems characterize Bessel sequences, frames and Riesz bases of translates in terms of fibers.
\begin{theorem}\label{grami5}
Let $\Phi$ be a countable subset of $L^2(\R^n)$. The following are equi\-valent.
\begin{enumerate}
\item[i)] $E(\Phi)$ is a Bessel sequence in $L^2(\R^n)$ with constant $\beta.$
\item[ii)] $\tau\Phi (\w):=\{\tau\phi (\w)\,:\,\phi\in\Phi\}$ is a Bessel sequence in $\ell^2(\Z^n)$ with constant $\beta$ for a.e. $\w\in\mathbb{T}^n.$
\end{enumerate}
\end{theorem}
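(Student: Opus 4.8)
The plan is to transport the Bessel condition for $E(\Phi)$ through the isometry $\tau$ and reduce it to a fiberwise statement. The crucial computation is the effect of integer translations on fibers: since $\widehat{t_k\phi}(\w)=e^{-2\pi i k\w}\hat{\phi}(\w)$ and $e^{-2\pi i kj}=1$ for $k,j\in\Z^n$, one obtains $\tau(t_k\phi)(\w)=e^{-2\pi i k\w}\,\tau\phi(\w)$. Using that $\tau$ is an isometric isomorphism, I would then write, for fixed $f\in L^2(\R^n)$ and $\phi\in\Phi$,
\[
\langle f,t_k\phi\rangle=\int_{\mathbb{T}^n}e^{2\pi i k\w}\,\langle \tau f(\w),\tau\phi(\w)\rangle_{\ell^2}\,d\w,
\]
which identifies $\langle f,t_k\phi\rangle$ with a Fourier coefficient of $F_\phi(\w):=\langle\tau f(\w),\tau\phi(\w)\rangle_{\ell^2}$, a function in $L^1(\mathbb{T}^n)$ by Cauchy--Schwarz.

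Next I would apply Parseval's identity for Fourier series on $\mathbb{T}^n$ to get, for each $\phi$, the identity $\sum_{k\in\Z^n}|\langle f,t_k\phi\rangle|^2=\int_{\mathbb{T}^n}|\langle\tau f(\w),\tau\phi(\w)\rangle_{\ell^2}|^2\,d\w$, with both sides simultaneously finite or infinite. Summing over $\phi\in\Phi$ and interchanging sum and integral by Tonelli (all terms are non-negative) yields the master identity
\[
\sum_{\phi\in\Phi}\sum_{k\in\Z^n}|\langle f,t_k\phi\rangle|^2=\int_{\mathbb{T}^n}\Big(\sum_{\phi\in\Phi}|\langle\tau f(\w),\tau\phi(\w)\rangle_{\ell^2}|^2\Big)\,d\w .
\]
The left-hand side is $\|B_{E(\Phi)}f\|^2_{\ell^2}$ and $\|f\|^2=\int_{\mathbb{T}^n}\|\tau f(\w)\|^2_{\ell^2}\,d\w$, so this is the engine for both implications. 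The direction ii)$\Rightarrow$i) is then immediate: assuming each fiber system is Bessel with constant $\beta$ for a.e.\ $\w$, I plug $a=\tau f(\w)$ into the fiber inequality, integrate over $\mathbb{T}^n$, and read off $\|B_{E(\Phi)}f\|^2\le\beta\|f\|^2$ from the master identity.

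The harder direction, and the main obstacle, is i)$\Rightarrow$ii), where I must pass from an integrated inequality to a pointwise a.e.\ one. My plan is to define the optimal fiber Bessel bound
\[
h(\w):=\sup_{\|a\|_{\ell^2}=1}\;\sum_{\phi\in\Phi}|\langle a,\tau\phi(\w)\rangle|^2\in[0,\infty]
\]
and to prove $h(\w)\le\beta$ a.e. First I would check that $h$ is measurable by writing it as $\sup_{a\in D}\sum_{\phi}|\langle a,\tau\phi(\w)\rangle|^2$ over a countable dense subset $D$ of the unit sphere of $\ell^2(\Z^n)$; this is legitimate because the summand is lower semicontinuous in $a$, so the supremum over $D$ equals the supremum over the whole sphere.

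Then I would argue by contradiction: if $E_m:=\{\w:h(\w)>\beta+1/m\}$ had positive measure for some $m$, I would produce a measurable unit-vector field $\w\mapsto a(\w)$ with $\sum_{\phi}|\langle a(\w),\tau\phi(\w)\rangle|^2>\beta+1/m$ on $E_m$, obtained simply by partitioning $E_m$ into the countably many measurable pieces on which a fixed element of $D$ works, thereby sidestepping any deep measurable-selection theorem. Setting $f:=\tau^{-1}(\mathbf 1_{E_m}a)$, which lies in $L^2(\R^n)$ since $a$ is bounded and supported on $E_m$, the master identity gives $\|B_{E(\Phi)}f\|^2>(\beta+1/m)|E_m|=(\beta+1/m)\|f\|^2$, contradicting i). Hence $h\le\beta$ a.e., which is exactly ii). The only delicate points are the semicontinuity/measurability justification and ensuring the constructed $f$ is genuinely in $L^2$, both controlled by the localized, bounded choice of $a(\w)$.
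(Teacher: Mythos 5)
Your proof is correct; the paper itself states this theorem without proof, citing the shift-invariant-space literature (de Boor--DeVore--Ron, Bownik, Ron--Shen), and your argument is precisely the standard fiberization proof found there: the Plancherel identity $\sum_{k}|\langle f,t_k\phi\rangle|^2=\int_{\mathbb{T}^n}|\langle\tau f(\w),\tau\phi(\w)\rangle|^2\,d\w$ plus Tonelli for the easy direction, and a countable partition of $\{h>\beta+1/m\}$ according to which element of a dense subset of the sphere witnesses the excess, for the a.e.\ direction. The only delicate points --- that Parseval for the $L^1(\mathbb{T}^n)$ function $F_\phi$ holds with both sides simultaneously finite or infinite, and that the fiberwise optimal Bessel bound $h$ is measurable because the summed quantity is lower semicontinuous in $a$ so the supremum may be taken over a countable dense subset --- are both identified and handled correctly.
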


Let $\Phi = \{\phi_j\}_{j\in J}$ be a countable set of functions in $L^2(\R^n)$ such that $E(\Phi)$ is a Bessel sequence. The Gramian of $\Phi$ at $\w\in\mathbb{T}^n$ is
$$\G_{\Phi}(\w):\ell^2(J)\rightarrow \ell^2(J), \quad
(\G_{\Phi}(\w))_{i,j}=\langle \tau \phi_j(\w),\tau\phi_i(\w)\rangle_{\ell^2(\Z^n)}\quad\forall\,i,j\in J.$$
Note that $\G_{\Phi}(\w)$ is the Gramian operator associated to  the Bessel sequence $\{\tau\phi_j(\w)\}_{j\in J}$ in $\ell^2(\Z^n)$, that is  $\G_{\Phi}(\w)= G_{\tau\Phi(\w)},$ (see Definition \ref{Gramian}).

\begin{theorem}\label{grami3}
Let $S=S(\Phi)$, where $\Phi$ is a countable subset of $L^2(\R^n)$. Then the following holds:
\begin{enumerate}
\item[i)] $E(\Phi)$ is a frame for $S$ with constants $\alpha$ and $\beta$ if and only if
$\tau\Phi(\w)$ is a frame for $J_S(\w)$ with constants $\alpha$ and $\beta$ for a.e. $\w\in\mathbb{T}^n.$
\item[ii)] $E(\Phi)$ is a Riesz basis for $S$ with constants $\alpha$ and $\beta$ if and only if
$\tau\Phi(\w)$ is a Riesz basis for $J_S(\w)$ with constants $\alpha$ and $\beta$ for a.e. $\w\in\mathbb{T}^n.$

Furthermore, if $\Phi$ is finite, $S$ has a Riesz basis of translates if and only if the dimension
function associated to $S$ is constant a.e. $\w\in\mathbb{T}^n.$
\end{enumerate}
\end{theorem}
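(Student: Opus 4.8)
The plan is to transport both statements to the fiber spaces through the isometry $\tau$ and the fundamental fiberization identity, and then to pass between a global (integrated) inequality and its pointwise (a.e.\ in $\w$) version by a localization argument. The computational core is that, for $f,\phi\in L^2(\R^n)$, the numbers $\langle f,t_k\phi\rangle$ are the Fourier coefficients of the $\Z^n$-periodic function $\w\mapsto\langle\tau f(\w),\tau\phi(\w)\rangle_{\ell^2(\Z^n)}$: Plancherel turns $t_k$ into modulation by $e^{-2\pi i\w k}$, and periodizing the resulting integral over $\mathbb{T}^n$ gives $\langle f,t_k\phi\rangle=\int_{\mathbb{T}^n}\langle\tau f(\w),\tau\phi(\w)\rangle\,e^{2\pi i\w k}\,d\w$. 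Applying Parseval on $L^2(\mathbb{T}^n)$ and summing over $\phi\in\Phi$ (Tonelli, since the terms are nonnegative) yields the master identity
\begin{equation*}
\sum_{g\in E(\Phi)}|\langle f,g\rangle|^2=\int_{\mathbb{T}^n}\sum_{\phi\in\Phi}|\langle\tau f(\w),\tau\phi(\w)\rangle_{\ell^2}|^2\,d\w .
\end{equation*}
Combined with the isometry $\|f\|^2=\int_{\mathbb{T}^n}\|\tau f(\w)\|^2\,d\w$, this rewrites the frame inequalities for $E(\Phi)$ as integrated versions of the fiber frame inequalities for $\tau\Phi(\w)$ on $J_S(\w)$.

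For part i), the ``if'' direction is immediate: if $\tau\Phi(\w)$ is a frame for $J_S(\w)$ with bounds $\alpha,\beta$ a.e., then for $f\in S$ one has $\tau f(\w)\in J_S(\w)$ a.e.\ (Proposition \ref{fun}), so the pointwise bounds apply to $a=\tau f(\w)$ and integrate. For the ``only if'' direction the upper bound is free: a frame is Bessel with constant $\beta$, so Theorem \ref{grami5} gives that $\tau\Phi(\w)$ is Bessel with constant $\beta$ a.e. The lower bound is the delicate point. Assuming it failed on a set of positive measure, I would, for some $\e>0$, select a \emph{measurable} unit-vector field $\w\mapsto a(\w)\in J_S(\w)$ on a positive-measure set $\Omega$ with $\sum_\phi|\langle a(\w),\tau\phi(\w)\rangle|^2\le\alpha-\e$ there, and set $\tau f(\w)=a(\w)\mathbf{1}_\Omega(\w)$; this $f$ lies in $S$, and the master identity gives $\sum_{g\in E(\Phi)}|\langle f,g\rangle|^2\le(\alpha-\e)|\Omega|<\alpha\|f\|^2$, contradicting the lower frame bound. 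Since $J_S(\w)=\sn\{\tau\phi(\w):\phi\in\Phi\}$ by Proposition \ref{fun}, the two fiber bounds say exactly that $\tau\Phi(\w)$ is a frame for $J_S(\w)$.

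For part ii) I would switch to the synthesis side. Writing a finite combination $\sum_{\phi,k}c_{\phi,k}t_k\phi$ and letting $m_\phi(\w)=\sum_k c_{\phi,k}e^{-2\pi i\w k}$ be the associated symbols, one gets $\tau(\sum_{\phi,k}c_{\phi,k}t_k\phi)(\w)=\sum_\phi m_\phi(\w)\tau\phi(\w)$, whence, with $m(\w)=(m_\phi(\w))_\phi$,
\begin{equation*}
\Big\|\sum_{\phi,k}c_{\phi,k}t_k\phi\Big\|^2=\int_{\mathbb{T}^n}\langle\G_{\Phi}(\w)m(\w),m(\w)\rangle\,d\w,\qquad \sum_{\phi,k}|c_{\phi,k}|^2=\int_{\mathbb{T}^n}\|m(\w)\|^2\,d\w .
\end{equation*}
Thus the Riesz-basis inequalities for $E(\Phi)$ become $\alpha\int\|m\|^2\le\int\langle\G_{\Phi}(\w)m,m\rangle\le\beta\int\|m\|^2$ for every $m\in L^2(\mathbb{T}^n,\C^{\#\Phi})$; testing with $m(\w)=v\,\mathbf{1}_E(\w)$ for arbitrary measurable $E$ and fixed $v$, and running $v$ over a countable dense set, this holds iff $\alpha\|v\|^2\le\langle\G_{\Phi}(\w)v,v\rangle\le\beta\|v\|^2$ for all $v$ a.e.\ $\w$. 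Recognizing $\langle\G_{\Phi}(\w)v,v\rangle=\|\sum_i v_i\tau\phi_i(\w)\|_{\ell^2}^2$, this is precisely the Riesz-basis inequality for $\tau\Phi(\w)$, whose span is $J_S(\w)$ by Proposition \ref{fun}; (Proposition \ref{fra} gives the companion frame-sequence viewpoint).

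Finally, for the ``furthermore'' with $\Phi$ finite: if $E(\Psi)$ is a Riesz basis of translates for $S$, then by part ii) $\tau\Psi(\w)$ is a Riesz basis --- in particular a linearly independent spanning set --- of $J_S(\w)$ a.e., forcing $\dim_S(\w)=\#\Psi$ to be constant a.e. Conversely, if $\dim_S\equiv N$ is constant a.e., I would invoke a measurable selection theorem for the measurable range function $J_S$ (Definition \ref{def-range}) to choose an orthonormal basis $\{e_1(\w),\dots,e_N(\w)\}$ of $J_S(\w)$ depending measurably on $\w$, and define $\psi_i$ by $\tau\psi_i(\w)=e_i(\w)$; since each $e_i(\w)$ is a unit vector, $\psi_i\in L^2(\R^n)$, and $\G_\Psi(\w)=I_N$ a.e., so by part ii) with $\alpha=\beta=1$ the set $E(\Psi)$ is an orthonormal (hence Riesz) basis of translates for $S(\Psi)=S$. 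The main obstacle is exactly these two measurable-selection steps --- the violating unit-vector field in the converse of part i) and the orthonormal frame field in the converse of the last assertion --- which rely on the measurability built into $J_S$; the passage from integrated to pointwise inequalities, by contrast, is the routine localization argument indicated above.
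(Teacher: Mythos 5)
The paper offers no proof of Theorem \ref{grami3}: it is quoted as known background, with the reader referred to the shift-invariant-space literature cited at the start of Section 5 (de Boor--DeVore--Ron, Ron--Shen, Bownik), so there is no internal argument to compare yours against. Your proof is essentially the standard one from those references, and it is correct in outline: the master identity $\sum_{g\in E(\Phi)}|\langle f,g\rangle|^2=\int_{\mathbb{T}^n}\sum_{\phi}|\langle \tau f(\w),\tau\phi(\w)\rangle|^2\,d\w$, together with the isometry $\|f\|^2=\int_{\mathbb{T}^n}\|\tau f(\w)\|^2\,d\w$, reduces i) to a localization argument, and the synthesis-side identity does the same for ii). Two remarks. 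First, the only genuinely delicate step --- which you correctly isolate --- is the measurable selection of a violating unit vector $a(\w)\in J_S(\w)$ on a set of positive measure. This can be done without invoking an abstract selection theorem: the normalized rational-coefficient finite combinations of $\{\tau\phi(\w)\}_{\phi\in\Phi}$ form a countable, measurably parametrized family that is dense in the unit sphere of $J_S(\w)$, the Bessel form $a\mapsto\sum_\phi|\langle a,\tau\phi(\w)\rangle|^2$ is continuous on it, and one selects the first index in a fixed enumeration that achieves a value below $\alpha-\e/2$; you should say this much, since otherwise the ``only if'' of i) has a hole exactly where you point. Second, the converse of the ``furthermore'' assertion follows more cheaply from Theorem \ref{RS} than from a measurable Gram--Schmidt: that theorem produces $\Phi_0$ with $\#\Phi_0=\textnormal{len}(S)$ and $E(\Phi_0)$ a Parseval frame for $S$, so $\tau\Phi_0(\w)$ is a Parseval frame of $\textnormal{len}(S)$ vectors for $J_S(\w)$ a.e.; if $\dim_S$ is constant it equals $\textnormal{len}(S)$ a.e., and a Parseval frame whose cardinality equals the dimension of the space it spans has identity Gramian, hence is an orthonormal basis, so $E(\Phi_0)$ is already an orthonormal (in particular Riesz) basis of translates. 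With these two points supplied, your argument is complete.
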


Let us remark here that if   $\Phi\subset L^2(\R^n)$ is a set of generators for a shift-invariant space $S$, that is $S=S(\Phi)$,
then the set $E(\Phi)$ does not need to be a frame for $S$, even for finitely generated SISs. However it is always true that there exists a set  of generators for $S$ such that its integer translates form a frame for $S.$ This is the result of the next theorem.

\begin{theorem}\label{RS}
Given $S$ a SIS of $L^2(\R^n)$, there exists a  subset $\Phi=\{\phi_j\}_{j\in J}\subset S$ such that
$E(\Phi)$ is a Parseval frame for $S$.
If S is finitely generated, the cardinal of $J$  can be chosen to be the length of $S.$
\end{theorem}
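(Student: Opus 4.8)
The plan is to pass to the fiber picture and reduce everything to a fiberwise statement. By the isometric isomorphism $\tau$ of Proposition with fiber space $J_S$, and by Theorem \ref{grami3} applied with $\alpha=\beta=1$, a family $\Phi=\{\phi_j\}_{j\in J}\subset S$ has $E(\Phi)$ a Parseval frame for $S$ if and only if $\{\tau\phi_j(\w)\}_{j\in J}$ is a Parseval frame for the fiber space $J_S(\w)$ for a.e.\ $\w\in\mathbb{T}^n$. So it suffices to produce measurable fields $\w\mapsto v_j(\w)\in J_S(\w)$, lying in $L^2(\mathbb{T}^n,\ell^2(\Z^n))$, whose values form a Parseval frame of $J_S(\w)$ for a.e.\ $\w$; then $\phi_j:=\tau^{-1}v_j\in S$ does the job.

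For the first (countable) assertion I would simply project a fixed orthonormal basis. Let $\{e_k\}_{k\in\Z^n}$ be the standard basis of $\ell^2(\Z^n)$ and set $v_k(\w):=P_{J_S(\w)}e_k$. A direct computation shows that for $a\in J_S(\w)$ one has $\sum_k|\langle a,P_{J_S(\w)}e_k\rangle|^2=\sum_k|\langle a,e_k\rangle|^2=\|a\|^2$, so $\{v_k(\w)\}_{k\in\Z^n}$ is a Parseval frame for $J_S(\w)$ for every $\w$ (this is the fiber analogue of Proposition \ref{obpro1}). Measurability of $\w\mapsto v_k(\w)$ is exactly the measurability of the range function $J_S$ in Definition \ref{def-range}, and since $\|v_k(\w)\|\le\|e_k\|=1$ we get $\int_{\mathbb{T}^n}\|v_k(\w)\|^2\,d\w\le 1<\infty$, so each field lies in $L^2(\mathbb{T}^n,\ell^2(\Z^n))$. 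Taking $J=\Z^n$ yields the desired Parseval frame of translates for an arbitrary SIS.

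For the second assertion I would sharpen the count to the length. Put $L:=\operatorname*{ess\,sup}_{\w}\dim_S(\w)$, which is finite when $S$ is finitely generated, and partition $\mathbb{T}^n$ (up to a null set) into the measurable sets $\Omega_m=\{\w:\dim_S(\w)=m\}$, $0\le m\le L$. On each $\Omega_m$ I would run a measurable Gram--Schmidt procedure on the frame field $\{v_k(\w)\}_{k\in\Z^n}$ constructed above: scanning the indices $k$ in a fixed order, select at each $\w$ the successive vectors that are not in the span of those already chosen and normalize them. Because each selection step partitions $\Omega_m$ into measurable pieces, this produces orthonormal measurable fields $u_1(\w),\dots,u_m(\w)$ spanning $J_S(\w)$; padding with zeros $u_{m+1}=\cdots=u_L=0$ gives $L$ measurable fields whose values form an orthonormal basis (hence a Parseval frame) of $J_S(\w)$ on $\Omega_m$. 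Gluing across the $\Omega_m$ yields fields $u_1,\dots,u_L$ with $\|u_j(\w)\|\le 1$, hence in $L^2(\mathbb{T}^n,\ell^2(\Z^n))$, and the corresponding $\phi_1,\dots,\phi_L\in S$ satisfy that $E(\{\phi_1,\dots,\phi_L\})$ is a Parseval frame for $S$.

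Finally I would verify optimality of the count. Since these $L$ functions generate $S$, we get $\operatorname{len}(S)\le L$. Conversely, if $\Psi$ is any generating set of $S$ then by Proposition \ref{fun} the fibers $\{\tau\psi(\w):\psi\in\Psi\}$ span $J_S(\w)$ for a.e.\ $\w$, so $\#\Psi\ge\dim_S(\w)$ a.e., whence $\#\Psi\ge L$ and $\operatorname{len}(S)\ge L$. Thus $\operatorname{len}(S)=L$ and the construction uses exactly $\operatorname{len}(S)$ generators. The main obstacle is the measurability of the fiberwise orthonormalization in the third paragraph: one must check that each Gram--Schmidt selection step defines measurable subsets of $\Omega_m$ and measurable vector fields, so that a genuine element of $L^2(\mathbb{T}^n,\ell^2(\Z^n))$ results; this is where a measurable selection argument (or an explicit index-scanning scheme) is needed, the remaining steps being routine.
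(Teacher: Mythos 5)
Your proof is correct. Note that the paper itself does not prove Theorem \ref{RS}: it is quoted as a known result from the literature on shift-invariant spaces (de Boor--DeVore--Ron \cite{BDVR94}, Ron--Shen \cite{RS95}, Bownik \cite{Bow00}), so there is no in-paper argument to compare against; what you have written is essentially a reconstruction of the standard proof. Your first step --- defining $v_k(\w)=P_{J_S(\w)}e_k$, checking that projections of an orthonormal basis form a Parseval frame of the fiber space, verifying measurability and square-integrability so that $\phi_k=\tau^{-1}v_k\in S$, and then invoking Theorem \ref{grami3} with $\alpha=\beta=1$ --- is exactly the classical construction, and you correctly confirm $S(\Phi)=S$ via Proposition \ref{fun}. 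For the finitely generated case, your identity $\operatorname{len}(S)=\operatorname*{ess\,sup}_\w \dim_S(\w)$ is precisely \cite[Theorem 3.5]{BDVR94}, and both inequalities are argued correctly (the lower bound because fibers of any generating set span $J_S(\w)$, the upper bound by your construction). The only genuinely delicate point is the one you flag yourself: the measurable Gram--Schmidt selection on each level set $\Omega_m$ of the dimension function. This does go through --- the sets where $v_k(\w)$ lies outside the span of previously selected fields are measurable (e.g.\ via Gram determinants), so $\Omega_m$ splits into countably many measurable pieces on which the selected indices are constant and the orthonormalization is given by explicit measurable formulas --- but a complete write-up would have to spell this out, as is done in \cite{Hel64} and \cite{Bow00}. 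With that caveat made explicit, the argument is complete.
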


Although a SIS always has a frame of translates, there are SISs which have no Riesz bases of translates.
For example, consider the shift-invariant space $S$ generated by $\phi\in L^2(\R)$, where $\hat{\phi}(\w)=\chi_{[0,\frac{1}{2})}(\w).$
Since  $\dim_S(\w)=1$ for a.e. $\w\in[0,\frac{1}{2})$ and $\dim_S(\w)=0$ for a.e. $\w\in[\frac{1}{2},1)$, it follows by Theorem \ref{grami3} that $S$ has no Riesz bases of translates.

\subsection{Sampling from a Union of FSIS}

In this section we will study the sampling problem for the case in which the signal belongs to the set,
\begin{equation}\label{unionSIS}
\chi:=\bigcup_{\gamma\in\Gamma}S_{\gamma},
\end{equation}
where $S_{\gamma}$ are FSISs of $L^2(\R^n)$.

In this setting, since our subspaces are shift-invariant, it is natural and also convenient that the sampling set will be the set of shifts from a fixed collection of functions in  $L^2(\R^n)$, that is, the sampling operator will be given by a sequence
of integer translates of certain functions.

Given $\Psi:=\{\psi_i\}_{i\in I}$ such that   $E(\Psi)$ is a Bessel sequence in $L^2(\R^n)$, we define the sampling operator associated to $E(\Psi)$ as
\begin{equation}\label{A}
A:L^2(\R^n)\rightarrow \ell^2(\Z^n\times I),\quad
Af=\{\langle f, t_k\psi_i\rangle\}_{i\in I, k\in\Z^n}.
\end{equation}
As we showed in Section \ref{section3} the conditions on  the sampling operator to be  one-to-one and stable in a union of subspaces can be established in terms  of one-to-one  and stability conditions on the sum of every two of the subspaces from the union.

However the condition that we have for the sampling operator to be one-to-one on a subspace, requires that the subspace is closed (Proposition \ref{reiny}).

Since the sum of two FSISs  is  not necessarily a closed subspace, the conditions should be imposed on the closure of the sum.

Conditions that guarantee  that the sum of two FSISs is closed are described in Section \ref{section6}.

In what follows we will consider, for each $\g,\ti \in \Gamma$, the subspaces,
\begin{equation}\label{defclau}
\overline{S}_{\g,\ti}:=\overline{S_{\g}+S_{\ti}}.
\end{equation}
The following proposition states that the closure of the sum of two SISs is a SIS generated by the union
of the generators of the two spaces. Its proof is straightforward.

\begin{proposition}\label{clau}
Let $\Phi$ and $\Phi'$ be sets in $L^2(\R^n)$,
then $$\overline{S(\Phi)+S(\Phi') }=S(\Phi \cup\Phi').$$
In particular, if $S$ and $S'$are FSISs, then $\overline{S+S'}$ is an FSIS and
$$\textnormal{len}(\overline{S+S'}) \leq \textnormal{len}(S)+\textnormal{len}(S').$$
\end{proposition}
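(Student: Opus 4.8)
The plan is to prove the identity $\overline{S(\Phi)+S(\Phi')}=S(\Phi\cup\Phi')$ by a direct double inclusion, using only the definition $S(\Phi)=\sn(E(\Phi))$ together with the elementary fact that $E(\Phi\cup\Phi')=E(\Phi)\cup E(\Phi')$, which follows immediately from the definition of $E$. Once the main identity is established, the consequences for FSISs will follow with essentially no extra work.

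First I would record that $E(\Phi)\subset E(\Phi\cup\Phi')$ and $E(\Phi')\subset E(\Phi\cup\Phi')$, so that both $S(\Phi)=\sn(E(\Phi))$ and $S(\Phi')=\sn(E(\Phi'))$ are contained in the closed space $S(\Phi\cup\Phi')$. Hence their algebraic sum $S(\Phi)+S(\Phi')$ lies in $S(\Phi\cup\Phi')$, and taking closures gives $\overline{S(\Phi)+S(\Phi')}\subset S(\Phi\cup\Phi')$ since the right-hand side is already closed. For the reverse inclusion, the key observation is that every element of $E(\Phi\cup\Phi')$ is either a translate $t_k\phi$ with $\phi\in\Phi$ or a translate $t_k\phi'$ with $\phi'\in\Phi'$; in the first case it lies in $S(\Phi)$ and in the second in $S(\Phi')$, so in either case it belongs to $S(\Phi)+S(\Phi')$. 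Therefore $E(\Phi\cup\Phi')\subset S(\Phi)+S(\Phi')\subset\overline{S(\Phi)+S(\Phi')}$. Since $\overline{S(\Phi)+S(\Phi')}$ is a closed subspace containing the generating set $E(\Phi\cup\Phi')$, it contains $\sn(E(\Phi\cup\Phi'))=S(\Phi\cup\Phi')$, which completes the double inclusion.

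For the particular case, suppose $S=S(\Phi)$ and $S'=S(\Phi')$ with $\Phi,\Phi'$ finite. Then $\Phi\cup\Phi'$ is finite and, by the identity just proved, $\overline{S+S'}=S(\Phi\cup\Phi')$ is generated by a finite set, hence an FSIS. Choosing $\Phi$ and $\Phi'$ to be smallest generating sets realizing the lengths, we get $\#(\Phi\cup\Phi')\LE \#\Phi+\#\Phi'=\textnormal{len}(S)+\textnormal{len}(S')$, and since the length of $\overline{S+S'}$ is the minimum over all generating sets it is bounded by the cardinality of this particular one, giving $\textnormal{len}(\overline{S+S'})\LE\textnormal{len}(S)+\textnormal{len}(S')$.

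I do not expect any genuine obstacle here, which is consistent with the paper calling the proof straightforward; the only point requiring a little care is not to confuse the algebraic sum with its closure, since $S(\Phi)+S(\Phi')$ need not itself be closed. The identity is an equality of \emph{closed} spaces precisely because one applies the closure on the left, and the argument never needs closedness of the raw sum. The mild subtlety worth flagging is that the result holds for arbitrary (even countable) generating sets, with finiteness invoked only in the FSIS corollary to guarantee that $\Phi\cup\Phi'$ remains finite and hence that $\overline{S+S'}$ is again finitely generated.
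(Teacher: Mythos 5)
Your double-inclusion argument is correct and is precisely the ``straightforward'' proof the paper has in mind (the paper omits it entirely): both directions reduce to the observation that $E(\Phi\cup\Phi')=E(\Phi)\cup E(\Phi')$ and that a closed subspace containing a set contains its closed span. The FSIS consequence and the length bound are handled exactly as one would expect, by taking minimal generating sets, so there is nothing to add.
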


Now, as a consequence of Proposition \ref{clau}, for each $\g,\ti\in\Gamma$, $\overline{S}_{\g,\ti}$ is an FSIS.
Then, by Theorem \ref{RS}, we can choose, for each $\g,\ti\in\Gamma$, a {\it finite} set
$$\Phi_{\g,\ti}=\{\phi_j^{\g,\ti}\}_{ j=1}^{d_{\g,\ti}} $$
of $ L^2(\R^n)$ functions such that,
$$\overline{S}_{\g,\ti}=S(\Phi_{\g,\ti}),$$ and
$E(\Phi_{\g,\ti})$ forms a Parseval frame for $\overline{S}_{\g,\ti}$.

\subsection{The one-to-one condition}

We now study the conditions that the sampling set must satisfy  in order  for the operator $A$ defined by (\ref{A}) to be one-to-one on $\chi$.

Given a shift-invariant space $S$, the orthogonal projection onto $S$, denoted by $P_S$, commutes
with integer translates. Then, part i) of  Proposition \ref{reiny} can be rewritten as,

\begin{proposition}\label{reb}
Given a shift-invariant space $S$, $\Psi=\{\psi_i\}_{i\in I}$  such that $E(\Psi)$ is a Bessel sequence in $L^2(\R^n)$ and $A$ the  sampling operator associated to $E(\Psi)$. Then the following are equivalent.
\begin{enumerate}
\item [i)] The sampling operator $A$  is one-to-one on  $S.$
\item [ii)] $E( P_S\Psi)=\{t_k P_S\psi_i\}_{i\in I,k\in\Z^n}$
is complete in $S$, that is $S=\overline{\textnormal{span}} \,E( P_S\Psi)$.
\end{enumerate}
\end{proposition}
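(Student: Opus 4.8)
The plan is to reduce Proposition~\ref{reb} directly to part~i) of Proposition~\ref{reiny}, which already establishes that for a closed subspace $S$, the sampling operator $A$ (the analysis operator for the sampling set) is one-to-one on $S$ if and only if the projected sampling family is complete in $S$. Since a shift-invariant space is by definition closed, Proposition~\ref{reiny} applies to $S$ with the sampling set taken to be $E(\Psi)=\{t_k\psi_i\}_{i\in I,k\in\Z^n}$. The only genuine content of Proposition~\ref{reb} beyond Proposition~\ref{reiny} is the identification of $P_S\big(E(\Psi)\big)$ with $E(P_S\Psi)$, that is, the assertion that projecting the full translation family and then taking its closed span is the same as taking the closed span of $E(P_S\Psi)=\{t_k P_S\psi_i\}$.

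First I would invoke Proposition~\ref{reiny}~i) with the Bessel sequence $E(\Psi)$ in place of $\{\psi_i\}$: $A$ is one-to-one on $S$ if and only if $\{P_S(t_k\psi_i)\}_{i\in I,\,k\in\Z^n}$ is complete in $S$, i.e. $S=\overline{\textnormal{span}}\{P_S(t_k\psi_i)\}$. The key step is then to use that $P_S$ commutes with integer translates, which holds precisely because $S$ is shift-invariant: for a SIS, $t_k S=S$ and $t_k S^\perp=S^\perp$ (the orthogonal complement of a SIS is again shift-invariant), so $t_k$ maps the orthogonal decomposition $\mathcal H=S\oplus S^\perp$ to itself componentwise, giving $P_S t_k = t_k P_S$. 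Hence $P_S(t_k\psi_i)=t_k(P_S\psi_i)$, so the projected family $\{P_S(t_k\psi_i)\}$ coincides exactly with $E(P_S\Psi)=\{t_k P_S\psi_i\}$. Substituting this equality into the completeness condition from Proposition~\ref{reiny}~i) yields the equivalence of i) and ii) as stated.

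The argument that $P_S$ commutes with integer translates is the one substantive point, but it is short. One way I would justify $t_k S^\perp=S^\perp$ is: if $g\in S^\perp$ and $f\in S$, then $\langle t_k g, f\rangle = \langle g, t_{-k} f\rangle = 0$ since $t_{-k}f\in S$ by shift-invariance; thus $t_k g\in S^\perp$, and the reverse inclusion follows by applying the same to $t_{-k}$. Alternatively, one can cite Proposition~\ref{proy}, which gives $\tau(P_S f)(\w)=P_{J_S(\w)}(\tau f(\w))$, together with the fact that $\tau(t_k f)(\w)=e^{-2\pi i k\w}\tau f(\w)$ is pointwise multiplication by a scalar on each fiber (and scalar multiplication commutes with the fiberwise projection $P_{J_S(\w)}$); this yields $\tau(P_S t_k f)=\tau(t_k P_S f)$ for a.e.\ $\w$, and the commutation follows from the injectivity of $\tau$.

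I expect no real obstacle here; the proposition is essentially a transcription of Proposition~\ref{reiny}~i) into the shift-invariant setting, and the authors' own remark that ``$P_S$ commutes with integer translates'' already flags the single nontrivial ingredient. The main thing to be careful about is to state the commutation cleanly and to note that it gives the \emph{set} equality $P_S(E(\Psi))=E(P_S\Psi)$, after which the closed spans, and hence the completeness conditions, are identical by construction.
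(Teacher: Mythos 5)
Your proposal is correct and follows exactly the route the paper takes: the paper presents Proposition~\ref{reb} as an immediate rewriting of Proposition~\ref{reiny}~i) using the fact that $P_S$ commutes with integer translates for a shift-invariant $S$, which is precisely your argument (you merely supply the short justification of the commutation that the paper leaves implicit). No gaps.
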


Since $E(\Psi)$  is a Bessel sequence in $L^2(\R^n)$, by Theorem \ref{grami5} we have that
$\{\tau\psi_i(\w)\}_{i\in I}$ is a Bessel sequence in $\ell^2(\Z^n)$ for a.e $\w\in\mathbb{T}^n$, so we can define (up to a set of measure zero), for $\omega \in \mathbb{T}^n$, the sampling operator related to the fibers:
$$\A(\w):\ell^2(\Z^n)\rightarrow \ell^2(I),$$ with
\begin{equation}\label{aw}
\A(\w)(c)=\{\langle c, \tau \psi_i(\w)\rangle\}_{i\in I}.
\end{equation}

That is, for a fixed $\omega \in \mathbb{T}^n$, we consider the problem of sampling from a union of subspaces in a different setting. The Hilbert space is $\ell^2(\Z^n)$,  the sequences of the sampling set are $\{\tau\psi_i(\w)\}_{i\in I}$, and the subspaces in the union are $J_{S_{\g}}(\omega),  \g \in \Gamma.$

Since the subspaces $\overline{S}_{\g,\ti}$ are FSISs, the fiber spaces $J_{\overline{S}_{\g\,\ti}}(\omega)$ are finite-dimensional.
So, the results of Section \ref{section4} can be applied, and conditions on the fibers can be obtained
in order for the operator $\A(\omega)$ to be one-to-one.

We are now going to show that given a finitely generated shift-invariant space $S$, the operator $A$ is one-to-one on $S$ if and
only if for almost every $\omega \in \mathbb{T}^n$,  the operator $\A(\w)$ is one-to-one on the corresponding fiber spaces $J_{S}(\omega)$ associated to $S$.
Once this is accomplished, we can apply the known conditions for the operator $\A(\omega).$

Given $\{t_k\phi_j\}_{j=1,k\in\Z^n}^{d}$ a Bessel sequence in $L^2(\R^n)$, we have the synthesis operator related to the fibers,
that is
\begin{equation}\label{b*w}
\B_{\Phi}^*(\w):\C^{d}\rightarrow
\ell^2(\Z^n),\quad
\B_{\Phi}^*(\w)(c_1,\ldots,c_d)=\sum_{j=1}^{d}c_j\tau\phi_j(\w).
\end{equation}
Note  that $\B_{\Phi}^*(\w)$ is the synthesis operator associated to the set $\tau\Phi(\w)$, that is $\B_{\Phi}^*(\w)=B_{\tau\Phi(\w)}^*$.

And we will have the cross-correlation operator associated to the fibers
$$\G_{\Phi,\Psi}(\w):\C^d\rightarrow \ell^2(I),\quad \G_{\Phi,\Psi}(\w):=\A(\w)\B_{\Phi}^*(\w),$$
\begin{equation}\label{gw}
(\G_{\Phi,\Psi}(\w))_{i,j}=\langle \tau\phi_j(\w),\tau\psi_i(\w)\rangle\quad\forall\,1\leq j\leq d,i\in I.
\end{equation}
Again we should remark  that $\G_{\Phi,\Psi}(\w)$ is the cross-correlation operator associated to $\tau\Phi(\w)$ and $\tau\Psi(\w)$, that is $\G_{\Phi,\Psi}(\w)=G_{\tau\Phi(\w),\tau\Psi(\w)}$.

\begin{theorem}\label{teinysis}
Let $\Psi=\{\psi_i\}_{i\in I}$ be such that $E(\Psi)$ is a Bessel sequence in $L^2(\R^n)$, $S$ an FSIS generated by a finite set $\Phi$, and $A$ the sampling operator associated to $E(\Psi)$, then
the following are equivalent:
\begin{enumerate}
\item [i)] $\Psi$ provides a one-to-one sampling operator for $S.$
\item [ii)] $\ker(\G_{\Phi,\Psi}(\w))=\ker(\B_{\Phi}^*(\w))$ for a.e. $\w \in \mathbb{T}^n.$
\item [iii)] $\dim(\textnormal{range}(\G_{\Phi,\Psi}(\w)))=\dim_{S}(\w)$ for a.e. $\w \in \mathbb{T}^n.$
\end{enumerate}
\end{theorem}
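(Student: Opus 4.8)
The plan is to show that all three conditions are equivalent to the single fiberwise statement: for a.e.\ $\w \in \mathbb{T}^n$, the fiber sampling operator $\A(\w)$ is one-to-one on $J_S(\w)$. The equivalence of ii) and iii) with this statement is a direct transcription of the finite-dimensional Theorem \ref{teinys}, while the genuine content lies in linking i) (global injectivity on $S$) to fiber injectivity.

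For the fiber-level equivalences, I would fix $\w$ outside the relevant null sets. By Theorem \ref{grami5}, $\{\tau\psi_i(\w)\}_{i\in I}$ is a Bessel sequence in $\ell^2(\Z^n)$ for a.e.\ $\w$, and since $\Phi$ is finite we have $J_S(\w)=\overline{\textnormal{span}}\{\tau\phi(\w):\phi\in\Phi\}=\textnormal{span}\{\tau\phi(\w):\phi\in\Phi\}$ by Proposition \ref{fun}, so $\tau\Phi(\w)$ is a (finite, spanning) frame for the finite-dimensional space $J_S(\w)$. Applying Theorem \ref{teinys} in the Hilbert space $\ell^2(\Z^n)$ to $J_S(\w)$ with frame $\tau\Phi(\w)$, Bessel sequence $\tau\Psi(\w)$, and sampling operator $\A(\w)$—and using $\G_{\Phi,\Psi}(\w)=G_{\tau\Phi(\w),\tau\Psi(\w)}$, $\B_\Phi^*(\w)=B^*_{\tau\Phi(\w)}$, and $\dim(J_S(\w))=\dim_S(\w)$—shows that, for a.e.\ fixed $\w$, each of conditions ii) and iii) holds exactly when $\A(\w)$ is one-to-one on $J_S(\w)$. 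Taking the almost-everywhere quantifier across gives ii) $\Leftrightarrow$ iii) $\Leftrightarrow$ (fiber injectivity a.e.).

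For i) $\Leftrightarrow$ fiber injectivity, I would use Proposition \ref{reb}: $A$ is one-to-one on $S$ iff $E(P_S\Psi)$ is complete in $S$, i.e.\ $S(P_S\Psi)=S$. Since each $P_S\psi_i\in S$ and $S$ is a closed shift-invariant space, $S(P_S\Psi)\subseteq S$; hence by the one-to-one correspondence between SISs and measurable range functions (Proposition \ref{fun}) we have $S(P_S\Psi)=S$ iff $J_{S(P_S\Psi)}(\w)=J_S(\w)$ for a.e.\ $\w$. Now $P_S\Psi$ is a countable generating set, so Proposition \ref{fun} together with the intertwining identity $\tau(P_S\psi_i)(\w)=P_{J_S(\w)}(\tau\psi_i(\w))$ from Proposition \ref{proy} yields
\[
J_{S(P_S\Psi)}(\w)=\overline{\textnormal{span}}\{P_{J_S(\w)}\tau\psi_i(\w): i\in I\}\quad\text{for a.e. }\w.
\]
Thus $J_{S(P_S\Psi)}(\w)=J_S(\w)$ a.e.\ is precisely the statement that $\{P_{J_S(\w)}\tau\psi_i(\w)\}_{i\in I}$ is complete in $J_S(\w)$ for a.e.\ $\w$. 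Finally, part i) of Proposition \ref{reiny}, applied inside $\ell^2(\Z^n)$ to the closed finite-dimensional subspace $J_S(\w)$ and the Bessel sequence $\{\tau\psi_i(\w)\}$, identifies this completeness with $\A(\w)$ being one-to-one on $J_S(\w)$. Chaining the equivalences completes the argument.

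The main obstacle I expect is the bidirectional passage of the ``a.e.\ $\w$'' quantifier: one must confirm that $\w\mapsto J_{S(P_S\Psi)}(\w)$ is a measurable range function and that equality of two SISs is equivalent to almost-everywhere equality of their range functions in \emph{both} directions, which is exactly where the one-to-one correspondence of Proposition \ref{fun} and the measurability framework of Definition \ref{def-range} carry the weight. Once the fiber space of $S(P_S\Psi)$ is correctly identified via Propositions \ref{fun} and \ref{proy}, everything else reduces to finite-dimensional linear algebra applied fiber by fiber.
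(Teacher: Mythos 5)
Your proposal is correct and follows essentially the same route as the paper: the paper isolates your step ``i) $\Leftrightarrow$ a.e.\ fiber injectivity'' as Lemma \ref{leinysis} (proved exactly via Propositions \ref{reb}, \ref{fun}, \ref{proy} and \ref{reiny}), and then applies Theorem \ref{teinys} fiberwise to $\A(\w)$, $J_S(\w)$ and $\tau\Phi(\w)$ just as you do. The measurability/a.e.-quantifier concern you flag is handled in the paper the same way, by the one-to-one correspondence between shift-invariant spaces and measurable range functions in Proposition \ref{fun}.
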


For the proof of Theorem \ref{teinysis} we need the following.
\begin{lemma}\label{leinysis}
Let $S$ be an FSIS$, \Psi=\{\psi_i\}_{i\in I}$  such that $E(\Psi)$ is a Bessel sequence in $L^2(\R^n)$,  and $A$ the sampling operator associated to $E(\Psi)$.
Then $A$ is one-to-one on $S$ if and only if  $\A(\w)$ is one-to-one on $J_S(\w)$ for a.e. $\w\in\mathbb{T}^n.$
\end{lemma}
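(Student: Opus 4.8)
The plan is to transfer the one-to-one property of $A$ on $S$ to the fibers by combining Proposition~\ref{reb} with the bijective correspondence between shift-invariant spaces and measurable range functions (Proposition~\ref{fun}), and then to apply the finite-dimensional criterion of Proposition~\ref{reiny}~i) at almost every $\w$. Concretely, I would establish the chain of equivalences
$$A \text{ one-to-one on } S \iff S = S(P_S\Psi) \iff \A(\w) \text{ one-to-one on } J_S(\w) \text{ for a.e. } \w,$$
where $S(P_S\Psi)$ denotes the SIS generated by $\{P_S\psi_i\}_{i\in I}$; each link is a genuine equivalence, so a single argument settles both implications.

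First I would use Proposition~\ref{reb}: $A$ is one-to-one on $S$ if and only if $E(P_S\Psi)$ is complete in $S$, that is $S = \sn\, E(P_S\Psi) = S(P_S\Psi)$. Note that $S(P_S\Psi)\subset S$ always holds, since $P_S\psi_i\in S$ and $S$ is shift-invariant. Next I would pass to fibers. Since $S(P_S\Psi)$ and $S$ are both SIS, Proposition~\ref{fun} shows that $S = S(P_S\Psi)$ if and only if their range functions agree a.e., and that, using the countable generating set $\{P_S\psi_i\}$ together with Proposition~\ref{proy} (which gives $\tau(P_S\psi_i)(\w) = P_{J_S(\w)}(\tau\psi_i(\w))$),
$$J_{S(P_S\Psi)}(\w) = \sn\{P_{J_S(\w)}(\tau\psi_i(\w)) : i\in I\} \qquad \text{for a.e. } \w.$$
Thus $S = S(P_S\Psi)$ is equivalent to $J_S(\w) = \sn\{P_{J_S(\w)}(\tau\psi_i(\w))\}_{i\in I}$ for a.e. $\w$.

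Finally, for a.e. $\w$ the subspace $J_S(\w)\subset\ell^2(\Z^n)$ is closed and finite-dimensional (as $S$ is an FSIS), and $\{\tau\psi_i(\w)\}_{i\in I}$ is a Bessel sequence there by Theorem~\ref{grami5}, with $\A(\w)$ its analysis operator. Proposition~\ref{reiny}~i), applied in $\ell^2(\Z^n)$ to the closed subspace $J_S(\w)$, then states that $\A(\w)$ is one-to-one on $J_S(\w)$ if and only if $\{P_{J_S(\w)}(\tau\psi_i(\w))\}_{i\in I}$ is complete in $J_S(\w)$, which is exactly the fiber condition just obtained. Chaining these equivalences yields the lemma.

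The step I expect to need the most care is the measure-theoretic coordination of the various ``for a.e. $\w$'' clauses. The Bessel property of $\{\tau\psi_i(\w)\}$, the identity $\tau(P_S\psi_i)(\w)=P_{J_S(\w)}(\tau\psi_i(\w))$ (valid a priori only off an $i$-dependent null set), and the finiteness of $\dim_S(\w)$ must all be gathered onto a single co-null set, which is legitimate because $I$ is countable. One must also be careful that the implication ``$J_S = J_{S(P_S\Psi)}$ a.e. $\Rightarrow S = S(P_S\Psi)$'' uses the injectivity of the SIS/range-function correspondence, not merely an inclusion of fibers.
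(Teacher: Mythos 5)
Your proof is correct and follows essentially the same route as the paper's: Proposition~\ref{reb} reduces injectivity of $A$ on $S$ to completeness of $E(P_S\Psi)$ in $S$, Proposition~\ref{fun} translates this to the fiber condition $J_S(\w)=\sn\{\tau(P_S\psi_i)(\w)\}_{i\in I}$ a.e., and Propositions~\ref{proy} and~\ref{reiny} identify that condition with injectivity of $\A(\w)$ on $J_S(\w)$. Your extra remarks on collecting the countably many null sets and on using the injectivity of the SIS/range-function correspondence are sound refinements the paper leaves implicit (note only that Proposition~\ref{reiny}~i) needs $J_S(\w)$ closed, not finite-dimensional, so calling it a ``finite-dimensional criterion'' is a slight misnomer).
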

\begin{proof}
Since S is a SIS, by Proposition \ref{reb}, $A$ is one-to-one on $S$ if and only if
\begin{equation}\label{aa}
S=\sn\,E(P_S\Psi).
\end{equation}

By Proposition \ref{fun}, equation (\ref{aa}) is equivalent to
\begin{equation}\label{L2}
J_S(\w)=\sn\{\tau (P_S\psi_i)(\w): i\in I\}\quad\mbox{ for a.e. }\w\in\mathbb{T}^n.
\end{equation}
So, we have proved that  $A$ is one-to-one on $S$ if and only if
(\ref{L2}) holds.

On the other side, given $\w\in\mathbb{T}^n$, and using Proposition \ref{reiny} for the sampling operator $\A(\w)$
 and the space $\mathcal{H}=\ell^2(\Z^n)$, we have that $\A(\w)$ is one-to-one on
$J_S(\w)$ if and only if
$$J_S(\w)=\sn\{ P_{J_S(\w)}(\tau\psi_i(\w)): i\in I\}.$$
Then, using Proposition \ref{proy}, we conclude that (\ref{L2}) holds if and only if $\A(\w)$ is one-to-one on
$J_S(\w)$, for a.e. $\w\in\mathbb{T}^n$, which completes the proof of the lemma.

\end{proof}

\begin{proof}[{\it Proof of Theorem \ref{teinysis}}]
Since $\Phi$ is a set of generators for  $S$,  we have that
for a.e. $\w\in \mathbb{T}^n$, $\tau\Phi(\w)$ is a set of generators for  $J_S(\w)$.

Now, for a.e. $\w\in \mathbb{T}^n$ we can apply Theorem \ref{teinys} for the sampling operator $\A(\omega)$
and the finite-dimentional subspace $J_S(\w)$ to obtain the equivalence of the following propositions:

\begin{enumerate}
\item [a)] $\A(\w)$ is one-to-one on
$J_S(\w).$
\item [b)] $\ker(\G_{\Phi,\Psi}(\w))=\ker(\B_{\Phi}^*(\w)).$
\item [c)] $\dim(\text{range}(\G_{\Phi,\Psi}(\w)))=\dim(J_{S}(\w))=\dim_{S}(\w).$
\end{enumerate}
From here the proof follows using Lemma \ref{leinysis}.

\end{proof}

Note that with the previous theorem we have conditions for $A$ to be one-to-one on $\overline{S}_{\g,\ti}$,
and since
$$S_{\g,\ti}=S_{\g}+S_{\ti}\subset \overline{S}_{\g,\ti},$$
we obtain the following corollary.

\begin{corollary}\label{cor-iny-sis}
Let $E(\Psi)$ be a Bessel sequence in $L^2(\R^n)$ for some set of functions $\Psi$. For every $\g,\ti\in\Gamma$, let $\Phi_{\g,\ti}$ be a finite set of generators for $\overline{S}_{\g,\ti}$.
If for each $\g,\ti\in\Gamma,$
$$
\dim(\textnormal{range}(\G_{\Phi_{\g,\ti},\Psi}(\w)))=\dim_{\overline{S}_{\g,\ti}}(\w)\quad\text{for a.e. }\w \in \mathbb{T}^n,
$$ then $A$ is one-to-one on $\chi$.
\end{corollary}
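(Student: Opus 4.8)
The plan is to combine Theorem \ref{teinysis} with Proposition \ref{complete} from Section \ref{section3}, reducing the injectivity of $A$ on the whole union $\chi$ to the injectivity of $A$ on each closed sum $\overline{S}_{\g,\ti}$, and then to verify the latter using the fiberwise rank condition. First I would observe that, by Proposition \ref{complete}, it suffices to show that $\{P_{\overline{S}_{\g,\ti}}\psi_i\}_{i\in I}$ (or rather its full translation family $E(P_{\overline{S}_{\g,\ti}}\Psi)$) is complete in $\overline{S}_{\g,\ti}$ for every $\g,\ti\in\Gamma$; equivalently, that $A$ is one-to-one on each $\overline{S}_{\g,\ti}$.

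The key step is to apply Theorem \ref{teinysis} to each space $\overline{S}_{\g,\ti}$ in place of $S$. By Proposition \ref{clau}, each $\overline{S}_{\g,\ti}=\overline{S_\g+S_\ti}$ is itself an FSIS, so it is a legitimate input to Theorem \ref{teinysis}. Taking $\Phi_{\g,\ti}$ as the given finite generating set for $\overline{S}_{\g,\ti}$, the hypothesis
$$\dim(\textnormal{range}(\G_{\Phi_{\g,\ti},\Psi}(\w)))=\dim_{\overline{S}_{\g,\ti}}(\w)\quad\text{for a.e. }\w\in\mathbb{T}^n$$
is exactly condition iii) of Theorem \ref{teinysis} for the space $\overline{S}_{\g,\ti}$. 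That theorem then yields condition i), namely that $A$ is one-to-one on $\overline{S}_{\g,\ti}$, for every $\g,\ti\in\Gamma$.

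The final step is the elementary containment $S_{\g,\ti}=S_\g+S_\ti\subset\overline{S}_{\g,\ti}$: since $A$ is injective on the larger closed space $\overline{S}_{\g,\ti}$, it is a fortiori injective on the subset $S_{\g,\ti}$. Applying part i) of Proposition \ref{proposition-dl}, injectivity of $A$ on every $S_{\g,\ti}$ is equivalent to injectivity of $A$ on $\chi$, which is the desired conclusion. I do not anticipate a serious obstacle here: the result is essentially a packaging of Theorem \ref{teinysis} (whose own proof, via Lemma \ref{leinysis} and the fiberization of Section \ref{section4}, carries the real content) together with the sum/closure reductions already established. The only point deserving care is that the hypothesis is only a \emph{sufficient} condition—because injectivity on $\chi$ requires injectivity on the sums $S_{\g,\ti}$ and not on their closures, passing to $\overline{S}_{\g,\ti}$ can strictly strengthen the requirement; this is precisely why the statement asserts an implication rather than an equivalence, and why equivalence is recovered only when the sums $S_{\g,\ti}$ are already closed.
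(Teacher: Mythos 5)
Your proposal is correct and follows essentially the same route as the paper: apply Theorem \ref{teinysis} to each FSIS $\overline{S}_{\g,\ti}$ to get injectivity there, pass to the subset $S_{\g,\ti}\subset\overline{S}_{\g,\ti}$, and conclude via the union-of-subspaces reduction of Proposition \ref{proposition-dl}. Your closing observation about why only sufficiency is obtained matches the paper's own remark following the corollary.
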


\begin{remark}
It is important to note that the injectivity of $A$ on
$S_{\g,\ti}$ does not imply the injectivity on $\overline{S}_{\g,\ti}$, thus, we have only obtained sufficient
conditions for $A$ to be one-to-one.
This is not a problem in general, because  as we will see in the next section,
stability implies injectivity  in the case of  the sampling operator and stability
is a common and needed assumption in most  sampling applications.
\end{remark}

\subsection{The stability condition}

As a consequence of Proposition \ref{lemstab}, we will obtain
necessary and sufficient conditions for the stability of $A$.

As in the previous subsection, using that the orthogonal projection onto a SIS commutes with integer translates, we have the following
version of Proposition \ref{reiny}.

\begin{proposition}\label{2}
Given S a SIS of $L^2(\R^n)$, $\Psi=\{\psi_i\}_{i\in I}$ such that $E(\Psi)$ is a Bessel sequence in $L^2(\R^n)$ and $A$ the  sampling operator associated to $E(\Psi)$. Then the following are equivalent:
\begin{enumerate}
\item [i)] The sampling operator $A$ is stable for  $S$ with constants $\alpha$ and $\beta$.
\item [ii)] $E(P_S\Psi)$ is a frame for $S$ with constants $\alpha$ and $\beta$.
\end{enumerate}
\end{proposition}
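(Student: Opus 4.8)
The plan is to reduce everything to the already-established Hilbert-space criterion, Proposition~\ref{reiny}, by treating $E(\Psi)=\{t_k\psi_i\}_{i\in I,\,k\in\Z^n}$ as a single Bessel sequence indexed by $I\times\Z^n$. Since $S$ is a SIS it is a closed subspace of $L^2(\R^n)$, and $A$ is by definition the analysis operator of $E(\Psi)$. Hence part ii) of Proposition~\ref{reiny}, applied verbatim to this Bessel sequence and the closed subspace $S$, tells us that $A$ is stable on $S$ with constants $\alpha$ and $\beta$ if and only if the projected family $\{P_S(t_k\psi_i)\}_{i\in I,\,k\in\Z^n}$ is a frame for $S$ with the same constants $\alpha$ and $\beta$.

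The only remaining point is to identify this projected family with $E(P_S\Psi)$. Here I would invoke the fact, recalled just before Proposition~\ref{reb}, that the orthogonal projection onto a shift-invariant space commutes with integer translates: since $t_k$ is unitary and carries both $S$ and $S^{\perp}$ onto themselves, one has $P_S t_k = t_k P_S$ for every $k\in\Z^n$. Consequently
\[
P_S(t_k\psi_i)=t_k(P_S\psi_i)\qquad\forall\,i\in I,\ k\in\Z^n,
\]
so that $\{P_S(t_k\psi_i)\}_{i,k}$ is exactly the set $E(P_S\Psi)=\{t_k P_S\psi_i\}_{i\in I,\,k\in\Z^n}$. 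Substituting this identification into the statement obtained from Proposition~\ref{reiny} yields the equivalence of i) and ii).

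Alternatively, one can argue directly and self-containedly: by linearity of $A$ and since $S$ is a subspace, the two-point stability inequality for $x_1,x_2\in S$ is equivalent to the one-point inequality $\alpha\|f\|^2\le\|Af\|^2\le\beta\|f\|^2$ for all $f\in S$. For $f\in S$ the commutation relation gives $\langle f,t_k\psi_i\rangle=\langle P_S f,t_k\psi_i\rangle=\langle f,t_k P_S\psi_i\rangle$, whence
\[
\|Af\|^2=\sum_{i\in I}\sum_{k\in\Z^n}|\langle f,t_k\psi_i\rangle|^2=\sum_{i\in I}\sum_{k\in\Z^n}|\langle f,t_k P_S\psi_i\rangle|^2.
\]
Comparing the right-hand side with the defining double inequality of a frame for $S$ with bounds $\alpha,\beta$ produces the claimed equivalence.

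I do not expect any genuine obstacle: the argument is the shift-invariant transcription of Proposition~\ref{reiny}, and the only ingredient beyond that proposition is the commutation of $P_S$ with translations, which is standard for shift-invariant spaces. Care should be taken only to note that each $t_k P_S\psi_i$ indeed lies in $S$ (because $P_S\psi_i\in S$ and $S$ is shift-invariant), so that the phrase \emph{frame for $S$} in condition ii) is meaningful.
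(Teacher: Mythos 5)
Your proposal is correct and matches the paper's own (implicit) argument: the paper derives Proposition~\ref{2} exactly by applying part ii) of Proposition~\ref{reiny} to the Bessel sequence $E(\Psi)$ and using that $P_S$ commutes with integer translates, so that $\{P_S(t_k\psi_i)\}_{i,k}=E(P_S\Psi)$. Your self-contained alternative is a harmless restatement of the same computation.
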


Now we are able to state the stability theorem. We will use the operator related to the fibers, defined by
(\ref{aw}), (\ref{b*w}) and (\ref{gw}).

\begin{theorem}\label{stasub}
Let $\Psi=\{\psi_i\}_{i\in I}$ be such that $E(\Psi)$ is a Bessel sequence for $L^2(\R^n)$ and $A$ the  sampling operator associated to $E(\Psi)$.
Let $S$ be an FSIS, and  $\Phi$ a finite set of functions such that $E(\Phi)$ forms a Parseval frame for $S.$

Then
$E(\Psi)$ provides a stable sampling operator for $S$
if and only if
\begin{enumerate}
\item [i)] $\dim(\textnormal{range}(\G_{\Phi,\Psi}(\w)))=\dim_{S}(\w)$ for a.e. $\w\in \mathbb{T}^n$ and
\item [ii)] There exist constants $0<\alpha\leq\beta<\infty$ such that $$\sigma^2(
\G_{\Phi,\Psi}(\w))\subseteq\{0\}\cup[\alpha,\beta]\quad\mbox{for a.e. }
\w \in \mathbb{T}^n.$$
\end{enumerate}
\end{theorem}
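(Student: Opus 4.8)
The plan is to reduce the statement to a fiber-by-fiber application of the finite-dimensional criterion Theorem \ref{teosta}, in the same spirit in which Lemma \ref{leinysis} was used to prove Theorem \ref{teinysis}. The backbone will be a stability analogue of Lemma \ref{leinysis}, namely the claim that $A$ is stable for $S$ with constants $\alpha$ and $\beta$ if and only if the fiber operator $\A(\w)$ is stable on $J_S(\w)$ with the same constants for a.e.\ $\w\in\mathbb{T}^n$. Granting this, the theorem follows quickly: since $E(\Phi)$ is a Parseval frame for $S$, Theorem \ref{grami3}(i) shows that $\tau\Phi(\w)$ is a Parseval frame for the finite-dimensional space $J_S(\w)$ for a.e.\ $\w$, so Theorem \ref{teosta} applies at each such $\w$ to the operator $\A(\w)$, the space $J_S(\w)$, and the Parseval frame $\tau\Phi(\w)$. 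Recalling that $\G_{\Phi,\Psi}(\w)=G_{\tau\Phi(\w),\tau\Psi(\w)}$ and $\dim J_S(\w)=\dim_S(\w)$, Theorem \ref{teosta} says exactly that $\A(\w)$ is stable on $J_S(\w)$ with constants $\alpha,\beta$ precisely when (i) and (ii) hold at $\w$ with those constants; feeding this into the intermediate equivalence yields the theorem.

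To establish the intermediate equivalence I would argue through the projected system, exactly as in Lemma \ref{leinysis} but carrying the frame bounds along. By Proposition \ref{2}, $A$ is stable for $S$ with constants $\alpha,\beta$ if and only if $E(P_S\Psi)$ is a frame for $S$ with those constants. Because $P_S$ commutes with integer translates, $E(P_S\Psi)$ is again a system of translates, and Proposition \ref{proy} identifies its fibers as $\tau(P_S\psi_i)(\w)=P_{J_S(\w)}(\tau\psi_i(\w))$. Hence, by Theorem \ref{grami3}(i), $E(P_S\Psi)$ is a frame for $S$ with constants $\alpha,\beta$ if and only if $\{P_{J_S(\w)}(\tau\psi_i(\w))\}_{i\in I}$ is a frame for $J_S(\w)$ with constants $\alpha,\beta$ for a.e.\ $\w$. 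Finally, working in $\ell^2(\Z^n)$ with the Bessel sampling set $\{\tau\psi_i(\w)\}_{i\in I}$ (Bessel for a.e.\ $\w$ by Theorem \ref{grami5}) and applying Proposition \ref{reiny}(ii) to the closed subspace $J_S(\w)$, this last frame condition is equivalent to the stability of $\A(\w)$ on $J_S(\w)$ with constants $\alpha,\beta$, which is what the intermediate equivalence asserts.

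I do not expect a deep obstacle: the argument is a chain of equivalences assembled from results already in hand. The two points that need care, both routine here, are the uniformity of the frame bounds across fibers and the spanning of the fiber spaces by the projected vectors. Uniformity is precisely what Theorem \ref{grami3}(i) provides, a single pair $(\alpha,\beta)$ serving for a.e.\ $\w$, and it is what permits the existential quantifier in condition (ii) to be taken outside the almost-everywhere statement. The spanning requirement, needed so that Theorem \ref{grami3} can be applied to the generating set $P_S\Psi$, is supplied automatically by the frame hypothesis (a frame for $J_S(\w)$ has closed span equal to $J_S(\w)$) and, on the side of hypotheses (i)--(ii), is encoded in condition (i) through Theorem \ref{teinys}. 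Since $S$ is an FSIS, each $J_S(\w)$ is finite-dimensional and hence closed, so both Proposition \ref{reiny} and Theorem \ref{teosta} are legitimately applicable fiber-wise.
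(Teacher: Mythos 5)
Your proposal is correct and follows essentially the same route as the paper: both reduce the theorem to the fiberwise equivalence ``$A$ stable on $S$ with constants $\alpha,\beta$ iff $\A(\w)$ stable on $J_S(\w)$ with the same constants a.e.'', established via Proposition \ref{2}, Theorem \ref{grami3}, Proposition \ref{proy} and Proposition \ref{reiny}, and then invoke Theorem \ref{teosta} on each finite-dimensional fiber space. Your added remarks on the uniformity of the bounds across fibers and on the completeness of the projected system only make explicit points the paper leaves implicit.
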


\begin{proof}
$\Phi$ is a Parseval frame for $S$, so, by Theorem \ref{grami3}, we have that
 for a.e. $\w\in \mathbb{T}^n$, $\tau\Phi(\w)$ is a Parseval frame for $J_S(\w)$. Since
 $J_S(\w)$ is a finite-dimensional space of $\ell^2(\Z^n)$, Theorem
\ref{teosta} holds for $\A(\w)$.

So, we only have to prove that A is stable for $S$  with constants $\alpha$ and $\beta$ if and only if $\A(\w)$ is stable for
$J_{S}(\w)$
with constants $\alpha$ and $\beta$.

By Proposition \ref{2}, the stability of $A$ in $S$ is equivalent to
$E(P_{S}\Psi)$ being a frame for $S$ with constants $\alpha$ and $\beta$.
By Theorem \ref{grami3}, this is equivalent to $$\{\tau(P_{S} \psi _i)(\w)\}_{i\in I}$$ being a frame for $J_{S}(\w)$ with constants $\alpha$ and $\beta$ for a.e. $\w\in \mathbb{T}^n.$

On the other hand, given $\w\in \mathbb{T}^n$, the operator $\A(\w)$ is stable for
$J_{S}(\w)$, if and only if
$$\{P_{J_{S}(\w)}(\tau\psi_i(\w))\}_{i\in I}$$ is a frame for $J_{S}(\w)$ with constants $\alpha$ and $\beta$.

The proof can be finished now using  first Proposition \ref{proy}, i.e.
$$\tau (P_{S}\psi_i)(\w)=P_{J_{S}(\w)}(\tau\psi_i(\w))\quad\mbox{ for a.e. } \w\in\mathbb{T}^n,$$
and then Theorem \ref{teosta}.
\end{proof}

Now we apply  Theorem \ref{stasub} and Proposition \ref{lemstab} to obtain the following.

\begin{theorem}\label{stasis}
Let $\Psi=\{\psi_i\}_{i\in I}$ such that $E(\Psi)$ is a Bessel sequence for $L^2(\R^n)$, and for every $\g,\ti\in\Gamma$ let $\Phi_{\g,\ti}$ be a Parseval
frame for $\overline{S}_{\g,\ti}$. Then
$E(\Psi)$ provides a stable sampling operator for $\chi$
if and only if
\begin{enumerate}
\item [i)] $\dim(\textnormal{range}(\G_{\Phi_{\g,\ti},\Psi}(\w)))=\dim_{\overline{S}_{\g,\ti}}(\w)$ for a.e. $\w\in \mathbb{T}^n,\,\forall\,
\g,\ti\in\Gamma$ and
 \item[ii)]There exist constants $0<\alpha\leq\beta<\infty$ such that $$\sigma^2(
\G_{\Phi_{\g,\ti},\Psi}(\w))\subseteq\{0\}\cup[\alpha,\beta]\quad\mbox{for a.e. }
\w \in \mathbb{T}^n,\,\forall\,
\g,\ti\in\Gamma.$$
\end{enumerate}
\end{theorem}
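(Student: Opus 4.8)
The plan is to assemble the statement from the three reduction results already proved, taking care to track the stability constants at every stage: the definition of stability on $\chi$ requires a \emph{single} pair $\alpha,\beta$ valid on the whole union, so every equivalence used must be bound-preserving and uniform over $\g,\ti$.

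First I would apply part ii) of Proposition \ref{proposition-dl}: $E(\Psi)$ provides a stable sampling operator for $\chi$ with constants $\alpha$ and $\beta$ if and only if $A$ is stable for every sum $S_{\g,\ti}=S_\g+S_\ti$ with the same constants $\alpha$ and $\beta$. Next, since $A$ is bounded, Proposition \ref{lemstab} shows that $A$ is stable for $S_{\g,\ti}$ with constants $\alpha,\beta$ if and only if it is stable for the closure $\overline{S}_{\g,\ti}$ with the same constants. This is the step that lets us replace the possibly non-closed sum $S_{\g,\ti}$ by a genuine FSIS, to which the fiber machinery applies.

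The core of the argument is then to apply Theorem \ref{stasub} to each $\overline{S}_{\g,\ti}$. By Proposition \ref{clau}, every $\overline{S}_{\g,\ti}$ is itself a finitely generated shift-invariant space, and by hypothesis $E(\Phi_{\g,\ti})$ is a Parseval frame for it, so Theorem \ref{stasub} applies verbatim and gives: $A$ is stable for $\overline{S}_{\g,\ti}$ with constants $\alpha,\beta$ if and only if $\dim(\textnormal{range}(\G_{\Phi_{\g,\ti},\Psi}(\w)))=\dim_{\overline{S}_{\g,\ti}}(\w)$ for a.e.\ $\w$ and $\sigma^2(\G_{\Phi_{\g,\ti},\Psi}(\w))\subseteq\{0\}\cup[\alpha,\beta]$ for a.e.\ $\w$. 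Combining this with the two reductions above yields the stated equivalence.

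The only delicate point, and the one I would treat most carefully, is the quantifier on the constants. In the forward direction, stability of $A$ on $\chi$ furnishes one pair $\alpha,\beta$, and the reductions produce conditions i) and ii) with exactly these constants, uniformly over all $\g,\ti$, which is precisely what ii) asserts. In the backward direction, condition ii) supplies a single pair $\alpha,\beta$ valid for every $\g,\ti$ and a.e.\ $\w$; feeding this uniform pair back through Theorem \ref{stasub}, Proposition \ref{lemstab}, and part ii) of Proposition \ref{proposition-dl} recovers stability of $A$ on all of $\chi$ with these same bounds. No new estimate is needed beyond checking that each equivalence preserves the constants, so the proof is essentially a bookkeeping assembly of the earlier results.
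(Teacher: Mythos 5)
Your proposal is correct and matches the paper's route exactly: the paper proves this theorem simply by invoking Theorem \ref{stasub} together with Proposition \ref{lemstab} (and, implicitly, part ii) of Proposition \ref{proposition-dl}), which is precisely your assembly. Your extra care about the uniformity of the constants over $\g,\ti$ is a worthwhile observation, since the statement of Theorem \ref{stasub} only quantifies the constants existentially while its proof is bound-preserving, and that bound-preserving version is what the argument needs.
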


Finally, as in \cite{DL08}, we  obtain a lower bound for the amount of samples. In contrast to the previous section, we only find bounds for stable
operators. We can not say anything about one-to-one operators since we only obtained sufficient conditions for the injectivity.

\begin{proposition}
If the operator A is stable for $\chi$ and $I$ is finite, then
$$\# I\geq \sup_{\g,\ti\in \Gamma}(\textnormal{len}(\overline{S}_{\g,\ti})).$$
\end{proposition}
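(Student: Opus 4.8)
The plan is to mirror the argument of the corollary in Section \ref{section4}, replacing the single finite-dimensional cross-correlation operator by its fiberwise counterpart $\G_{\Phi_{\g,\ti},\Psi}(\w)$ and then passing from a pointwise bound on the fiber dimensions to a bound on the length of $\overline{S}_{\g,\ti}$.

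First I would invoke Theorem \ref{stasis}. Since $A$ is stable for $\chi$, part i) of that theorem guarantees that for every $\g,\ti\in\Gamma$,
$$\dim(\textnormal{range}(\G_{\Phi_{\g,\ti},\Psi}(\w)))=\dim_{\overline{S}_{\g,\ti}}(\w)\quad\text{for a.e. }\w\in\mathbb{T}^n.$$
Next I would use the hypothesis that $I$ is finite. For each fixed $\w$ the operator $\G_{\Phi_{\g,\ti},\Psi}(\w)$ maps $\C^{d_{\g,\ti}}$ into $\ell^2(I)$; as $I$ is finite, $\ell^2(I)$ is just $\C^{\#I}$, so $\textnormal{range}(\G_{\Phi_{\g,\ti},\Psi}(\w))\subseteq\C^{\#I}$ and therefore
$$\dim(\textnormal{range}(\G_{\Phi_{\g,\ti},\Psi}(\w)))\leq \#I.$$
Combining with the displayed identity yields the pointwise estimate $\dim_{\overline{S}_{\g,\ti}}(\w)\leq \#I$ for a.e. $\w\in\mathbb{T}^n$, valid for every pair $\g,\ti\in\Gamma$.

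Finally I would upgrade this pointwise bound on the dimension function to a bound on the length. The key ingredient is the known fact from the theory of shift-invariant spaces that for a finitely generated space $S$ one has $\textnormal{len}(S)=\textnormal{ess\,sup}_{\w\in\mathbb{T}^n}\dim_S(\w)$ (see \cite{Bow00}). Applying this to the FSIS $\overline{S}_{\g,\ti}$ gives
$$\textnormal{len}(\overline{S}_{\g,\ti})=\textnormal{ess\,sup}_{\w}\dim_{\overline{S}_{\g,\ti}}(\w)\leq \#I,$$
and taking the supremum over all $\g,\ti\in\Gamma$ produces the claim. The only step that goes beyond the machinery already assembled in the preceding subsections is the identity relating the length to the essential supremum of the dimension function; this is the substantive point, while the reduction to a fiberwise dimension count is the verbatim shift-invariant translation of the finite-dimensional corollary. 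I would expect no genuine obstacle here, only the need to cite the length–dimension identity correctly, since the inequality actually used is the elementary direction that an FSIS with fibers of dimension at most $\#I$ almost everywhere admits a generating set of cardinality at most $\#I$.
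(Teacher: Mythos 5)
Your argument is correct and is essentially identical to the paper's proof: both deduce $\dim_{\overline{S}_{\g,\ti}}(\w)\leq\#I$ a.e.\ from Theorem \ref{stasis} and the inclusion of the range of the fiber cross-correlation operator in $\C^{\#I}$, and then pass to the length via the identity $\textnormal{len}(S)=\textnormal{ess-sup}_{\w}\dim_S(\w)$ (the paper cites \cite[Theorem 3.5]{BDVR94} for this last step). The only quibble is that the direction of that identity you actually need --- that fibers of dimension at most $\#I$ a.e.\ yield a generating set of cardinality at most $\#I$ --- is the nontrivial one, not the elementary one, but it is covered by the cited theorem.
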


\begin{proof}
Since $I$ is finite, it holds that $\textnormal{range}(\G_{\Phi_{\g,\ti},\Psi}(\w)))\subset \C^{\#I}$ for a.e.
$\w\in \mathbb{T}^n$.
Hence, by Theorem \ref{stasis}, we have that
$$\dim_{\overline{S}_{\g,\ti}}(\w)=\dim(\text{range}(\G_{\Phi_{\g,\ti},\Psi}(\w)))\leq \#I\quad\mbox{ for a.e. }\w\in \mathbb{T}^n,\,\forall\,
\g,\ti\in\Gamma.$$
This shows that, given $\g,\ti\in\Gamma,$
$$\mbox{ess-sup }\{\dim_{\overline{S}_{\g,\ti}}(\w)\,:\,\w\in\mathbb{T}^n\}\leq\# I.$$
The proof of the proposition follows using \cite[Theorem 3.5]{BDVR94}.

\end{proof}

We would like to note that based in our results, it is possible to state conditions for the injectivity and stability for the sampling operator
in a union of SISs which are not necessarily finitely-generated. For this, condition iii) of Theorem \ref{teinysis}
should be replaced by condition ii).

\section{Necessary and sufficient conditions for the closedness of the sum of two SIS.}\label{section6}

In this section we review the conditions for the sum of two closed subspaces of a Hilbert space to
be closed. Then we apply the results to the class of FSISs in $L^2(\R^n)$.

\subsection{Closedness of the sum of two subspaces of a Hilbert space.}

Throughout this section $\mathcal{H}$ will be a separable Hilbert space.\\
If $S$ is a closed subspace of $\mathcal{H}$, we write $P_S$ for the orthogonal projection onto $S$.
Given $U$ and $V$ closed subspaces
of $\mathcal{H}$, we will use the symbol $P_U\big\vert_V$ to denote the restriction of $P_U$ to the subspace $V$.\\
The orthogonal complement of $U\cap V$ in $U$ will be denoted by
\begin{equation}\label{utilde}
U\ominus V:=U\cap(U\cap V)^{\perp}.
\end{equation}
If $A:\mathcal{H}\rightarrow \mathcal{H}$ is a bounded linear operator, we will use the norm
$$\|A\|=\sup_{x\neq 0}\frac{\|Ax\|}{\|x\|}.$$

The conditions on the closedness of the sum of two closed subspaces of $\mathcal{H}$ will be given in terms of the angle between the subspaces. We refer the reader to \cite{Deu95} for details and proofs.

\begin{definition}
Let $U$ and $V$ be closed subspaces of $\mathcal{H}$.
\begin{enumerate}
\item [a)] The  \textit{minimal angle}  between $U$ and $V$ (or \textit{Dixmier angle}) is the angle in $[0,\frac{\pi}{2}]$ whose cosine is $$\textbf{c}_0[U,V]:=\sup \{|\langle u,v\rangle|\,:\,u\in U, v\in V, \|u\|\leq 1, \|v\|\leq1\}.$$
\item [b)] The \textit{angle} between $U$ and $V$ (or \textit{Friedrichs angle}) is the angle in $[0,\frac{\pi}{2}]$ whose cosine is $$\textbf{c}[U,V]:=\sup\{|\langle u,v \rangle|\,:\,u\in
U\ominus V, v\in V\ominus U\text{ and }\|u\|\leq 1, \|v\|\leq 1\}.$$
\end{enumerate}
\end{definition}

Now we  state some known results concerning both notions of angles between subspaces.

\begin{proposition}\label{cerrados1}
Let $U$ and $V$ be closed subspaces of $\mathcal{H}$.
\begin{enumerate}
\item [i)]$\textbf{c}_0[U,V]=\|P_U\big\vert_V\|.$
\item[ii)]  $\textbf{c}[U,V]=\textbf{c}_0[U\ominus V,V\ominus U].$
\item [iii)] $U+V$ is closed if and only if $\textbf{c}[U,V]<1.$
\end{enumerate}
\end{proposition}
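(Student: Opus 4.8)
The plan is to handle the three parts in order, since each feeds the next, and to reduce part iii)---the only substantial statement---to the case of trivial intersection. For part i) I would compute $\|P_U\big\vert_V\|$ directly: for $v\in V$ one has $\|P_Uv\|=\sup\{|\langle P_Uv,u\rangle|:u\in U,\|u\|\le 1\}$, and since $P_Uu=u$ for $u\in U$ this inner product equals $\langle v,u\rangle$; taking the supremum over unit vectors $v\in V$ gives $\|P_U\big\vert_V\|=\sup\{|\langle u,v\rangle|:u\in U,\,v\in V,\,\|u\|\le1,\,\|v\|\le1\}=\textbf{c}_0[U,V]$. Part ii) is then immediate: comparing the definition of $\textbf{c}[U,V]$ with that of $\textbf{c}_0$ applied to the pair $(U\ominus V,\,V\ominus U)$ shows the two are literally the same supremum.

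For part iii) the first step is a reduction. Writing $M=U\cap V$, $U_0=U\ominus V$ and $V_0=V\ominus U$, I would use the orthogonal decompositions $U=M\oplus U_0$ and $V=M\oplus V_0$ to obtain $U+V=M\oplus(U_0+V_0)$, an orthogonal sum because $M\perp U_0$ and $M\perp V_0$. Since $M$ is closed and orthogonal to $U_0+V_0$, the sum $U+V$ is closed if and only if $U_0+V_0$ is closed. Moreover $U_0\cap V_0\subseteq M\cap M^{\perp}=\{0\}$, and by part ii) $\textbf{c}[U,V]=\textbf{c}_0[U_0,V_0]$. Thus it suffices to prove: for closed subspaces $A,B$ with $A\cap B=\{0\}$ and $c:=\textbf{c}_0[A,B]$, the sum $A+B$ is closed if and only if $c<1$.

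The engine for this final equivalence is the elementary inequality $\|a+b\|^2\ge(1-c)(\|a\|^2+\|b\|^2)$, valid for all $a\in A$, $b\in B$; indeed $\operatorname{Re}\langle a,b\rangle\ge -c\|a\|\,\|b\|$, so $\|a+b\|^2\ge(1-c)(\|a\|^2+\|b\|^2)+c(\|a\|-\|b\|)^2$. If $c<1$, this bound shows the summation map $(a,b)\mapsto a+b$ is bounded below, so from any convergent sequence $a_n+b_n$ one reads off that $(a_n)$ and $(b_n)$ are Cauchy; their limits lie in the closed spaces $A$ and $B$, proving $A+B$ closed. Conversely, if $A+B$ is closed then, using $A\cap B=\{0\}$, the bijection $(a,b)\mapsto a+b$ from $A\times B$ onto the Banach space $A+B$ has a bounded inverse by the bounded inverse theorem, giving $\|a\|^2+\|b\|^2\le C^2\|a+b\|^2$; applied to unit vectors $a,b$ with $-b$ in place of $b$, and after adjusting the phase of $a$ so that $\langle a,b\rangle$ is real and nonnegative, this forces $|\langle a,b\rangle|\le 1-1/C^2<1$, whence $c<1$.

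I expect the main obstacle to be exactly this converse direction of part iii), where closedness must be converted into a quantitative gap between the subspaces. The clean route is the bounded inverse (open mapping) theorem, and the only care needed is to invoke $A\cap B=\{0\}$ to guarantee injectivity of the summation map, and to handle phases so that $\operatorname{Re}\langle a,b\rangle$ can be upgraded to $|\langle a,b\rangle|$ in the definition of the angle.
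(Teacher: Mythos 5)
Your proof is correct, and it is worth noting at the outset that the paper itself does not prove this proposition: it is presented as a list of known facts with the reader referred to \cite{Deu95}, so there is no in-paper argument to compare against. Your write-up supplies the standard self-contained proof. Part i) is right (for $w=P_Uv\in U$ one has $\|w\|=\sup\{|\langle w,u\rangle|:u\in U,\ \|u\|\le1\}$ and $\langle P_Uv,u\rangle=\langle v,u\rangle$), part ii) really is a matter of comparing definitions, and part iii) correctly splits off $M=U\cap V$, uses the estimate $\|a+b\|^2\ge(1-c)\bigl(\|a\|^2+\|b\|^2\bigr)$ for one direction and the bounded inverse theorem for the other. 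Only two small points deserve an explicit line. First, in the reduction, the implication ``$U+V$ closed $\Rightarrow U_0+V_0$ closed'' is most cleanly justified by the identity $U_0+V_0=(U+V)\cap M^{\perp}$ (project an element of $(U+V)\cap M^{\perp}$ onto $M$ to see its $M$-component vanishes), which exhibits $U_0+V_0$ as an intersection of closed sets; the reverse implication is the elementary fact that the sum of two closed, mutually orthogonal subspaces is closed. Second, in the converse of part iii) one should set aside the degenerate case $A=\{0\}$ or $B=\{0\}$ (where $c=0$ trivially) so that the supremum over the unit balls in the definition of $\textbf{c}_0$ can be replaced by a supremum over unit vectors before applying the inequality $\|a\|^2+\|b\|^2\le C^2\|a-b\|^2$. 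Neither point affects the validity of your argument.
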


Let $\mathcal{H}$ and $\mathcal{K}$ be separable Hilbert spaces, and $T:\mathcal{H}\rightarrow \mathcal{K}$ a bounded linear operator with closed range. We denote by  $T^{\dagger}$,  the pseudo-inverse of $T$ (see \cite{Chr03} for definition and properties).

The following theorem provides a formula which gives an easy way of calculating the angle between two subspaces using operators associated to frames. Its proof follows from  \cite[Theorem 2.1]{KLL06} and part ii) of Proposition \ref{cerrados1}.

\begin{theorem}\label{teo-c}
Let $U$ and $V$ be closed subspaces of $\mathcal{H}$. Suppose that $X$ and $X'$ are countable subsets of $\mathcal{H}$ which are frames for $U\ominus V$ and $V\ominus U$ respectively. Then,
$$\textbf{c}[U,V]=\|(G_{X'}^{\dagger})^{\frac{1}{2}}G_{X,X'}(G_{X}^{\dagger})^{\frac{1}{2}}\|,$$
where $G_X$ and $G_{X'}$ are the Gramian operators and $G_{X,X'}$ is the cross-correlation operator.
\end{theorem}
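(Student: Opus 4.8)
```latex
The plan is to reduce Theorem \ref{teo-c} to the cited result \cite[Theorem 2.1]{KLL06}
together with part ii) of Proposition \ref{cerrados1}. The key observation is that the
Friedrichs angle $\textbf{c}[U,V]$ is, by definition, nothing but the Dixmier angle
$\textbf{c}_0[U\ominus V, V\ominus U]$, so it suffices to produce a frame-theoretic
formula for the minimal angle between two closed subspaces that happen to be given by
frames. First I would set $W:=U\ominus V$ and $W':=V\ominus U$, noting that these are
genuine closed subspaces of $\mathcal{H}$ on which $X$ and $X'$ are, respectively, frames.

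The heart of the argument rests on identifying $\textbf{c}_0[W,W']$ with the operator
norm $\|P_{W}\big\vert_{W'}\|$ via part i) of Proposition \ref{cerrados1}, and then
expressing that projection-restriction through the analysis and synthesis operators of
the two frames. The plan is to invoke \cite[Theorem 2.1]{KLL06} directly: since $X$ is a
frame for $W$, its Gramian $G_X$ has closed range and the operator
$(G_X^{\dagger})^{1/2}$ is well defined on $\ell^2$; the canonical tight frame associated
to $X$ is obtained by composing the synthesis operator with $(G_X^{\dagger})^{1/2}$, and
similarly for $X'$. The cited theorem of Kim--Lim--Lim furnishes exactly the formula
$$\textbf{c}_0[W,W']=\bignorm{(G_{X'}^{\dagger})^{1/2}\,G_{X,X'}\,(G_{X}^{\dagger})^{1/2}},$$
where $G_{X,X'}$ is the cross-correlation operator between the two frames. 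Combining this
with the identity $\textbf{c}[U,V]=\textbf{c}_0[W,W']$ from part ii) of
Proposition \ref{cerrados1} yields the claim.

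The main obstacle, and the point requiring the most care, is verifying that the
hypotheses of \cite[Theorem 2.1]{KLL06} are met, in particular that the Gramian operators
$G_X$ and $G_{X'}$ have closed range so that the pseudo-inverses $G_X^{\dagger}$ and
$G_{X'}^{\dagger}$ exist and their square roots are bounded. This is precisely where the
frame hypothesis is essential: because $X$ is a frame for $W$ (and not merely a Bessel or
complete sequence), Proposition \ref{fra} guarantees that $\sigma(G_X)\subset
\{0\}\cup[\alpha,\beta]$ with $\alpha>0$, so $G_X$ is bounded below on the orthogonal
complement of its kernel and therefore has closed range. This spectral gap is exactly what
legitimizes the functional calculus defining $(G_X^{\dagger})^{1/2}$. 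Once this closed-range
property is in hand, the rest is a direct citation, and no further computation beyond
matching the operators in the statement of \cite{KLL06} to the Gramian and
cross-correlation operators of $X$ and $X'$ is needed.
```
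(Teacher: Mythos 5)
Your proposal follows exactly the route the paper takes: the paper's entire proof is the one-line remark that the result "follows from \cite[Theorem 2.1]{KLL06} and part ii) of Proposition \ref{cerrados1}," i.e. reduce the Friedrichs angle to the Dixmier angle between $U\ominus V$ and $V\ominus U$ and cite Kim--Lim--Lim for the frame-theoretic formula. Your additional verification that the frame hypothesis forces $\sigma(G_X)\subset\{0\}\cup[\alpha,\beta]$ and hence closed range of the Gramian is a correct and welcome elaboration of a step the paper leaves implicit.
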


\subsection{Closedness of the sum of two shift-invariant subspaces}

As it was stated in Proposition \ref{cerrados1}, the closedness of the sum of two subspaces depends on the Friedrichs angle between them.
In this section, we  provide an expression  for the Friedrichs angle between  two SISs in terms of the gramians of the generators.
In \cite{KLL06} Kim et al has found a similar expression for the Dixmier angle between two SISs.

The main theorem of this part gives necessary and sufficient conditions for the sum of two SISs to be closed. We first state the theorem and then we apply this result to obtain a more general version of Corollary \ref{cor-iny-sis}. The proof of the theorem will be given at the end of the section.

\begin{theorem}\label{teofinal}
Let $U$ and $V$ be SISs of $L^2(\R^n)$. Suppose that $\Phi, \Phi'$ are sets of functions in $L^2(\R^n)$ such that for a.e. $\w\in\mathbb{T}^n$, $\tau\Phi(\w)$ and $\tau\Phi' (\w)$ are frames for $J_{U\ominus V}(\w)$ and
 $J_{V\ominus U}(\w)$ respectively.
Then, $U+V$ is closed if and only if
\begin{equation}\label{ec10}
\textbf{c}[U,V]=\textnormal{ess-sup } \{\|(\G_{\Phi'}(\w)^{\dagger})^{\frac{1}{2}}\G_{\Phi,\Phi' }(\w)(\G_{\Phi}(\w)^{\dagger})^{\frac{1}{2}}\|\,:\, \w\in\mathbb{T}^n\}<1.
\end{equation}
\end{theorem}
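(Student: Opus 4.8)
The plan is to reduce the statement about $U+V$ being closed to the Friedrichs angle condition $\textbf{c}[U,V]<1$ via part iii) of Proposition \ref{cerrados1}, and then to establish the explicit fiberwise formula \eqref{ec10} for $\textbf{c}[U,V]$. The first reduction is immediate, so the entire content is the angle formula. First I would invoke Theorem \ref{teo-c} in the ambient Hilbert space $L^2(\R^n)$: taking $X=E(\Phi)$ and $X'=E(\Phi')$ as frames for $U\ominus V$ and $V\ominus U$, it gives $\textbf{c}[U,V]=\|(G_{X'}^{\dagger})^{\frac12}G_{X,X'}(G_{X}^{\dagger})^{\frac12}\|$. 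The task then becomes to translate this operator norm, computed on $\ell^2$ sequence spaces indexed by $\Z^n\times(\text{generators})$, into the essential supremum over $\w\in\mathbb{T}^n$ of the fiberwise operator norms appearing on the right of \eqref{ec10}. I would first need to verify the hypotheses of Theorem \ref{teo-c} are met: namely that $E(\Phi)$ and $E(\Phi')$ are genuine frames for $U\ominus V$ and $V\ominus U$, which follows from part i) of Theorem \ref{grami3} since $\tau\Phi(\w)$ and $\tau\Phi'(\w)$ are assumed to be fiber frames a.e.

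The heart of the argument is a fiberization of each operator in the expression. The key observation is that $\tau$ is an isometric isomorphism intertwining the integer-translation structure with the fiber decomposition, so that the Gramian $G_{E(\Phi)}$, the cross-correlation $G_{E(\Phi),E(\Phi')}$, and their pseudo-inverses all act fiberwise: for a.e. $\w$ they reduce to $\G_{\Phi}(\w)$, $\G_{\Phi,\Phi'}(\w)$, and the corresponding fiber pseudo-inverses on the finite-dimensional fiber spaces. I would make this precise by exhibiting a unitary equivalence between $\ell^2$ of the translate-indexed sequences and a direct-integral space $\int^{\oplus}_{\mathbb{T}^n}(\cdots)\,d\w$, under which each of these operators becomes a decomposable (multiplication) operator with fiber the corresponding operator at $\w$. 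The observations already recorded in the excerpt, that $\G_{\Phi}(\w)=G_{\tau\Phi(\w)}$ and $\G_{\Phi,\Phi'}(\w)=G_{\tau\Phi(\w),\tau\Phi'(\w)}$, are exactly the fiber identifications needed. The composite operator $(G_{E(\Phi')}^{\dagger})^{1/2}G_{E(\Phi),E(\Phi')}(G_{E(\Phi)}^{\dagger})^{1/2}$ then decomposes fiberwise into $(\G_{\Phi'}(\w)^{\dagger})^{1/2}\G_{\Phi,\Phi'}(\w)(\G_{\Phi}(\w)^{\dagger})^{1/2}$, and the norm of a decomposable operator equals the essential supremum of the fiber norms, yielding \eqref{ec10}.

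I expect the main obstacle to be the commutation of the pseudo-inverse and the square root with the direct-integral decomposition. For ordinary bounded operators, decomposability is preserved under composition and under continuous functional calculus, but the pseudo-inverse is \emph{not} continuous in general, and one must ensure that $\w\mapsto(\G_{\Phi}(\w)^{\dagger})^{1/2}$ is measurable and that $(G_{E(\Phi)}^{\dagger})^{1/2}$ really does decompose as the fiber pseudo-inverses rather than something else. The safe route is to use Proposition \ref{fra}: since $E(\Phi)$ is a frame sequence, $G_{E(\Phi)}$ has spectrum in $\{0\}\cup[\alpha,\beta]$, so its restriction to the orthogonal complement of its kernel is boundedly invertible with spectrum in $[\alpha,\beta]$, and on this range the square-root of the pseudo-inverse is given by a continuous function of $G_{E(\Phi)}$ bounded away from the spectral gap. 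The same spectral gap holds fiberwise for a.e. $\w$ by Theorem \ref{grami3} with uniform constants, so the functional calculus applies simultaneously on each fiber, and decomposability is preserved. Once this measurability-and-uniform-gap point is handled, the rest is the standard fact that the norm of a decomposable operator is the essential supremum of the fiber norms, which together with the reduction $\textbf{c}[U,V]<1\iff U+V$ closed completes the proof.
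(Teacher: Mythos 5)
Your proposal has a genuine gap at the very first step of the angle computation. You apply Theorem \ref{teo-c} in the ambient space $L^2(\R^n)$ with $X=E(\Phi)$ and $X'=E(\Phi')$, and you justify the hypothesis that these are frames for $U\ominus V$ and $V\ominus U$ by citing part i) of Theorem \ref{grami3}. But that theorem converts fiberwise frames into a global frame only when the fiber frame bounds are \emph{uniform} in $\w$, whereas the hypothesis of Theorem \ref{teofinal} asserts only that $\tau\Phi(\w)$ and $\tau\Phi'(\w)$ are frames for the fiber spaces for a.e.\ $\w$, with constants that may degenerate as $\w$ varies. In that situation $E(\Phi)$ need not be a frame for $U\ominus V$ --- it need not even be a Bessel sequence (Theorem \ref{grami5} likewise requires a uniform constant) --- so the global Gramian $G_{E(\Phi)}$ may fail to be bounded, its pseudo-inverse may fail to exist, and Theorem \ref{teo-c} cannot be invoked in the ambient space at all. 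The paper flags precisely this point in the remark immediately following the theorem: for an FSIS $S=S(\Phi)$ one always has that $\tau\Phi(\w)$ is a frame for $J_S(\w)$ a.e.\ even though $E(\Phi)$ need not be a frame for $S$; the whole value of stating the criterion this way is that it can be checked on an arbitrary set of generators.

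The paper's proof avoids the problem by reversing the order of your two steps: it applies Theorem \ref{teo-c} \emph{fiberwise}, in $\ell^2(\Z^n)$ for each fixed $\w$, where $\tau\Phi(\w)$ and $\tau\Phi'(\w)$ genuinely are frames for $J_U(\w)\ominus J_V(\w)$ and $J_V(\w)\ominus J_U(\w)$ by Lemma \ref{Js}, obtaining $\mathbf{c}[J_U(\w),J_V(\w)]=\|(\G_{\Phi'}(\w)^{\dagger})^{\frac{1}{2}}\G_{\Phi,\Phi'}(\w)(\G_{\Phi}(\w)^{\dagger})^{\frac{1}{2}}\|$ for a.e.\ $\w$, and then passes to the global angle via Lemma \ref{lem-ess-sup}, which states that $\mathbf{c}[U,V]=\textnormal{ess-sup}\,\{\mathbf{c}[J_U(\w),J_V(\w)]:\w\in\mathbb{T}^n\}$. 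That lemma is proved using only the fact that $P_U\big\vert_V$ is a shift-preserving operator whose range operator is $P_{J_U(\w)}\big\vert_{J_V(\w)}$, so no frame bounds enter anywhere. Your direct-integral decomposition of the pseudo-inverses could perhaps be salvaged by first replacing $\Phi,\Phi'$ with generators whose translates form Parseval frames (Theorem \ref{RS}) and then showing separately that the fiberwise norm expression is independent of the choice of fiber frames, but as written the argument does not go through.
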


 Note that, if $S=S(\Phi)$ is an FSIS, we have that $\tau\Phi(\w)$ is a frame for $J_S(\w)$  for a.e. $\w\in\mathbb{T}^n$, even though $E(\Phi)$ is not a frame for $S$. Thus, if $U$ and $V$ are FSISs,  condition (\ref{ec10}) can
 be checked on any set of generators of the subspaces $U\ominus V$ and $V\ominus U$. At the end of the section we give an example in which we compute the Friedrichs angle between two FSISs.

In the next theorem we show that imposing certain restrictions on the angle between the subspaces, we obtain
necessary and sufficient conditions for the injectivity of the sampling operator. This gives a more complete version of Corollary \ref{cor-iny-sis} .

\begin{theorem}
Let $\Psi=\{\psi_i\}_{i\in I}$ be such that $E(\Psi)$ is a Bessel sequence in $L^2(\R^n)$ and suppose condition (\ref{ec10}) is satisfied for every $\g,\ti\in\Gamma$. If $\Phi_{\g,\ti}$ is a finite set of generators  for $S_{\g,\ti}$,
the following are equivalent:
\begin{enumerate}
\item [i)]$\Psi$ provides a one-to-one sampling operator for $\chi.$
\item [ii)]$\dim(\textnormal{range}(\G_{\Phi_{\g,\ti},\Psi}(\w)))=\dim_{S_{\g,\ti}}(\w)$ for a.e. $\w \in \mathbb{T}^n, \forall\,\g,\ti\in\Gamma.$
\end{enumerate}
\end{theorem}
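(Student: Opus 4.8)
The plan is to reduce the equivalence for $\chi$ to a statement about the closed sums $\overline{S}_{\g,\ti}$ and then invoke the one-to-one criterion already established fiber-wise. The key observation is that condition (\ref{ec10}), by Theorem \ref{teofinal}, guarantees that $S_{\g,\ti}=S_\g+S_\ti$ is already closed, so that $S_{\g,\ti}=\overline{S}_{\g,\ti}$ for every $\g,\ti\in\Gamma$. This is precisely what removes the gap noted in the remark after Corollary \ref{cor-iny-sis}: there the injectivity of $A$ on $S_{\g,\ti}$ could not be concluded from injectivity on the closure, but now the two subspaces coincide.

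First I would fix $\g,\ti\in\Gamma$ and record that, since (\ref{ec10}) holds, Theorem \ref{teofinal} yields $\textbf{c}[S_\g,S_\ti]<1$, whence by part iii) of Proposition \ref{cerrados1} the sum $S_{\g,\ti}$ is closed. Therefore $S_{\g,\ti}$ is itself an FSIS (it equals $\overline{S}_{\g,\ti}$, which is an FSIS by Proposition \ref{clau}), and the finite generating set $\Phi_{\g,\ti}$ generates it. Next I would apply Theorem \ref{teinysis} directly to the closed FSIS $S=S_{\g,\ti}$ with generators $\Phi_{\g,\ti}$: the equivalence of items i) and iii) there gives that $A$ is one-to-one on $S_{\g,\ti}$ if and only if $\dim(\textnormal{range}(\G_{\Phi_{\g,\ti},\Psi}(\w)))=\dim_{S_{\g,\ti}}(\w)$ for a.e.\ $\w\in\mathbb{T}^n$.

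Finally I would assemble the union statement using Proposition \ref{proposition-dl}. Part i) of that proposition says $A$ is one-to-one on $\chi$ if and only if $A$ is one-to-one on every $S_{\g,\ti}$, $\g,\ti\in\Gamma$. Combining this with the per-pair equivalence just obtained gives the desired equivalence: $\Psi$ provides a one-to-one sampling operator for $\chi$ if and only if $\dim(\textnormal{range}(\G_{\Phi_{\g,\ti},\Psi}(\w)))=\dim_{S_{\g,\ti}}(\w)$ for a.e.\ $\w\in\mathbb{T}^n$ and all $\g,\ti\in\Gamma$.

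The main obstacle, and the only nontrivial point, is justifying that closedness of $S_{\g,\ti}$ lets us legitimately pass from the fiber-wise condition to injectivity on the actual sum rather than merely on its closure; everything else is a direct chaining of previously proved results. I would emphasize in the write-up that the hypothesis (\ref{ec10}) is exactly the extra ingredient that upgrades the sufficient condition of Corollary \ref{cor-iny-sis} to a necessary and sufficient one, since Theorem \ref{teinysis} requires a \emph{closed} FSIS and (\ref{ec10}) supplies that closedness through Theorem \ref{teofinal}.
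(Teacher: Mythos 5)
Your proposal is correct and follows essentially the same route as the paper: condition (\ref{ec10}) together with Theorem \ref{teofinal} makes each $S_{\g,\ti}$ a closed FSIS, after which Theorem \ref{teinysis} applies to each pair and Proposition \ref{proposition-dl} assembles the union statement. The paper's own proof is just a terser version of exactly this argument.
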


\begin{proof}
Since condition (\ref{ec10}) is satisfied for every $\g,\ti\in\Gamma$, it holds that the subspaces $S_{\g,\ti}$
are FSISs. The proof of the theorem follows applying Theorem \ref{teinysis} to these subspaces.

\end{proof}

In what follows we will give some lemmas which will be needed for the proof of Theorem \ref{teofinal}.
The results in these lemmas  are interesting by themselves.

The first lemma uses the notion of range function introduced in Definition \ref{def-range}.

\begin{lemma}\label{lema-rango-int}
Given $U$ and $V$ SISs  of $L^2(\R^n)$. Then the range function
$$
R:\mathbb{T}^n\rightarrow\{\text{closed subspaces of } \ell^2(\Z^n)\},\quad R(\w)=J_U(\w)\cap J_V(\w),
$$ is measurable.
\end{lemma}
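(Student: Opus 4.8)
The plan is to reduce the measurability of $R$ to the classical von Neumann alternating projection theorem, using that products and pointwise limits of measurable operator-valued functions remain measurable. By Proposition \ref{fun} the range functions $J_U$ and $J_V$ are measurable, so by the equivalence recorded after Definition \ref{def-range}, the maps $\w\mapsto P_{J_U(\w)}(a)$ and $\w\mapsto P_{J_V(\w)}(a)$ are vector measurable for every $a\in\ell^2(\Z^n)$.

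First I would establish the auxiliary fact that if $\w\mapsto S(\w)$ and $\w\mapsto T(\w)$ are projection-valued (hence bounded in norm by $1$) and weakly measurable, then $\w\mapsto S(\w)T(\w)(a)$ is vector measurable for each fixed $a$. Fix an orthonormal basis $\{e_k\}_{k}$ of $\ell^2(\Z^n)$. Writing $g(\w):=T(\w)a=\sum_k c_k(\w)\,e_k$ with $c_k(\w)=\langle T(\w)a,e_k\rangle$ measurable, the boundedness and linearity of $S(\w)$ give, for each $b\in\ell^2(\Z^n)$,
$$\langle S(\w)T(\w)a,\,b\rangle=\sum_k c_k(\w)\,\langle S(\w)e_k,\,b\rangle.$$
Each summand is a product of measurable scalar functions and the series converges pointwise in $\w$, so $\w\mapsto\langle S(\w)T(\w)a,b\rangle$ is measurable for every $b$, which is the desired vector measurability. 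Applying this with $S=P_{J_U}$, $T=P_{J_V}$ and iterating, I obtain by induction that $\w\mapsto\bigl(P_{J_U(\w)}P_{J_V(\w)}\bigr)^n(a)$ is vector measurable for every $n\in\N$ and every $a$.

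Next I would invoke von Neumann's theorem on alternating projections (see \cite{Deu95}): for any two closed subspaces $M,N$ of a Hilbert space one has $\bigl(P_M P_N\bigr)^n x\to P_{M\cap N}x$ in norm as $n\to\infty$, for every $x$. Applying this fiberwise with $M=J_U(\w)$ and $N=J_V(\w)$ yields
$$\bigl(P_{J_U(\w)}P_{J_V(\w)}\bigr)^n(a)\ \longrightarrow\ P_{J_U(\w)\cap J_V(\w)}(a)=P_{R(\w)}(a)$$
for every $\w$ as $n\to\infty$. Thus $\w\mapsto P_{R(\w)}(a)$ is a pointwise limit of vector measurable functions, hence vector measurable for each fixed $a$; by the equivalence recorded after Definition \ref{def-range}, $R$ is a measurable range function.

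The main obstacle is the auxiliary measurability of the composition in the first step: weak measurability is phrased in terms of a fixed argument vector, whereas here the projection $S(\w)$ is applied to the $\w$-dependent vector $T(\w)a$. The orthonormal-basis expansion above resolves this, the interchange of $S(\w)$ with the infinite sum being justified by the uniform bound $\|S(\w)\|\le 1$ together with the $\ell^2$-convergence of $\sum_k c_k(\w)e_k$. Everything else is a direct appeal to Proposition \ref{fun} and to the classical convergence of alternating projections.
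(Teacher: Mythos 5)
Your proof is correct and follows essentially the same route as the paper: both reduce the measurability of $R$ to von Neumann's alternating projection formula $P_{M\cap N}x=\lim_n (P_MP_N)^n x$ and then check that products and pointwise limits of weakly measurable projection-valued maps stay measurable. The only difference is cosmetic: where the paper invokes the equivalence in Definition \ref{def-range} (measurability of $J$ is equivalent to vector measurability of $\w\mapsto P_{J(\w)}(F(\w))$ for measurable $F$) to handle the composition, you prove that step directly via an orthonormal-basis expansion, which is a valid, self-contained substitute.
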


\begin{proof}\label{proy-int-med}
Recall that the measurability of $R$ is equivalent to $\w\mapsto P_{J_U(\w)\cap J_V(\w)}$ being measurable.

It is known (see \cite{Neu50}) that given $M$ and $N$ closed subspaces of a separable Hilbert space $\mathcal{H}$, for each $x\in\mathcal{H}$,
$$
P_{M\cap N}(x)=\lim_{n\to +\infty}(P_MP_N)^n(x).
$$
Note that if we have two measurable functions
 $$Q_1,Q_2: \mathbb{T}^n \rightarrow \{\text{orthogonal projections in } \ell^2(\Z^n)\},$$
 then the map $ \w \mapsto Q_1(\w)Q_2(\w)$ is measurable.
 For, let $F$ be an arbitrary measurable function from $\mathbb{T}^n$ into $\ell^2(\Z^n)$.
 Then $$ Q_1(\w)Q_2(\w)(F(\w)) = Q_1(\w)(Q_2(\w)(F(\w))).$$
 By Definition \ref{def-range}, the measurability of $Q_2(\w)$ implies the vector measurability of $Q_2(\w)(F(\w)).$
Since $Q_1(\w)$ is measurable, $ Q_1(\w)Q_2(\w)(F(\w))$ is measurable. What shows that
$ \w \mapsto Q_1(\w)Q_2(\w)$ is measurable.

As a consequence, it holds that for any $n \in \N$ the map $\w\mapsto (P_{J_U(\w)}P_{J_V(\w)})^n$ is measurable, that is, for each $a\in\ell^2(\Z^n)$, $\w\mapsto (P_{J_U(\w)}P_{J_V(\w)})^n(a)$ is measurable.
From here the proof follows using that,
$$
P_{J_U(\w)\cap J_V(\w)}(a)=\lim_{n\to +\infty}(P_{J_U(\w)}P_{J_V(\w)})^n(a).
$$
\end{proof}

With the previous lemma we obtain the following property of the fiber spaces.

\begin{lemma}\label{Js}
Let $U$ and $V$ be SISs of $L^2(\R^n)$. Then, $$J_{U\ominus V}(\omega)=J_U(\omega)\ominus J_V(\omega)\quad\text{ for a.e. }\w\in \mathbb{T}^n.$$
\end{lemma}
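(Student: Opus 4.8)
The plan is to reduce the statement to the corresponding identity for intersections, namely $J_{U\cap V}(\w)=J_U(\w)\cap J_V(\w)$ for a.e.\ $\w$, and then to transport the orthogonal decomposition $U=(U\cap V)\oplus(U\ominus V)$ down to the fibers. Note first that $U\cap V$ is a SIS (it is closed and shift-invariant), and hence so is $U\ominus V=U\cap(U\cap V)^{\perp}$, since the orthogonal complement of a SIS is again a SIS.

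First I would establish that $J_{U\cap V}(\w)=J_U(\w)\cap J_V(\w)$ for a.e.\ $\w\in\mathbb{T}^n$. By Lemma \ref{lema-rango-int} the range function $R(\w):=J_U(\w)\cap J_V(\w)$ is measurable, so by the one-to-one correspondence in Proposition \ref{fun} it is the range function of a unique SIS $W$, characterized by the property that $f\in W$ if and only if $\tau f(\w)\in R(\w)$ a.e. A short double inclusion identifies $W$ with $U\cap V$: if $\tau f(\w)\in J_U(\w)\cap J_V(\w)$ a.e., then $f\in U$ and $f\in V$ by Proposition \ref{fun}, and conversely any $f\in U\cap V$ satisfies $\tau f(\w)\in J_U(\w)$ and $\tau f(\w)\in J_V(\w)$ a.e. Uniqueness of the range function then yields the claimed identity. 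I expect this to be the main obstacle, because it is exactly here that measurability is indispensable: without Lemma \ref{lema-rango-int} one could not invoke the range-function correspondence to realize $R$ as the fiber function of a genuine SIS.

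Next I would prove a fiber-level orthogonality fact: if $S_1$ and $S_2$ are SISs with $S_1\perp S_2$, then $J_{S_1}(\w)\perp J_{S_2}(\w)$ for a.e.\ $\w$. For generators $\phi\in S_1$ and $\psi\in S_2$ one computes, using $\tau(t_k\psi)(\w)=e^{-2\pi ik\w}\tau\psi(\w)$, that $\langle \phi,t_k\psi\rangle=\int_{\mathbb{T}^n}e^{2\pi ik\w}\langle\tau\phi(\w),\tau\psi(\w)\rangle_{\ell^2}\,d\w$, so $\langle \phi,t_k\psi\rangle$ is the $k$-th Fourier coefficient of $\w\mapsto\langle\tau\phi(\w),\tau\psi(\w)\rangle$. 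Since all these inner products vanish (as $t_k\psi\in S_2$), the function vanishes a.e. Choosing countable generating sets for $S_1$ and $S_2$ and discarding a countable union of null sets, continuity of the inner product together with the generator description in Proposition \ref{fun} gives $J_{S_1}(\w)\perp J_{S_2}(\w)$ a.e.

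Finally I would combine these. Since $U=(U\cap V)\oplus(U\ominus V)$ is a decomposition of the SIS $U$ into two mutually orthogonal SISs, taking the union of generating sets and applying Proposition \ref{clau} together with the generator description of fibers in Proposition \ref{fun} gives $J_U(\w)=\overline{J_{U\cap V}(\w)+J_{U\ominus V}(\w)}$. The orthogonality step yields $J_{U\cap V}(\w)\perp J_{U\ominus V}(\w)$ a.e., and because an orthogonal sum of two closed subspaces is automatically closed, this is in fact an orthogonal direct sum $J_U(\w)=J_{U\cap V}(\w)\oplus J_{U\ominus V}(\w)$. Consequently $J_{U\ominus V}(\w)=J_U(\w)\cap\bigl(J_{U\cap V}(\w)\bigr)^{\perp}$, and substituting the first step $J_{U\cap V}(\w)=J_U(\w)\cap J_V(\w)$ gives $J_{U\ominus V}(\w)=J_U(\w)\cap\bigl(J_U(\w)\cap J_V(\w)\bigr)^{\perp}=J_U(\w)\ominus J_V(\w)$ for a.e.\ $\w$, which is the desired conclusion.
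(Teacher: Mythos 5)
Your proof is correct, and its first half coincides with the paper's: both arguments hinge on Lemma \ref{lema-rango-int} to realize $\w\mapsto J_U(\w)\cap J_V(\w)$ as a measurable range function and then invoke the uniqueness in Proposition \ref{fun} to identify it with $J_{U\cap V}(\w)$. Where you diverge is in passing from intersections to relative orthogonal complements. The paper finishes in one line by citing the general identity $(J_S(\w))^{\perp}=J_{S^{\perp}}(\w)$ for any SIS $S$, applied to $S=U\cap V$, so that $J_{U\ominus V}(\w)=J_U(\w)\cap J_{(U\cap V)^{\perp}}(\w)=J_U(\w)\cap\bigl(J_U(\w)\cap J_V(\w)\bigr)^{\perp}$. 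You instead prove from scratch, via the Fourier-coefficient computation $\langle\phi,t_k\psi\rangle=\int_{\mathbb{T}^n}e^{2\pi ik\w}\langle\tau\phi(\w),\tau\psi(\w)\rangle\,d\w$, that orthogonal SISs have orthogonal fibers almost everywhere, and then transport the decomposition $U=(U\cap V)\oplus(U\ominus V)$ to the fibers using Proposition \ref{clau} and the generator description of $J_U(\w)$. Your route is longer but more self-contained: it does not presuppose the complement identity for range functions (which the paper states without proof), and the fiberwise-orthogonality lemma you establish is of independent use. The paper's route is shorter and slightly more general in spirit, since the complement identity handles $S^{\perp}$ for an arbitrary SIS in one stroke rather than only within the ambient space $U$. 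Both arguments are sound; the one detail worth making explicit in yours is that the function $\w\mapsto\langle\tau\phi(\w),\tau\psi(\w)\rangle$ lies in $L^1(\mathbb{T}^n)$ (by Cauchy--Schwarz, since $\tau\phi,\tau\psi\in L^2(\mathbb{T}^n,\ell^2(\Z^n))$), so that the vanishing of all its Fourier coefficients indeed forces it to vanish almost everywhere.
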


\begin{proof}
We will first prove that
\begin{equation}\label{J-int}
J_{U\cap V}(\omega)=J_U(\omega)\cap J_V(\omega)\quad\text{ for a.e. }\w\in \mathbb{T}^n.
\end{equation}
Let $R$ be the measurable range function defined in Lemma \ref{lema-rango-int}.
Since
$$
U\cap V=\{f\in L^2(\R^n)\,:\,\tau f(\w)\in R(\w)\text{ for a.e. }\w \in\mathbb{T}^n\},
$$
it follows that $R$ is the range function associated to the shift-invariant space $U\cap V$, thus
(\ref{J-int}) holds.

Using (\ref{J-int}), the proof of the proposition is straightforward as
$$(J_S(\omega))^{\perp}=J_{S^{\perp}}(\omega)\quad\text{for a.e. }\w\in \mathbb{T}^n,$$ for any shift-invariant space $S$ of $L^2(\R^n)$.
\end{proof}

The next lemma follows the ideas from \cite{BG04}. It states that the angle between two shift-invariant spaces is the essential supremum of the angles between the fiber spaces.

\begin{lemma}\label{lem-ess-sup}
Let $U$ and $V$ be SISs of $L^2(\R^n)$. Then,
$$\textbf{c}[U,V]=\textnormal{ess-sup }\{\textbf{c}[J_U(\w),J_V(\w)]\,:\,\w\in\mathbb{T}^n\}.$$
\end{lemma}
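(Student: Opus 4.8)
The plan is to first reduce the assertion about the Friedrichs angle to the corresponding statement for the Dixmier angle, and then to recognize $P_M\big\vert_N$ as a fiberwise (decomposable) operator whose norm is the essential supremum of its fiber norms.

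First I would pass to the Dixmier angle. By Proposition \ref{cerrados1} ii), $\textbf{c}[U,V]=\textbf{c}_0[U\ominus V,V\ominus U]$ and likewise $\textbf{c}[J_U(\w),J_V(\w)]=\textbf{c}_0[J_U(\w)\ominus J_V(\w),J_V(\w)\ominus J_U(\w)]$. Since $U\cap V$ is trivially shift-invariant and closed, and orthogonal complements of SISs are again SISs, the spaces $M:=U\ominus V$ and $N:=V\ominus U$ are SISs. Applying Lemma \ref{Js} to the pairs $(U,V)$ and $(V,U)$ gives $J_M(\w)=J_U(\w)\ominus J_V(\w)$ and $J_N(\w)=J_V(\w)\ominus J_U(\w)$ for a.e.\ $\w$. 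Hence the lemma is equivalent to the Dixmier-angle identity $\textbf{c}_0[M,N]=\textnormal{ess-sup}\{\textbf{c}_0[J_M(\w),J_N(\w)]:\w\in\mathbb{T}^n\}$. Writing $\mu$ for the right-hand side and using Proposition \ref{cerrados1} i) to rewrite $\textbf{c}_0[M,N]=\|P_M\big\vert_N\|$ and $\textbf{c}_0[J_M(\w),J_N(\w)]=\|P_{J_M(\w)}\big\vert_{J_N(\w)}\|$, it suffices to prove $\|P_M\big\vert_N\|=\mu$.

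For the inequality $\|P_M\big\vert_N\|\le\mu$ I would fiberize $P_M$. Let $v\in N$, so $\tau v(\w)\in J_N(\w)$ for a.e.\ $\w$. By Proposition \ref{proy}, $\tau(P_Mv)(\w)=P_{J_M(\w)}(\tau v(\w))$, and since $\tau$ is an isometry,
$$\|P_Mv\|^2=\int_{\mathbb{T}^n}\|P_{J_M(\w)}(\tau v(\w))\|^2\,d\w\le\int_{\mathbb{T}^n}\|P_{J_M(\w)}\big\vert_{J_N(\w)}\|^2\,\|\tau v(\w)\|^2\,d\w\le\mu^2\,\|v\|^2,$$
using $\tau v(\w)\in J_N(\w)$ in the first inequality and $\|v\|^2=\int_{\mathbb{T}^n}\|\tau v(\w)\|^2\,d\w$ at the end. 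This yields $\|P_M\big\vert_N\|\le\mu$.

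The reverse inequality is where the real work lies, and I expect the measurable selection of near-extremal fiber vectors to be the main obstacle. Fix $\e\in(0,\mu)$ (the case $\mu=0$ being trivial); by definition of the essential supremum the set $\Omega_\e:=\{\w:\|P_{J_M(\w)}P_{J_N(\w)}\|>\mu-\e\}$ has positive (finite) measure, and on it $\|P_{J_M(\w)}\big\vert_{J_N(\w)}\|=\|P_{J_M(\w)}P_{J_N(\w)}\|>\mu-\e$. Fixing a countable dense sequence $\{e_l\}$ in the unit ball of $\ell^2(\Z^n)$ and using that $\w\mapsto P_{J_M(\w)}P_{J_N(\w)}$ is a measurable operator field (by the product argument in the proof of Lemma \ref{lema-rango-int}), each $\w\mapsto\|P_{J_M(\w)}P_{J_N(\w)}e_l\|$ is measurable; letting $l(\w)$ be the least index $l$ with $\|P_{J_M(\w)}P_{J_N(\w)}e_l\|>\mu-\e$ produces a measurable selector on $\Omega_\e$. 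Then $b(\w):=P_{J_N(\w)}e_{l(\w)}\in J_N(\w)$ is measurable and nonzero, since $\|b(\w)\|\ge\|P_{J_M(\w)}b(\w)\|>\mu-\e$, and its normalization $a(\w):=b(\w)/\|b(\w)\|$ is a measurable unit section of $J_N$ satisfying $\|P_{J_M(\w)}a(\w)\|\ge\|P_{J_M(\w)}b(\w)\|>\mu-\e$ on $\Omega_\e$ (using $\|b(\w)\|\le 1$). Defining $v$ through its fibers by $\tau v(\w):=|\Omega_\e|^{-1/2}\,\mathbf{1}_{\Omega_\e}(\w)\,a(\w)$ gives a unit vector of $N$, and
$$\|P_Mv\|^2=\frac{1}{|\Omega_\e|}\int_{\Omega_\e}\|P_{J_M(\w)}a(\w)\|^2\,d\w\ge(\mu-\e)^2.$$
Letting $\e\to0$ gives $\|P_M\big\vert_N\|\ge\mu$, which together with the previous paragraph completes the proof.
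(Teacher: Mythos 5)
Your proof is correct, and it shares its skeleton with the paper's: both reduce the Friedrichs angle to the Dixmier angle of $M:=U\ominus V$ and $N:=V\ominus U$ via Proposition \ref{cerrados1}, identify the fibers of these spaces through Lemma \ref{Js}, and then reduce everything to the identity $\|P_M\big\vert_N\|=\text{ess-sup}_{\w}\|P_{J_M(\w)}\big\vert_{J_N(\w)}\|$. The difference is in how that last identity is obtained. The paper observes that $P_{J_U(\w)}\big\vert_{J_V(\w)}$ is the range operator of the shift-preserving operator $P_U\big\vert_V$ and simply cites the norm formula for shift-preserving operators from \cite{Bow00}; you instead prove the identity from scratch, with the upper bound by a direct fiberwise estimate and the lower bound by a measurable selection of near-extremal unit sections of $J_N$ over the positive-measure set $\Omega_\e$ where the fiber norm exceeds $\mu-\e$. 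Your selection argument is sound: the least-index selector $l(\w)$ is measurable, $b(\w)=P_{J_N(\w)}e_{l(\w)}$ is measurable and bounded below in norm by $\mu-\e$, and the normalization trick using $\|b(\w)\|\le 1$ correctly preserves the lower bound after rescaling. What your route buys is a self-contained argument that makes explicit the measurability issues (of $\w\mapsto\|P_{J_M(\w)}P_{J_N(\w)}\|$ and of the selector) which the citation conceals; what the paper's route buys is brevity and the placement of the lemma inside the general framework of shift-preserving operators, where the same norm formula holds for operators other than projections.
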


\begin{proof}
Given $f\in V$, by Proposition \ref{proy}, we have for a.e. $\w\in\mathbb{T}^n,$
$$\tau (P_U\big\vert_Vf)(\w)=\tau (P_UP_Vf)(\w)=P_{J_U(\w)}P_{J_V(\w)}(\tau f(\w))=P_{J_U(\w)}\big\vert_{J_V(\w)}(\tau f(\w)).$$
This shows that $P_{J_U(\w)}\big\vert_{J_V(\w)}$ is the range operator corresponding
to the shift-preserving operator $P_U\big\vert_{V}$ in the shift-invariant space $V$. What implies  that
\begin{equation}\label{norma-proy-fibras}
\|P_U\big\vert_V\|=\text{ess-sup }\{\|P_{J_{U}(\w)}\big\vert_{J_{V}(\w)}\|\,:\,\w\in\mathbb{T}^n\}
\end{equation}
(see \cite{Bow00} for the definition and properties of shift-preserving operators).

Using (\ref{norma-proy-fibras}), Proposition \ref{cerrados1} and Lemma \ref{Js}, we obtain
\begin{align*}
\textbf{c}[U,V]&=\textbf{c}_0[U\ominus V,V\ominus U]=\|P_{U\ominus V}\big\vert_{V\ominus U}\|\\
&=\text{ess-sup }\{\|P_{J_{U\ominus V}(\w)}\big\vert_{J_{V\ominus U}(\w)}\|\,:\,\w\in\mathbb{T}^n\}\\
&=\text{ess-sup }\{\|P_{J_{U}(\w)\ominus J_{V}(\w)}\big\vert_{J_{V}(\w)\ominus J_{U}(\w)}\|\,:\,\w\in\mathbb{T}^n\}\\
&=\text{ess-sup }\{\textbf{c}_0[J_{U}(\w)\ominus J_{V}(\w),J_{V}(\w)\ominus J_{U}(\w)]\,:\,\w\in\mathbb{T}^n\}\\
&=\text{ess-sup }\{\textbf{c}[J_{U}(\w),J_{V}(\w)]\,:\,\w\in\mathbb{T}^n\}.
\end{align*}
\end{proof}

With the above results, we are able to prove the main theorem of this section.

\begin{proof}[\textit{Proof of Theorem \ref{teofinal}}]
By Lemma \ref{Js}, it follows that  $\tau\Phi(\w)$ and
$\tau\Phi' (\w)$ are frames for $J_{U}(\w)\ominus J_{V}(\w)$ and $J_{V}(\w)\ominus J_{U}(\w)$ respectively,
for a.e. $\w\in\mathbb{T}^n$.

Thus, using Theorem \ref{teo-c}, we obtain
$$\mathbf{c}[J_{U}(\w),J_{V}(\w)]=\|(\G_{\Phi' }(\w)^{\dagger})^{\frac{1}{2}}\G_{\Phi,\Phi' }(\w)(\G_{\Phi}(\w)^{\dagger})^{\frac{1}{2}}\|\quad\text{for a.e. }\w\in\mathbb{T}^n.$$

Hence, by Lemma \ref{lem-ess-sup},
\begin{equation}\label{last-ec}
\textbf{c}[U,V]=\text{ess-sup }\{\|(\G_{\Phi' }(\w)^{\dagger})^{\frac{1}{2}}\G_{\Phi,\Phi' }(\w)(\G_{\Phi}(\w)^{\dagger})^{\frac{1}{2}}\|\,:\, \w\in\mathbb{T}^n\}.
\end{equation}

The proof of the theorem follows from (\ref{last-ec}) and Proposition \ref{cerrados1}.
\end{proof}

Next we provide an example of two shift-invariant spaces whose sum is not closed. In order to prove that,  we compute the Friedrichs angle  between the subspaces.

\begin{example}\label{ejclau}

Let $\varphi_1\in L^2(\R)$ be given by
$$\hat{\varphi_1}(\w)=
\begin{cases}
\cos(2\pi\w)   &\text{ if } 0\leq \w <1\\
\sin(2\pi\w) &\text{ if } 1\leq \w <2\\
0 &\text{otherwise},
\end{cases}$$
and $\varphi_2, \varphi_3\in L^2(\R)$ satisfying that $\hat{\varphi_2}(\w)=\chi_{[2,3)}(\w)$ and $\hat{\varphi_3}(\w)=\chi_{[3,4)}(\w)$.
Define $U=S(\varphi_1, \varphi_2, \varphi_3)$.\\
Consider now $\varphi_0,\varphi_4\in L^2(\R)$, such that $\hat{\varphi_0}(\w)=\chi_{[0,1)}(\w)$ and $\hat{\varphi_4}(\w)=\chi_{[\frac{5}{2}, \frac{7}{2})}(\w)$, set $V=S(\varphi_0, \varphi_4).$

We will prove that $U+V$ is not closed using Theorem \ref{teofinal}.

Let $\{e_k\}_{k\in\Z}$ be the standard basis for $\ell^2(\Z)$. Then,
 $ \tau\varphi_1(\w)=\cos(2\pi\w)e_0+\sin(2\pi\w)e_1$,  $ \tau\varphi_2(\w)=e_2$,  $ \tau\varphi_3(\w)=e_3$,
  $ \tau\varphi_0(\w)=e_0$,  $ \tau\varphi_4(\w)=e_3\chi_{[0,\frac{1}{2})}(\w)+e_2\chi_{[\frac{1}{2},1)}(\w)$. So, we have that for a.e. $\w\in[0,1)$,
$$J_U(\w)\ominus J_V(\w)=\text{span}\{ \tau\varphi_1(\w), \tau\varphi_5(\w)\}\quad\text{and}\quad J_V(\w)\ominus J_U(\w)=\text{span}\{ \tau\varphi_0(\w)\},$$ where $\hat{\varphi_5}(\w)=\chi_{[2,\frac{5}{2})}(\w)+\chi_{[\frac{7}{2},4)}(\w)$.
Thus, by Lemma \ref{Js}, it follows that $U\ominus V=S(\varphi_1, \varphi_5)$ and $V\ominus U=S(\varphi_0)$.

Let $\Phi:=\{\varphi_1,\varphi_5\}$ and $\Phi':=\{\varphi_0\}$, then
$$
\G_{\Phi'}(\w)=1\quad\G_{\Phi}(\w)=
\begin{pmatrix}
1& 0\\
0 & 1
\end{pmatrix}\quad\text{and}\quad \G_{\Phi,\Phi' }(\w)=(\cos(2\pi\w),  0).
$$

Therefore
\begin{align*}
\textbf{c}[U,V]&=\textnormal{ess-sup } \{\|(\G_{\Phi'}(\w)^{\dagger})^{\frac{1}{2}}\G_{\Phi,\Phi' }(\w)(\G_{\Phi}(\w)^{\dagger})^{\frac{1}{2}}\|\,:\, \w\in[0,1)\}\\
&=\textnormal{ess-sup } \{|\cos(2\pi\w)|\,:\,\w\in[0,1)\}=1.
\end{align*}
Hence, by Theorem \ref{teofinal}, $U+V$ is not closed.

\end{example}

\vspace{13pt}
\centerline{ACKNOWLEDGEMENT}
\vspace{13pt}

\noindent We  thank J.L. Romero  for pointing out reference \cite{Deu95} that originated the content of Section \ref{section6}. We also thank the referee for his comments and suggestions that helped to improve the presentation.

\thispagestyle{empty}

\end{document}